\documentclass[11pt,review,hidelinks,onefignum,onetabnum,letterpaper]{article}
\usepackage[a4paper,margin=2.5cm]{geometry}
\usepackage[left,pagewise,mathlines]{lineno} % números en el margen exterior izquierdo del documento

\usepackage{amsmath,amsthm,amssymb}

\usepackage{lipsum}
\usepackage{amsfonts}
\usepackage{graphicx}
\usepackage{epstopdf}
\usepackage{algorithmic}
\usepackage{cleveref}
\ifpdf
  \DeclareGraphicsExtensions{.eps,.pdf,.png,.jpg}
\else
  \DeclareGraphicsExtensions{.eps}
\fi

\usepackage{enumitem}
\usepackage[justification=centering]{caption}
\DeclareGraphicsExtensions{.bmp,.png,.pdf,.jpg,.eps}

\usepackage{amsmath,amssymb,latexsym,amsfonts}
\usepackage{xcolor}
\definecolor{dAquaMarine}{rgb}{0, 0.5, 0.55}%Dark Aquamarine dolo

\usepackage[english]{babel}

\usepackage{multirow}
\usepackage{booktabs}

\newtheorem{theorem}{Theorem}
\newtheorem{lemma}{Lemma}

\newtheorem{problem}{Problem}
\newtheorem{remark}{Remark}
\newtheorem{corollary}{Corollary}
\def\nuc{\nu_{{}_{\mathrm C}\!}}

\newcommand*{\defeq}{\mathrel{\vcenter{\baselineskip0.5ex \lineskiplimit0pt \hbox{\scriptsize.}\hbox{\scriptsize.}}}=}

\def\At{A_{\theta}}
\def\Tt{T}

\def\Sht{\hat{S}_\theta}

\def\bV{\mathbf{V}}

\def\drdz{\,dr\,dz}
\def\dl{\,dl}

\def\L{\mathrm{L}}
\def\H{\mathrm{H}}

\def\O{\Omega}
\def\OC{\O_{{}_{\mathrm C}\!}}
\def\Ocero{\O_{{}_{0}\!}}
\def\Occ{\bar{\O}_{{}_{0}\!}}
\def\OA{\O_{{\mathrm a}\!}}
\def\Ok{\O_{{}_{k}\!}}

\def\IOcero{\mathrm{I}_{{}_{\Ocero}\!}}
\def\IOk{\mathrm{I}_{{}_{\Ok}\!}}
\def\Itt{\mathrm{I}_{{}_{\vartheta}\!}}

\def\Ot{\breve{\O}}

\def\Htu{\widetilde{\H}_1^1(\O)}
\def\LdGur{{\L}_{1}^2(\widehat{\Gamma}_1)}
\def\HtuG{\widetilde{\H}_{1,\Gamma}^1(\O)}
\def\HuT{\H_{1}^1(\Ocero)}
\def\Huc{\H_{1}^1(\Ocero)}

\def\HtuV{\widetilde{\H}_1^1(\vartheta)}

\def\HurV{\H_1^1(\vartheta)}
\def\HkrV{\H_1^k(\vartheta)}

\def\Ldr{\L_1^2(\O)}
\def\LdrV{\L_1^2(\vartheta)}
\def\LdurV{\L_{-1}^2(\vartheta)}
\def\Ldur{\L_{-1}^2(\O)}

\def\Ldrc{\L_1^2(\Ocero)}

\def\LdrOC{\L_1^2(\OC)}

\def\Lducoil{\L^2_{-1}(\O_{\mathrm{coil}})}

\def\scoil{\sigma_{\mathrm{coil}}}

\newcommand{\norm}[2]{\|#1\|_{#2}}

\def\R{\mathbb{R}}
\def\N{\mathbb{N}}
\def\C{\mathbb{C}}
\def\curl{\mathop{\mathbf{curl}}\nolimits}
\def\div{\mathop{\mathrm{div}}\nolimits}
\def\grad{\mathop{\mathbf{grad}}\nolimits}

\def\Ccal{\mathcal{C}}
\def\Acal{\mathcal{A}}
\def\X{\mathcal{X}}
\def\Y{\mathcal{Y}}
\def\D{\mathcal{D}}
\def\Th{\mathcal{T}_h}

\def\bE{\boldsymbol E}
\def\bB{\boldsymbol B}

\def\bH{\boldsymbol H}
\def\bJ{\boldsymbol J}
\def\bA{\boldsymbol A}
\def\bx{\boldsymbol x}
\def\bn{\boldsymbol n}
\def\be{\boldsymbol e}
\def\bp{\boldsymbol p}
\def\bq{\boldsymbol q}
\def\cero{\boldsymbol{0}}

\def\Gtu{\widehat{\Gamma}_1}
\def\Dt{\Delta t}
\def\dt{\,dt}
\def\ds{\,ds}
\def\Tf{\mathcal{T}}
\def\diff{\bar{\partial}}

\def\kcte{\mathring{\kappa}}

\def\eA{\texttt{e}_{h,A}}
\def\eT{\texttt{e}_{h,T}}

\def\dpt{\partial_t}

\usepackage{amsopn}
\newcommand{\keywords}[1]{%
\par\medskip\noindent\textbf{Keywords:} #1
}
\newcommand{\ccode}[1]{%
\par\medskip\noindent\textbf{AMS Subject Classification:} #1
}
\begin{document}
%\linenumbers

\title{\textbf{Numerical approximation of a transient thermo-electromagnetic problem in axisymmetric geometries}}

\author{
Dolores G\'omez\thanks{CITMAga, Departamento de Matem\'atica Aplicada, Universidade de Santiago de Compostela, Spain. \texttt{mdolores.gomez@usc.es}}
\and
Bibiana L\'opez-Rodr\'iguez\thanks{Departamento de Matem\'aticas, Universidad Nacional de Colombia, sede Medell\'in, Colombia. \texttt{blopezr@unal.edu.co}}
\and
Pilar Salgado\thanks{CITMAga, Departamento de Matem\'atica Aplicada, Universidade de Santiago de Compostela, Spain. \texttt{mpilar.salgado@usc.es}}
\and
Pablo Venegas\thanks{GIMNAP, Departamento de Matem\'atica, Universidad del B{\'\i}o-B{\'\i}o, Concepci\'on, Chile. \texttt{pvenegas@ubiobio.cl}}
}

\author{
Dolores G\'omez\thanks{CITMAga, Departamento de Matem\'atica Aplicada, Universidade de Santiago de Compostela, Santiago de Compostela, Spain.
Email: \texttt{mdolores.gomez@usc.es}.}
\and
Bibiana L\'opez-Rodr\'iguez\thanks{Departamento de Matem\'aticas, Universidad Nacional de Colombia, sede Medell\'in, Colombia.
Email: \texttt{blopezr@unal.edu.co}.}
\and
Pilar Salgado\thanks{CITMAga, Departamento de Matem\'atica Aplicada, Universidade de Santiago de Compostela, Santiago de Compostela, Spain.
Email: \texttt{mpilar.salgado@usc.es}.}
\and
Pablo Venegas\thanks{GIMNAP, Departamento de Matem\'atica, Universidad del B{\'\i}o-B{\'\i}o, Concepci\'on, Chile.
Email: \texttt{pvenegas@ubiobio.cl}.}
}
%----------------------------------------------------------------------

\date{}

%----------------------------------------------------------------------
\maketitle
%----------------------------------------------------------------------

\begin{abstract}
This paper analyzes a transient thermo-electromagnetic problem arising in the modeling of induction heating processes. Unlike previous studies that focused on steady-state scenarios, we consider a time-dependent thermal problem coupled with a nonlinear time-harmonic electromagnetic problem through  temperature-dependent electrical conductivity and Joule effect. Exploiting cylindrical symmetry and assuming a purely azimuthal current density, we formulate the problem on a two-dimensional meridional section. We introduce a variational formulation in appropriately weighted Sobolev spaces and prove existence of a solution  by a fixed-point argument. Under reasonable assumptions on the physical parameters, we also prove uniqueness. A finite element discretization combined with implicit time stepping is used to compute the numerical solution. To evaluate the accuracy of the approximation, a priori error estimates are derived and validated by numerical experiments.  Finally, numerical simulations illustrate the effectiveness of the proposed approach in an industrially relevant configuration.

%----------------------------------------------------------------------

\keywords{Transient thermo-electromagnetic problem; axisymmetric formulation; finite element approximation; a priori error estimates; induction heating.}

\medskip
\ccode{65N30, 65M60, 35K61}

\end{abstract}

%-------------------------------------------------------------------------------------
\section{Introduction}
\label{sec:intro}
%-------------------------------------------------------------------------------------
Thermo-electromagnetic models describe the coupled evolution of temperature and
electromagnetic fields and, in most applications, take the form of nonlinear systems of
partial differential equations.  This work addresses the numerical analysis of coupled
thermo-electromagnetic systems arising in induction heating, a technique in which an
alternating current in an induction coil generates a time-varying magnetic field that
induces eddy currents in an electrically conductive workpiece. The resulting resistive
losses (Joule heating) raise the workpiece temperature, which can be controlled through the
excitation frequency and current as well as the coil geometry. Due to their flexibility and
reliability, induction heating systems have become an indispensable part of a wide range of
industrial applications, including material forming and induction hardening. From a
modeling viewpoint, the process requires solving an eddy-current problem to compute the
dissipated power together with a heat-transfer problem for the temperature
field. The coupling is twofold: the electrical conductivity depends on the temperature and
Joule heating provides the source term in the thermal equation. The overall problem is
nonlinear and becomes particularly challenging for ferromagnetic materials, where the
magnetic constitutive relation between the magnetic field and the magnetic induction is
nonlinear.

In a previous study~\cite{GLSV1} we analyzed the steady-state
ther\-mo-elec\-tro\-mag\-net\-ic model
as a first step toward  more realistic simulations of induction heating. Here, we extend
that work by coupling a transient thermal problem with a nonlinear time-harmonic
electromagnetic formulation to capture the temperature evolution.
While the harmonic formulation is exact for linear materials under harmonic excitation,
in the present nonlinear setting, it must be considered an approximation of the
electromagnetic response. This is a justified choice, as the large separation between
the electromagnetic and thermal time scales renders a fully transient nonlinear
coupled simulation computationally prohibitive.  For this reason, nonlinear
time-harmonic formulations are widely used in commercial software for industrial
induction heating  simulations.

Compared with the steady-state case, the proposed transient analysis
is more delicate. The main difficulties arise from the additional nonlinear magnetic
term associated with the B–H curve and from the Joule heating source, which
is computed from the electromagnetic solution and may have low regularity (often only in $\L^1$
in space). This complicates the analysis of both the heat equation and the coupled
problem. In particular, time regularity is needed to apply compactness
arguments and establish the existence of a weak solution. Moreover, the time dependence of the
heat equation introduces further challenges, including the need to propose, implement,
and analyze the convergence of an efficient and reliable time discretization scheme. Existence
results for fully transient thermo-electromagnetic models have been analyzed using
time discretization and compactness techniques; see, for example,
\cite{CGS2017,CS2017,WYZ2020}, where Rothe's method is used.

We focus on a two-dimensional axisymmetric setting. Both the underlying mathematical
model and the numerical strategy are standard in the induction heating
literature and have been extensively validated (see, e.g.,
\cite{CCGMRS97,BGMS2007,BGMSV2009}). %In addition, the \cmag{linear} electromagnetic
%subproblem has been studied from both mathematical and numerical viewpoints (see \cite{BRRSIMA}).
However, to the best of our knowledge, neither the well-posedness of the coupled weak formulation, including the nonlinear magnetic effects associated with the
B-H curve, nor a rigorous numerical analysis of a fully discrete approximation has been established so far. This is the main goal of the present work.

%However, to the best of our knowledge, the well-posedness of the coupled weak formulation, as well as a rigorous numerical analysis of a finite element approximation for it, has not yet been established; this is the main objective of the present work.

The resulting thermo-electromagnetic model is structurally similar to the classical
thermistor problem, for which substantial theoretical and numerical literature is available
(\cite{Allegretto1992,Akrivis2005,LGS14,GLS2017,Mbehou2018,SH2018,GSW2021,JMP2022}).
However, the axisymmetric geometry and the specific features of the electromagnetic
subsystem considered here lead to a coupled partial differential equation system that
differs substantially from the standard thermistor setting, and therefore requires a
dedicated variational framework and numerical analysis.

For the electromagnetic problem, we adopt a nonlinear time-harmonic eddy-current
formulation written in terms of the magnetic vector potential. This formulation is
coupled to a transient nonlinear heat equation on the workpiece, with Joule heating as
the source term. Under appropriate assumptions, the three-dimensional problem reduces to
a two-dimensional model on a meridional section. In this setting, we introduce weighted
Sobolev spaces and derive a coupled weak formulation in which the magnetic vector potential
is posed on a computational domain containing the workpiece, the coils and the surrounding
air, whereas the temperature is defined only on the workpiece.

To establish the well-posedness of the coupled thermo-electromagnetic problem, we use a
fixed-point argument similar to that in \cite{BMP2005,BMP2013}, where existence is proved
for three-dimensional transient thermo-electromagnetic models arising in the simulation
of electrodes in electric furnaces. Related existence results for fully transient
eddy-current formulations written in terms of the magnetic vector potential can also
be found in \cite{CGS2017}. Our setting, however, requires a different functional framework:
the axisymmetric reduction leads naturally to cylindrical coordinates and weighted
Sobolev spaces, and the associated degeneracies near the symmetry axis introduce
additional analytical difficulties, even when dealing with a transient linear thermal
model. Under further assumptions on the material coefficients (particularly, on the
electrical and the thermal conductivities), we also prove uniqueness of the coupled
solution. In this work, our contribution goes beyond the continuous analysis. We provide
a rigorous foundation for a fully discrete scheme based on backward Euler time
stepping and a finite element discretization in space. Assuming additional regularity
of the exact solution, we derive optimal-order a priori error estimates for the
resulting approximation.

The paper is organized as follows: In ~\cref{sec:stat}, we formulate the transient
thermo-electromagnetic problem in an axisymmetric setting, using a magnetic vector
potential for the electromagnetic component and prescribing voltages as input data. In~\cref{sec:ex_uni} we introduce the weak formulation, prove well-posedness via Schauder’s fixed-point theorem, and establish uniqueness under additional assumptions on the material coefficients. In~\cref{sec:fully} we present and analyze a fully discrete scheme based on backward Euler time stepping and Lagrange finite elements in space; we prove convergence and derive optimal-order a priori error estimates under additional regularity assumptions. Finally, in~\cref{sec:num} we report numerical tests that confirm the theoretical convergence rates and illustrate the method on an industrially relevant induction-hardening configuration from the automotive industry.

\section{Statement of the problem}
\label{sec:stat}
%-------------------------------------------------------------------------------------

\begin{figure}[!ht]
\centering
\includegraphics*[scale=0.45]{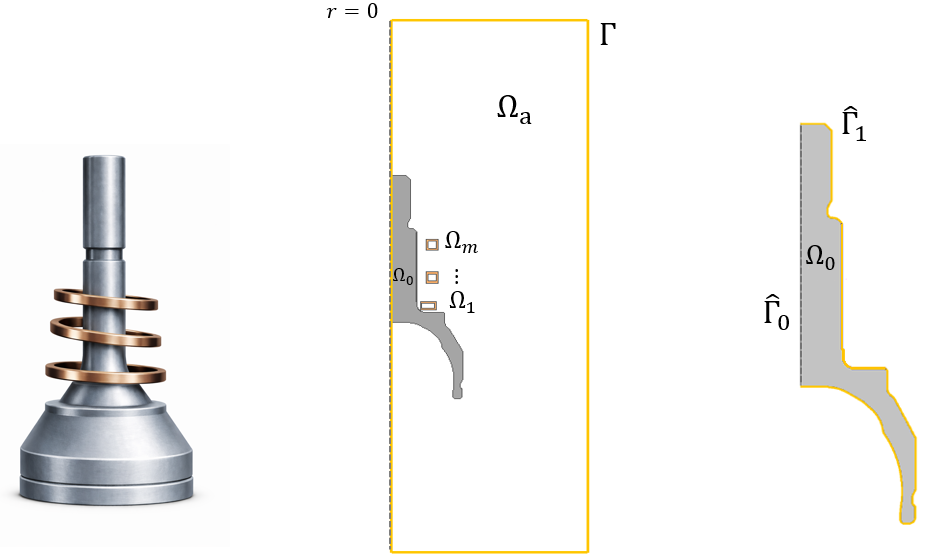}
\caption{Induction heating system including the workpiece and the coil (left). Meridional
domains for the electromagnetic (center) and thermal (right) problems.}
\label{fig:domainaxi_notations}
\end{figure}

We consider a basic induction-heating system consisting of a coil surrounding an axially
symmetric conductive workpiece, as shown in Figure~\ref{fig:domainaxi_notations}. For the axisymmetric
formulation, the coil is replaced by $m$ parallel toroidal rings. The detailed derivation of the model and
the notation follow \cite{GLSV1}, except that the thermal equation is now transient and the electromagnetic
model may involve a nonlinear B-H curve. In particular, the three-dimensional computational domain is a
cylinder $\Ot$ enclosing the heating system and chosen ‘sufficiently large’, whose meridian section $\Omega$
is decomposed as $\Omega=\OC\cup \OA$. The conductor region is $\OC=\Omega_0\cup\Omega_{\rm coil}$, where
$\Omega_0$ is the workpiece section, assumed to be polygonal but not necessarily convex, and
$\Omega_{\rm coil}=\bigcup_{k=1}^m \Omega_k$ is the union of the $m$  coil ring sections. The
surrounding air is denoted by $\OA$.  Finally, we split $\Gamma:=\partial\Omega=\Gamma_0\cup\Gamma_1$
and $\widehat{\Gamma}:=\partial\Omega_0=\widehat{\Gamma}_0\cup\widehat{\Gamma}_1$, where $\Gamma_0$
and $\widehat{\Gamma}_0$ are the parts lying on the symmetry axis $r=0$.

The electromagnetic excitation is time-harmonic with angular frequency $\omega=2\pi f$ and we adopt the eddy current approximation, as in \cite{BGSbook,GLSV1}. It reads
\begin{equation*}%\label{eq:maxwell}
i\omega\bB+\curl\bE =\cero,\quad \curl\bH =\bJ,\quad \div\bB =0,\qquad \text{in } \Ot,
\end{equation*}
where $\bE$, $\bB$, $\bH$ and $\bJ$ are the complex amplitudes of the electric field, magnetic induction,  magnetic field and  current density, respectively.

For the axisymmetric formulation, we use cylindrical coordinates $(r,\theta,z)$ with associated local orthonormal basis $(\be_r,\be_{\theta},\be_z)$, the symmetry axis being $r=0$. We assume that all fields and coefficients are independent of $\theta$, and that the current density in conductors has the form $\bJ(r,\theta,z)=J_\theta(r,z)\be_{\theta}$. Then, the magnetic vector potential has only an azimuthal
component,
\[
\bA(r,\theta,z)=A_\theta(r,z)\,\be_\theta,
\qquad \bB=\curl \bA.
\]
To account for magnetic nonlinearity, we consider the constitutive law
\begin{equation} \label{HnuB}
\bH=\nu(|\bB|)\bB
:=
\begin{cases}
\nuc(|\bB|) \bB, & \text{in } \OC,\\
\nu_0 \bB, & \text{in } \OA,
\end{cases}
\end{equation}
where $\nuc$ denotes the magnetic reluctivity in the conducting region
and $\nu_0>0$ the vacuum reluctivity. In $\OC$,  Ohm's law
reads $\bJ=\sigma(T)\bE$, with temperature-dependent electrical conductivity $\sigma$, which couples the electromagnetic and thermal models.

Following \cite{GLSV1}, there exist complex constants $V_k$,
$k=0,\dots,m$ such that
\begin{equation}\label{eq:JVA}
i\omega A_\theta+\sigma(T)^{-1}J_\theta=\frac{V_k}{r}\qquad \text{in } \Omega_k.
\end{equation}
Moreover, in the workpiece, $V_0=0$. Hence, the electromagnetic model reads:
\begin{align}\label{EM}
-\left(
\frac{\partial}{\partial r}
\left(\frac{\nu}{r}\frac{\partial(r\At)}{\partial r}\right)
+\frac{\partial}{\partial z}
\left(\nu\frac{\partial(\At)}{\partial z}\right)
\right)
=\begin{cases}
-i\omega \sigma \At
+ \sigma \dfrac{V_k}{r} &\text{in }\Ok,\  k=1,\dots,m,\\
\noalign{\smallskip}
-i\omega \sigma \At &\text{in }\Ocero,\\
\noalign{\smallskip}
\phantom{-}0&\text{in }\OA,
\end{cases}
\end{align}
where $\nu=\nu(|\curl(A_\theta\be_{\theta})|)$. We impose the boundary condition  $\At=0$ on $\Gamma$.
% \begin{equation}\label{BCE}
% \At=0  \quad\text{on }\Gamma,
% \end{equation}

The transient thermal is posed only in the workpiece. Under the axisymmetry assumption, the temperature is written as $T=T(t,r,z)$, and for all $t \in [0,\Tf]$  satisfies
\begin{equation}\label{Ttheta}
\rho c_P\frac{\partial T}{\partial t}
-\frac{1}{r}\frac{\partial}{\partial r}\left(r\kappa(T)\frac{\partial T}{\partial r}\right)
-\frac{\partial}{\partial z}\left(\kappa(T)\frac{\partial T}{\partial z}\right)
= \frac{\omega^2}{2}\sigma(T)\,|A_\theta|^2 \,\,\text{ in } \Omega_0.
\end{equation}
Here, $\rho$, $c_P$ and $\kappa$ denote the density, specific heat, and temperature-dependent thermal conductivity, respectively. The right-hand side represents the cycle-averaged Joule heating $|\bJ|^2/(2\sigma)$, obtained from \eqref{eq:JVA} with $V_0=0$, and provides the coupling with the electromagnetic model. Equation \eqref{Ttheta} is complemented with boundary conditions
\begin{equation}\label{BCT}
\begin{aligned}
\kappa(T)\grad T\cdot \bn + \eta\,(T-T_c)&=0\qquad \text{on }\widehat{\Gamma}_1,\\
\kappa(T)\grad T\cdot \bn &=0\qquad \text{on }\widehat{\Gamma}_0,
\end{aligned}
\end{equation}
and with the initial condition $T(0,\cdot)=T_0(\cdot)$ in $\Omega_0$.

%-------------------------------------------------------------------------------------
\section{Mathematical analysis}\label{sec:ex_uni}

% In this section, we make a precise statement of the problem to be solved by means
% of a weak formulation suitable for its mathematical analysis. Then, we prove the
% existence and, under additional assumptions, the uniqueness of a solution. First,
% we introduce some preliminary results which will be used along the paper.
In this section, we state the problem in weak form, in a way suitable for the mathematical analysis. We then prove existence and, under additional assumptions, uniqueness of the solution. To this end, we first introduce some preliminary results and notation. Derivatives are denoted in the standard way with respect to the variable under consideration. In particular, the notations  $\partial_u g$ and $\frac{\partial g}{\partial u}$
are used interchangeably for partial derivatives with respect to $u$.

%-------------------------------------------------------------------------------------
\subsection{Functional spaces and preliminary results}

We define weighted Sobolev spaces appropriate for the mathematical analysis of
the problem and recall some of their properties. For $\vartheta \subseteq \O$ and $\alpha \in \R$, let $\L^2_{\alpha}(\vartheta)$ be the weighted
Lebesgue space of measurable functions $G$ defined in $\vartheta$ with bounded norm
\[
\norm{G}{\L^2_{\alpha}(\vartheta)}^2:=\int_{\vartheta} |G|^2 r^{\alpha} \drdz.
\]
Given $k\in\N$, the weighted Sobolev space $\HkrV$ consists of all functions in $\LdrV$ whose derivatives in the sense
of distributions up to order $k$ are also in $\LdrV$. We define the norms and semi-norms of these spaces in the standard
way.
%; for instance, in $\HurV$ the semi-norm
%\[
%\left|G\right|^2_{\HurV} :=\int_{\vartheta}\left(\left|\pdr G\right|^2
%+\left|\pdz G\right|^2\right)r\drdz.
%\]
We will also use the following Hilbert space:
\[
\HtuV:=\HurV\cap\LdurV,
%\HtdV&:=\left\{Z\in\HtuV:\;\norm{Z}{\HtdV}<\infty\right\},
\]
with its respective norm defined by
\[
\norm{Z}{\HtuV}^2 :=\norm{Z}{\HurV}^2+\norm{Z}{\LdurV}^2.
%\norm{Z}{\HtdV}^2 &:=\norm{Z}{\HtuV}^2
%+\left|r^{-1}\pdr(rZ)\right|_{\HurV}^2 +\norm{\pdz Z}{\HtuV}^2.
\]

In addition, for our analysis, we will rely on the results presented below whose proofs
can be found in references \cite{BM1999}, \cite{BRRSIMA} and \cite{GP2006}, respectively.
\begin{lemma}
\label{lemma:emb_cont}
Let $\vartheta \in \{\O,\Ocero\}$, there holds $\H^1_1(\vartheta) \subset\L^6_1(\vartheta)$ continuously, and there
exists $C_e>0$ such that $\norm{G}{\L^6_1(\vartheta)}\leq C_e \norm{G}{\H^1_1(\vartheta)}$ for all
$G\in\H^1_1(\vartheta)$.
\end{lemma}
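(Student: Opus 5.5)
The idea is to read the weighted embedding as a statement about axisymmetric functions in three dimensions. Then we can use the standard Sobolev embedding $\H^1(\Ot)\subset\L^6(\Ot)$ in $\R^3$.

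Given $G\in\H^1_1(\vartheta)$, define on the solid of revolution $\breve\vartheta$ generated by $\vartheta$ the function $\breve G(x,y,z):=G(\sqrt{x^2+y^2},z)$. In cylindrical coordinates the volume element is $r\,dr\,d\theta\,dz$, so for every $p$
\[
\int_{\breve\vartheta}|\breve G|^p\,d\bx=2\pi\int_\vartheta |G|^p r\drdz .
\]
Moreover $|\grad\breve G|^2=|\partial_r G|^2+|\partial_z G|^2$. Hence $\norm{\breve G}{\H^1(\breve\vartheta)}^2=2\pi\norm{G}{\H^1_1(\vartheta)}^2$ and $\norm{\breve G}{\L^6(\breve\vartheta)}^6=2\pi\norm{G}{\L^6_1(\vartheta)}^6$.

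\textbf{Step 1: the lift is an isometry into $\H^1(\breve\vartheta)$.} The identities above are immediate for smooth $G$. To pass to general $G$, show that smooth functions up to the boundary, including near the axis, are dense in $\H^1_1(\vartheta)$. One way is to reflect $G$ evenly across $r=0$ and mollify, which preserves the weight structure. Alternatively, argue directly that $\breve G$ has a weak gradient in $\breve\vartheta$. This requires checking that the axis, a set of zero capacity in $\R^3$, does not carry a distributional singular part.

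\textbf{Step 2: the domain $\breve\vartheta$ admits the 3D embedding.} For $\vartheta=\O$, $\breve\vartheta=\Ot$ is a cylinder. For $\vartheta=\Ocero$, $\breve\vartheta$ is the workpiece, a bounded Lipschitz solid of revolution; the axis portion $\widehat\Gamma_0$ lies inside it and is not a boundary. In both cases $\breve\vartheta$ is a bounded Lipschitz domain, and the classical result gives $\norm{\breve G}{\L^6(\breve\vartheta)}\le C\norm{\breve G}{\H^1(\breve\vartheta)}$. Transferring back through the two identities yields
\[
\norm{G}{\L^6_1(\vartheta)}\le (2\pi)^{1/2-1/6}C\norm{G}{\H^1_1(\vartheta)},
\]
which defines $C_e$.

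\textbf{Main obstacle.} The delicate part is the Lipschitz regularity of $\breve\vartheta$ where the meridian boundary $\partial\vartheta\setminus\{r=0\}$ meets the axis. There the revolved surface can form a conical vertex. A cone with nonzero opening still satisfies the cone condition, so the Sobolev embedding holds, since it only requires the cone property rather than Lipschitz. The exceptional case is a cusp, where the boundary meets the axis tangentially; this cannot occur for a polygonal section that meets the axis transversally. I would therefore invoke the embedding for domains with the cone property, which avoids the issue. The density argument of Step 1 near the axis is the other technical point; citing the known density results for weighted spaces, as in \cite{BM1999}, settles it.
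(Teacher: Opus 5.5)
Your argument is correct, and it is the standard route to this fact. The paper gives no proof of its own and simply cites \cite{BM1999}; the usual justification of that result is exactly your identification of $\H^1_1(\vartheta)$ with axisymmetric functions in $\H^1(\breve\vartheta)$. This uses $\norm{\breve G}{\H^1(\breve\vartheta)}^2=2\pi\norm{G}{\H^1_1(\vartheta)}^2$, with the axis removable because it has zero capacity in $\R^3$, followed by the three-dimensional embedding $\H^1\subset\L^6$.

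Two minor refinements. First, rely on the capacity argument in Step~1: mollifying the even reflection is not automatically stable in $\L^2$ with weight $|r|$, which is not an $A_2$ weight. Second, a solid conical tip is still a Lipschitz graph; the configuration that actually destroys Lipschitz regularity is a section touching the axis at an isolated vertex, which revolves into the region between two coaxial cones. That region does have the cone property, so your fallback to the cone-condition version of the embedding covers it.
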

\begin{lemma}\label{lemma:Poincare}
For all $W \in \HuT$, there exists $C_p>0$ such that
\[
C_p\,\norm{W}{\HuT}^2\leq |W|_{\HuT}^2 +\norm{W}{\LdGur}^2.
\]
\end{lemma}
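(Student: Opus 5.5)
The plan is the standard compactness-contradiction argument for Poincaré--Friedrichs-type inequalities, adapted to the weighted space $\HuT$. Suppose the inequality fails for every constant. Then for each $n\in\N$ there is $W_n\in\HuT$ with
\[
\norm{W_n}{\HuT}=1,\qquad |W_n|_{\HuT}^2+\norm{W_n}{\LdGur}^2<\frac1n .
\]
First I will check that the right-hand side is well defined, i.e.\ that the weighted trace $W\mapsto W|_{\Gtu}$ is continuous from $\HuT$ into $\LdGur$. Away from the axis the weight $r$ is bounded above and below, so this is the classical trace theorem on a piece of $\Ocero$ at positive distance from $r=0$. Near the points where $\Gtu$ meets the axis, the factor $r$ in the boundary measure degenerates, and I would obtain the bound either by viewing $W$ as an axisymmetric $\H^1$ function on the three-dimensional solid of revolution (where $\LdGur$ is, up to $2\pi$, the surface $\L^2$ norm) or by a direct one-dimensional argument with $\partial_r(r|W|^2)$ integrated along segments.

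Next I need compactness of $\HuT\hookrightarrow\Ldrc$. Again the simplest route is the three-dimensional lift: $W(r,z)\mapsto W(\sqrt{x^2+y^2},z)$ is an isometry (up to $2\pi$) from $\HuT$ onto the axisymmetric subspace of $\H^1$ of the Lipschitz solid of revolution. It maps $\Ldrc$ onto the corresponding subspace of $\L^2$, so Rellich's theorem transfers. Lemma~\ref{lemma:emb_cont} is consistent with this picture; the compact embedding itself is known from \cite{BM1999,GP2006}.

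With these facts, extract a subsequence with $W_n\rightharpoonup W$ weakly in $\HuT$ and $W_n\to W$ strongly in $\Ldrc$. By weak lower semicontinuity, $|W|_{\HuT}=0$, so $\nabla W=0$ and $W$ is a constant $c$ on the connected domain $\Ocero$. Then $\norm{W_n-W}{\HuT}^2=\norm{W_n-W}{\Ldrc}^2+|W_n|_{\HuT}^2\to0$, so the convergence is strong and $\norm{W}{\HuT}=1$. Continuity of the trace gives $\norm{W}{\LdGur}=\lim\norm{W_n}{\LdGur}=0$. Since $\Gtu$ has positive measure with respect to $r\,dl$ (it contains points with $r>0$), this forces $c=0$, contradicting $\norm{W}{\HuT}=1$.

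The main obstacle is justifying the two functional-analytic ingredients, the trace and the compact embedding, in the degenerate weighted setting near $r=0$. I would handle both through the three-dimensional axisymmetric lift rather than work directly with the weights, invoking the characterization in \cite{BM1999,GP2006}.
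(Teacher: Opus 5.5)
Your argument is correct. The paper gives no proof of its own here: the lemma is quoted from the literature on weighted spaces (\cite{BM1999,BRRSIMA,GP2006}), so what you wrote is a self-contained substitute rather than a reconstruction.

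It is the standard compactness-by-contradiction argument, and every step checks out. Lower semicontinuity gives $\nabla W=0$. The identity $\|W_n-W\|_{\HuT}^2=\|W_n-W\|_{\Ldrc}^2+|W_n|_{\HuT}^2$ upgrades weak to strong convergence. Finally, $\int_{\Gtu}r\,dl>0$ (each side of $\Gtu$ meets the axis in at most one point) kills the constant.

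On the ingredients: the trace bound is exactly \cref{lemma:Traza}, so you can simply cite it instead of rederiving it. The one input not among the paper's stated lemmas is the \emph{compactness} of $\HuT\hookrightarrow\Ldrc$; the continuous embedding of \cref{lemma:emb_cont} does not give it. The axisymmetric lift plus Rellich is the right way to obtain it. When you invoke the lift, say explicitly that the solid of revolution is Lipschitz because $\Ocero$ meets the axis along the segment $\widehat{\Gamma}_0$ (or not at all). A polygon touching the axis at an isolated vertex would produce a double-cone point, where the standard Lipschitz trace and Rellich theorems do not apply directly.

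Compared with the cited route, yours buys self-containedness at the usual price of a non-explicit $C_p$. It also yields a constant independent of $W$, which is the form the paper actually uses later (e.g.\ in the factor $C_p\min\{\kappa_*,\eta_*\}$). The lemma as literally stated has the quantifiers in the wrong order.
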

\begin{lemma}\label{lemma:Traza}
For all $W \in \HuT$, there exists $C_{\mathrm{tr}}>0$ such that
\[
\norm{W}{\LdGur}\leq C_{\mathrm{tr}}\norm{W}{\HuT}.
\]
\end{lemma}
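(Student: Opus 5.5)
The plan is to reduce the weighted trace inequality on $\Gtu$ to an unweighted trace inequality on a region that stays away from the symmetry axis. The only difficulty in the weighted setting is the degeneracy of the weight $r$ as $r\to 0$. However, $\Gtu$ is the part of $\partial\Ocero$ not lying on the axis, so the weight $r$ on $\Gtu$ is harmless wherever $\Gtu$ is separated from $r=0$. The delicate points are the finitely many endpoints where $\Gtu$ meets $\widehat{\Gamma}_0$.

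\textbf{Step 1 (localization).} Take a smooth cutoff $\chi$ with $\chi=1$ for $r\ge 2\delta$ and $\chi=0$ for $r\le\delta$, and split $W=\chi W+(1-\chi)W$.

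\textbf{Step 2 (away from the axis).} On $\{r\ge\delta\}$ the weights $r$ and $1$ are equivalent: $r$ is bounded above on the bounded domain $\Ocero$ and bounded below by $\delta$. Hence $\chi W\in \H^1(\Ocero\cap\{r>\delta/2\})$ with norm bounded by $C\norm{W}{\HuT}$. Since $\Ocero$ is polygonal, the standard trace theorem applies, and multiplying by $r\le C$ on the boundary gives
\[
\norm{\chi W}{\LdGur}\le C\norm{W}{\HuT}.
\]

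\textbf{Step 3 (near the axis).} The part of $\Gtu$ with $r<2\delta$ consists of finitely many segments emanating from axis points, transversal or tangent to $r=0$. Here I would use the identity
\[
\int_{\gamma} |u|^2 r\,(\bn\cdot\be_r)\,dl=\int_{D}\partial_r(r|u|^2)\,\drdz
\]
with $u=(1-\chi)W$, applied on a small subregion $D$ whose remaining boundary lies on the axis (where $r=0$ kills the term) or where $u=0$. Since $\partial_r(r|u|^2)=|u|^2+2r\,u\,\partial_r u$, the right-hand side is bounded by $\norm{u}{\L^2(D)}^2+2\norm{u}{\Ldrc}|u|_{\HuT}$.

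This leaves an unweighted $\L^2$ norm to control. I would handle it by applying the same identity with a general vector field $\boldsymbol{\beta}$ transversal to $\gamma$ instead of $\be_r$, namely
\[
\int_\gamma |u|^2 r\,\boldsymbol\beta\cdot\bn\,dl=\int_D \div(r|u|^2\boldsymbol\beta)\,\drdz .
\]
Choosing $\boldsymbol\beta$ with $\beta_r=0$ near the axis where possible (e.g.\ $\boldsymbol\beta=\be_z$ for segments not parallel to $z$) gives $\div(r|u|^2\be_z)=2r\,u\,\partial_z u$, which is bounded by $\norm{u}{\Ldrc}|u|_{\HuT}$. For segments where $\be_z\cdot\bn$ vanishes, i.e.\ horizontal sides, $\be_z$ is actually normal, so this choice works; vertical sides are at $r>0$ fixed and are covered by Step 2. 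Mixing the two choices of $\boldsymbol\beta$ via a partition of unity along $\Gtu$ yields the near-axis bound.

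\textbf{Step 4.} Summing the two contributions gives $\norm{W}{\LdGur}\le C_{\mathrm{tr}}\norm{W}{\HuT}$. A density argument with smooth functions, which are dense in $\HuT$, justifies the integration by parts.

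The main obstacle is Step 3: choosing a field $\boldsymbol\beta$ with $\boldsymbol\beta\cdot\bn\ge c>0$ on $\Gtu$ near corners touching the axis, while keeping the divergence term free of the unweighted $|u|^2$. If this construction fails at some corner geometry, the fallback is to cite the known weighted trace result of \cite{GP2006}, as the paper does.
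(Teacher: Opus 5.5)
The paper does not prove this lemma; it quotes it from the weighted-space literature cited just before Lemma~\ref{lemma:emb_cont}. Your multiplier argument is therefore a different, self-contained route. Steps~1, 2 and~4 are sound; in Step~2, $\chi W$ even lies in the unweighted $\H^1(\Ocero)$, so the ordinary trace theorem on the polygon applies directly.

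\textbf{The gap is in Step~3}, in exactly the corner geometry you were worried about. Suppose $\Ocero$ touches the axis only at a vertex $P$, for instance a rhombus-shaped section, which the hypotheses allow. Then two edges of $\Gtu$ leave $P$ at angles $\theta_1<\theta_2$ in $(-\pi/2,\pi/2)$, measured from $\be_r$. Their outward normals satisfy $\bn\cdot\be_z=-\cos\theta_1<0$ on the lower edge and $\bn\cdot\be_z=\cos\theta_2>0$ on the upper one. Consequently:
\begin{itemize}
\item No continuous $\boldsymbol\beta$ with $\beta_r=0$ at $P$ can satisfy $\boldsymbol\beta\cdot\bn\ge c>0$ on both edges near $P$.
\item No partition of unity can separate the two edges, since they meet at $P$.
\item The cutoffs must be two-dimensional and localized around each contact point of $\Gtu$ with the axis. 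With a single $\boldsymbol\beta=\be_z$ on the whole strip $\{r<2\delta\}$, the bottom and top faces of a cylinder contribute with opposite signs and can cancel.
\end{itemize}
Your sentence on horizontal sides is also inverted: $\be_z\cdot\bn$ vanishes on vertical sides, while on horizontal sides $\be_z$ is the normal.

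\textbf{The missing observation} is that the unweighted term you tried to avoid is harmless. By H\"older's inequality and Lemma~\ref{lemma:emb_cont},
\[
\int_{\Ocero}|u|^2\drdz\le\Big(\int_{\Ocero}|u|^6\,r\drdz\Big)^{1/3}\Big(\int_{\Ocero}r^{-1/2}\drdz\Big)^{2/3}\lesssim\norm{u}{\HuT}^2 .
\]
Hence every term in $\div(r|u|^2\boldsymbol\beta)=|u|^2\div(r\boldsymbol\beta)+2r\,\mathrm{Re}(\bar u\,\boldsymbol\beta\cdot\grad u)$ is controlled for any Lipschitz $\boldsymbol\beta$ transversal to $\Gtu$, and the restriction $\beta_r=0$ is unnecessary.

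Alternatively, at an isolated axis vertex you can take the constant field $\boldsymbol\beta=-\boldsymbol d_P$, where $\boldsymbol d_P$ is the unit bisector of the corner and the corner angle $\alpha_P$ lies in $(0,\pi)$. Then $\boldsymbol\beta\cdot\bn=\sin(\alpha_P/2)>0$ on both edges and $\beta_r<0$. The unweighted term then enters with a favorable sign and can simply be dropped.

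At endpoints of axis segments your choice $\pm\be_z$ is correct, because the single emanating edge has $\bn\cdot\be_z$ of fixed sign (the corner angle is less than $\pi$). With either repair your argument closes without relying on the citation. It also makes explicit that only the corners of $\Ocero$ lying on the axis need special care.
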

\begin{lemma}
\label{lemma:Htu}
For all $Z \in \Htu$, $\partial_r(rZ)$ belongs to $\Ldur$ and satisfies
\[
\norm{\partial_rZ}{\Ldr}^2+ \norm{Z}{\Ldur}^2
\leq\norm{\partial_r(rZ)}{\Ldur}^2\leq 2\norm{\partial_rZ}{\Ldr}^2+2\norm{Z}{\Ldur}^2.
\]
\end{lemma}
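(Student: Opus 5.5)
The plan is to expand $\partial_r(rZ)=Z+r\,\partial_rZ$ and control the cross term by integration by parts in $r$. I would first argue for smooth $Z$ with compact support in $\overline{\O}\setminus\Gamma_0$ that vanish near $r=0$, and then pass to general $Z\in\Htu$ by density. Such functions should be dense in $\Htu$; this is the standard density result for these weighted spaces (see \cite{BRRSIMA}, \cite{GP2006}). For such $Z$, we have $r^{-1}|\partial_r(rZ)|^2 = r|\partial_rZ|^2 + r^{-1}|Z|^2 + 2\,\mathrm{Re}(Z\,\partial_r\bar Z)$. Integrating gives
\[
\norm{\partial_r(rZ)}{\Ldur}^2=\norm{\partial_rZ}{\Ldr}^2+\norm{Z}{\Ldur}^2+\int_\O \partial_r|Z|^2\drdz .
\]

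The \emph{upper bound} does not need the cross term at all. Pointwise, $|Z+r\partial_rZ|^2\le 2|Z|^2+2r^2|\partial_rZ|^2$. Dividing by $r$ and integrating gives the right-hand inequality directly for every $Z\in\Htu$, with no density argument. In particular this shows that $\partial_r(rZ)\in\Ldur$.

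For the \emph{lower bound} I need $\int_\O\partial_r|Z|^2\drdz\ge0$. Fix $z$ and integrate in $r$ along each horizontal segment of $\O$. The contribution is $|Z|^2$ evaluated at the segment endpoints with signs given by the $r$-component of the outward normal: $\int_\O\partial_r|Z|^2\drdz=\int_{\partial\O}|Z|^2 n_r\,dl$. On $\Gamma_0$ (the axis) the function vanishes by the choice of the dense class. On $\Gamma_1$ there is a subtlety. The lemma as stated is for $\Htu$, and the formulation uses $\At=0$ on $\Gamma$. Since $\O$ is a large rectangle-like meridian section, the outer boundary has $n_r\ge0$. If $Z$ is not assumed to vanish on $\Gamma_1$, the boundary term $\int_{\Gamma_1}|Z|^2n_r$ is still nonnegative provided $n_r\ge0$ on $\Gamma_1$. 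This holds for the rectangular section $\O=(0,R)\times(z_0,z_1)$: $n_r=1$ on the right side and $n_r=0$ on the top and bottom. So the cross term is $\ge0$, which yields the left inequality.

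The main obstacle is justifying the behavior at the axis. One must show that the density class (functions vanishing near $r=0$) is dense in $\Htu=\H^1_1\cap\L^2_{-1}$. Equivalently, the trace of $|Z|^2$ at $r=0$ must vanish in the limit, which is exactly what membership in $\L^2_{-1}$ enforces: $\int r^{-1}|Z|^2<\infty$ forces $\liminf_{r\to0}\int|Z(r,z)|^2dz=0$. I would handle this directly, without invoking density. Integrate over $\{r>\varepsilon\}$, where the boundary term at $r=\varepsilon$ is $-\int|Z(\varepsilon,z)|^2dz$. Then choose a sequence $\varepsilon_n\to0$ along which this term tends to zero, which is possible by the integrability just noted. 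Letting $\varepsilon_n\to0$ and using monotone convergence in the remaining integrals completes the argument.
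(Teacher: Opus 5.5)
Your proof is correct. The paper does not prove this lemma itself; it only refers to \cite{BRRSIMA}, so your argument supplies what the paper takes from the literature. The upper bound, and the membership $\partial_r(rZ)\in\Ldur$, follow pointwise from $\partial_r(rZ)=Z+r\,\partial_rZ$. The lower bound follows from the identity $\norm{\partial_r(rZ)}{\Ldur}^2=\norm{\partial_rZ}{\Ldr}^2+\norm{Z}{\Ldur}^2+\int_0^L|Z(R,z)|^2\,dz$ on the rectangle $\O=(0,R)\times(0,L)$, where the axis contribution is removed because $Z\in\Ldur$.

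Your truncation at $r=\varepsilon$ has the advantage of avoiding the density theorem for functions vanishing near the axis. Make explicit that on $\{r>\varepsilon\}$ the weight is bounded above and below, so $Z$ lies in the unweighted $\H^1$ there and traces and the Gauss--Green formula are available.

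You can also drop the subsequence. Since $\bigl|\partial_r|Z|^2\bigr|\le 2\,(r^{-1/2}|Z|)(r^{1/2}|\partial_rZ|)$ is integrable on $\O$, the quantity $\int_0^L|Z(\varepsilon,z)|^2\,dz$ actually converges as $\varepsilon\to0$. Its limit must be $0$, because its integral against $\varepsilon^{-1}\,d\varepsilon$ equals $\norm{Z}{\Ldur}^2<\infty$.
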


Moreover, we recall the classical
functional framework for functions defined on a bounded interval $[0,\Tf]$
 with values in a separable Hilbert space $X$. We denote by
$\Ccal([0,\Tf];X)$  the Banach space  of all continuous
functions $g:[0,\Tf]\to X$. We also consider the space $\L^2(0,\Tf;X)$
% of
% classes of functions $g:[0,\Tf]\to X$ that are B\"{o}chner-measurable and
% such that
of equivalence classes of  B\"{o}chner-measurable functions
$g:[0,\Tf]\to X$ such that
$\|g\|_{\L^2(0,\Tf;X)}
:=\left(\int_0^{\Tf}\|g(t)\|_{X}^2\,dt\right)^{1/2}<\infty$. Furthermore,
we define $$\H^1(0,\Tf;X):=\left\{g\in\L^2(0,\Tf;X):
\ \dpt g\in\L^2(0,\Tf;X)\right\}.$$
% (we will use indistinctly the
% notations $\dpt g$ and $dg/dt$ for the derivative
% with respect to the variable $t$).
Analogously, $\H^k(0,\Tf;X)$ is defined
for every $k\in\N$.

In what follows, let us suppose that $\sigma(\bx,u)$, $\kappa(\bx,u)$ are Carath\'eodory functions from
$\O\times\R$ into $\R$, i.e., measurable with respect to $\bx:=(r,\theta)$ and continuous with respect to
$u$. Moreover, we make the following assumptions on the data
\begin{align}
&\eta_*\leq\eta(\bx)\leq\eta^*\ \text{ a.e. }\bx\in\Gtu,\label{hyp:eta}\\
&\kappa_*\leq\kappa(\bx,u)\leq\kappa^*\ \text{ a.e. }\bx\in\Ocero,\ \forall u\in \R,\\
&\sigma_*\leq\sigma(\bx,u)\leq\sigma^*\ \text{ a.e. }\bx\in\Ocero, \ \forall u\in \R,\label{hyp:sigma}
\end{align}
where $ \eta_*^{(*)},\kappa_*^{(*)}$ and $\sigma_*^{(*)}$   are strictly positive constants.
% In addition, the electrical conductivity in the coil is assumed to be constant
% \begin{equation}
% \label{hyp:sigmak}
% \sigma(\bx,u)= \sigma_k>0\ \text{ a.e. }\bx\in\Ok,\ k=1,\dots,m,
% \text{ and }\sigma(\bx,u)= 0\ \text{ a.e. }\bx\in\OA\,,\ \forall u\in \R.
% \end{equation}
In addition, $\sigma$ is assumed to be constant in each coil ring and equal to zero in the air region, namely,
 \begin{equation}
\label{hyp:sigmak}
\sigma(\bx,u)=
\begin{cases}
\sigma_k>0, & \text{a.e. in }\Omega_k,\quad k=1,\dots,m,\\[1mm]
0, & \text{a.e. in }\OA,
\end{cases}
\qquad \forall u\in\mathbb R.
\end{equation}
We denote by $\scoil\defeq \underset{1\leq k\leq m}{\max}\,\sigma_k$, the largest electrical conductivities among
the coil rings and for any $\vartheta \subseteq \O$, we define $\Itt\defeq \int_{\vartheta}r\drdz$. Let us also consider the
following assumptions on the nonlinear magnetic reluctivity  $\nuc:\mathbb{R}_0^+\to \mathbb{R}$ (see, e.g., \cite{Pechstein})
\begin{align}
\exists\, \nu_{\textrm{min}}\,&:\,\, 0<\nu_{\textrm{min}}\leq \nuc(p)\leq\nu_0&& \forall p\in \R_0^+\,, \label{nuA}\\
\exists\, \xi>0\,&:\,|\nuc(p)p-\nuc(q)q|\leq \xi|p-q|&&\forall p,q \in \R_0^+\,, \label{nuL}\\
\exists\, \varrho>0\,&:\, (\nuc(p)p-\nuc(q)q)(p-q)\geq\varrho|p-q|^2&& \forall p,q \in \R_0^+\,.\label{nuSM}
\end{align}
The previous properties are natural assumptions on the reluctivity consistent with physical $BH$-  curves.
 Finally, for simplicity, we  assume that $\rho= c_P = 1$.

 In order to establish a weak formulation of problem \eqref{EM}--\eqref{BCT}, we introduce
 %we consider the following subspace of
% $\Htu$:
%
\[
\HtuG:=\left\{Z\in\Htu: \,\, Z=0 \,\, \mbox{ on } \Gamma\right\}.
\]
We integrate \eqref{EM} multiplied by a test function $rZ$ with $Z\in\HtuG$ and \eqref{Ttheta} by $rW$ with $W\in\HuT$,
integrate by parts and use \eqref{BCT}. Thus, we are led to the following nonlinear coupled variational problem:
\begin{problem}\label{pbm:AT_time0}
Given $\bV\defeq(V_1,\dots,V_m)\in\C^m$, $T_c\in\L^2(0,\Tf;\LdGur)$ and $T_0\in\Ldrc$, find
$(\At,\Tt)\in\L^2(0,\Tf;\HtuG)\times \L^2(0,\Tf;\HuT)\cap\H^1(0,\Tf;\Ldrc)$ such that for all $(Z,W)\in\HtuG\times\HuT$
and a.e. $t\in (0,\Tf)$
\begin{align*}
i\omega\int_{\OC}\sigma(\cdot,\Tt) \At\bar{Z}r\drdz
+\int_\O \nu(|\curl(\At\be_{\theta})|)
\left(
\frac{1}{r}\frac{\partial(r\At)}{\partial r}\frac{1}{r}\frac{\partial(r\bar{Z})}{\partial r}
+\frac{\partial \At}{\partial z}\frac{\partial \bar{Z}}{\partial z}
\right)r\drdz\\
% i\omega\int_{\OC}\sigma(\cdot,\Tt) \At\bar{Z}r\drdz
% +\int_\O \nu(|\curl(\At\be_{\theta})|)
% \left(
% \frac{1}{r^2}\partial_r(r\At)\partial_r(r\bar{Z})
% +\partial_z \At\partial_z \bar{Z}
% \right)r\drdz\\
 =\sum_{k=1}^m\int_{\Ok}\sigma_k V_k\bar{Z}\drdz,\\
\int_{\Ocero}\partial_t\Tt Wr\drdz+
\int_{\Ocero}\kappa(\cdot,\Tt)\grad \Tt\cdot\grad Wr\drdz+\int_{\Gtu}\eta\, \Tt\,Wr\dl\\
=\frac12\int_{\Ocero}\omega^2\sigma(\cdot,\Tt)|\At|^2Wr\drdz
+\int_{\Gtu}\eta\,T_c\,Wr\dl ,\\
\Tt(0)=T_0\qquad \mbox{in }\Ocero.
\end{align*}
\end{problem}
In the upcoming section, we will establish the existence of solution to the
preceding problem through the application of a fixed-point argument.
 Additionally, under additional assumptions on the coefficients, we will
provide a proof for the uniqueness of the solution.

%-------------------------------------------------------------------------------------
%\section{Existence and uniqueness}
%-------------------------------------------------------------------------------------
\subsection{Existence}

To establish the existence of a solution to \cref{pbm:AT_time0},
we employ Schauder's fixed-point theorem. To this end, we define two operators: one corresponding to the nonlinear electric problem and another for the nonlinear thermal problem. Both operators are shown to be bounded and continuous.
%
%To achieve this, we define two operators corresponding to the nonlinear electric and thermal problems,
%respectively, that are bounded and continuous.
%To prove the existence of  solution to \cref{pbm:AT_time0}, let us first show a useful
%result to prove that the nonlinear electric problem has a unique solution.
%On the other hand, using the classical theory of parabolic problems,
%we guarantee a unique solution to thermal problem. This allows us
%to define two bounded and continuous operators that, via Schauder's
%fixed-point theorem, we can proof the existence of a solution to
%\cref{pbm:AT_time0}.

Let us first introduce the nonlinear operator $\Acal:\Htu\to\Htu'$, that will be used to study the electrical problem:
\begin{equation}\label{eq:Acal}
\langle \Acal(G),Z\rangle \defeq \int_\O \nu(|\curl(G\be_{\theta})|)
\left(
\frac{1}{r}\frac{\partial(rG)}{\partial r}\frac{1}{r}\frac{\partial(r\bar{Z})}{\partial r}
+\frac{\partial G}{\partial z}\frac{\partial \bar{Z}}{\partial z}
\right)r\drdz\quad\forall G,Z\in\Htu.
\end{equation}
Notice that, using the expression of the {curl} operator in cylindrical coordinates,  $\Acal$ can be rewritten as %follows (cf.\eqref{curlF})
\[
\langle\Acal(G),Z\rangle
= \int_\O \nu(|\curl(G\be_{\theta})|) \curl(G\be_{\theta})\cdot\curl(\bar{Z}\be_{\theta})\,r\drdz.
\]
Next we show several key  properties of this operator which will be instrumental in various contexts throughout this article.

\begin{lemma}\label{lemma:Acal}
The nonlinear operator $\Acal$ is strongly monotone and Lipschitz continuous, i.e., there exist $\tilde{\xi}$ and $\tilde{\varrho}$ strictly positive constants such that
\begin{align}
\label{AcalSM}
\langle\Acal(G)-\Acal(Z),G-Z\rangle&\geq \tilde{\varrho}\norm{G-Z}{\Htu}^2&&\forall G,Z\in\Htu,\\
\label{AcalL}
|\langle\Acal(G)-\Acal(F),Z\rangle|&\leq \tilde{\xi} \norm{G-F}{\Htu}\,\norm{Z}{\Htu}&&\forall G,F,Z\in\Htu,
\end{align}
where $\tilde{\varrho}\defeq \frac{\min\{\varrho,\nu_0\}}{2\max\{1, R^2\}}$,  $\tilde{\xi}\defeq 4\max\{3\xi,\nu_0\}$
and  $R$ denoting the width of the rectangle $\O$.
\end{lemma}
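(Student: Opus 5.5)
Write $\bp:=\curl(G\be_\theta)$ and $\bq:=\curl(Z\be_\theta)$, viewed as complex vectors with components $(-\partial_z G,\ r^{-1}\partial_r(rG))$. Then
\[
\langle\Acal(G)-\Acal(Z),\bar{\,\cdot\,}\rangle
\]
reduces to the integral $\int_\O (\nu(|\bp|)\bp-\nu(|\bq|)\bq)\cdot\overline{(\bp-\bq)}\,r\drdz$. The plan is to prove pointwise vector inequalities for the map $\bp\mapsto\nu(|\bp|)\bp$ on $\C^2$. In $\OA$ this map is just $\nu_0\bp$, which is trivially monotone and Lipschitz with constant $\nu_0$. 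In $\OC$ we must lift the scalar hypotheses \eqref{nuL}--\eqref{nuSM} to vectors. The pointwise estimates are then integrated with weight $r$, and the resulting $\L^2_1$ curl-norm is compared with the $\Htu$ norm using Lemma~\ref{lemma:Htu}.

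\textbf{Monotonicity.} Write $f(\bp)=\nu(|\bp|)\bp$ and $a=|\bp|$, $b=|\bq|$. Expanding,
\[
\mathrm{Re}\,(f(\bp)-f(\bq))\cdot\overline{(\bp-\bq)}=\nu(a)a^2+\nu(b)b^2-(\nu(a)+\nu(b))\mathrm{Re}(\bp\cdot\bar\bq).
\]
We compare this with $\varrho|\bp-\bq|^2$. Using $\mathrm{Re}(\bp\cdot\bar\bq)=ab\cos\phi$, the expression equals
\[
(\nu(a)a-\nu(b)b)(a-b)+(\nu(a)+\nu(b))ab(1-\cos\phi).
\]
The first term is at least $\varrho(a-b)^2$ by \eqref{nuSM}. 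The second is at least $\min\{\varrho,\nu_{\min}\}\,2ab(1-\cos\phi)$. This uses $\nu\ge\nu_{\min}$, and applying \eqref{nuSM} with $q=0$ gives $\nu(p)\ge\varrho$. Since $|\bp-\bq|^2=(a-b)^2+2ab(1-\cos\phi)$, we get the pointwise bound with constant $\min\{\varrho,\nu_0\}$ after checking $\varrho\le\nu_0$. This last check follows from \eqref{nuA} and \eqref{nuSM}, or else we simply take the minimum with $\nu_0$ coming from the air region.

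Integrating, $\mathrm{Re}\langle\Acal(G)-\Acal(Z),G-Z\rangle\ge\min\{\varrho,\nu_0\}\int_\O|\curl((G-Z)\be_\theta)|^2r$. We interpret \eqref{AcalSM} via the real part, or the modulus. Next, Lemma~\ref{lemma:Htu} gives $\|\partial_r Y\|^2_{\Ldr}+\|Y\|^2_{\Ldur}\le\|r^{-1}\partial_r(rY)\|^2_{\Ldr}$. Adding $\|\partial_zY\|^2$, the curl seminorm controls $|Y|^2_{\H^1_1}+\|Y\|^2_{\Ldur}$. The remaining $\|Y\|^2_{\Ldr}$ is bounded by $R^2\|Y\|^2_{\Ldur}$, since $r\le R$. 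This yields $\|Y\|^2_{\Htu}\le 2\max\{1,R^2\}\,\|\curl(Y\be_\theta)\|^2_{\L^2_1}$, which gives $\tilde\varrho$.

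\textbf{Lipschitz continuity.} The main obstacle is the vector Lipschitz bound $|f(\bp)-f(\bq)|\le 3\xi|\bp-\bq|$ in $\OC$. We split
\[
f(\bp)-f(\bq)=(\nu(a)a-\nu(b)b)\frac{\bp}{a}+\nu(b)b\Bigl(\frac{\bp}{a}-\frac{\bq}{b}\Bigr).
\]
The first term is at most $\xi|a-b|\le\xi|\bp-\bq|$ by \eqref{nuL}. For the second, \eqref{nuL} with $q=0$ gives $\nu(b)b\le\xi b$. We then use the elementary estimate $b\,|\bp/a-\bq/b|\le 2|\bp-\bq|$, valid for any vectors. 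Together these give the bound $3\xi|\bp-\bq|$, and we treat $a=0$ or $b=0$ separately.

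With the air region contributing $\nu_0$, Cauchy--Schwarz in $\L^2_1$ gives $|\langle\Acal(G)-\Acal(F),Z\rangle|\le\max\{3\xi,\nu_0\}\,\|\curl((G-F)\be_\theta)\|\,\|\curl(Z\be_\theta)\|$. Finally, the upper bound in Lemma~\ref{lemma:Htu} gives $\|\curl(Y\be_\theta)\|^2_{\L^2_1}\le 2\|Y\|^2_{\Htu}$. This produces a constant at most $2\max\{3\xi,\nu_0\}$, within the stated $\tilde\xi=4\max\{3\xi,\nu_0\}$.
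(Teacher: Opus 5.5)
Your proposal is correct and follows the same route as the paper. Both prove pointwise monotonicity and Lipschitz bounds for $\bp\mapsto\nu(|\bp|)\bp$, with constants $\varrho$ and $3\xi$ in $\OC$ and $\nu_0$ in $\OA$, integrate them against the weight $r$, and convert to the $\Htu$ norm with Lemma~\ref{lemma:Htu}. The differences are minor: you prove the vector inequalities directly where the paper cites Pechstein, you correctly state monotonicity through the real part as the complex setting requires, and you get the slightly sharper Lipschitz constant $2\max\{3\xi,\nu_0\}\le\tilde{\xi}$.
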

\begin{proof}
We first notice that, from \eqref{nuA}--\eqref{nuSM}  and by proceeding as in the proof of \cite[Lemma~2.8 and Lemma~2.9]{Pechstein}
it follows that $\nuc :\C^3\to\C^3$ is strongly monotone and Lipschitz continuous; in fact
\begin{equation}
\label{eq:mon_lc}
(\nuc(|\bp|)\bp-\nuc(|\bq|)\bq)\cdot(\bar{\bp}-\bar{\bq})\geq\varrho|\bp-\bq|^2,\quad
|\nuc(|\bp|)\bp-\nuc(|\bq|)\bq|\leq 3\xi|\bp-\bq|
\qquad \forall \bp,\bq \in \C^3.
\end{equation}
Thus, from the definition of $\nuc$ (cf.~\eqref{HnuB}), $\Acal$ and by taking $\bp=\curl(G\be_{\theta})$ and $\bq=\curl(Z\be_{\theta})$ in the previous equation we obtain
\begin{align*}
&\langle\Acal(G)-\Acal(Z),G-Z\rangle\\
&=\int_\O \left(\nu(|\curl(G\be_{\theta})|)\curl(G\be_{\theta})-\nu(|\curl(Z\be_{\theta})|) \curl(Z\be_{\theta})\right)\cdot\curl(\bar{G}\be_{\theta}-\bar{Z}\be_{\theta})r\drdz\\
&\geq \varrho\int_{\OC} |\curl(G\be_{\theta}-Z\be_{\theta})|^2r\drdz
+\nu_0\int_{\OA} |\curl(G\be_{\theta}-Z\be_{\theta})|^2r\drdz\\
&\geq \min\{\varrho,\nu_0\}\norm{\curl(G-Z)\be_{\theta}}{\Ldr}^2
=\min\{\varrho,\nu_0\}\left(\norm{\partial_r(r(G-Z))}{\Ldur}^2+\norm{\partial_z (G-Z)}{\Ldr}^2\right)\\
&\geq\frac{\min\{\varrho,\nu_0\}}{2\max\{1, R^2\}}\norm{G-Z}{\Htu}^2
\end{align*}
where, for the last inequality, we have applied \cref{lemma:Htu}.
Next, we apply \eqref{eq:mon_lc}  and \cref{lemma:Htu} again to prove that $\Acal$ is Lipschitz continuous
\begin{align*}
&|\langle\Acal(G)\!-\!\Acal(F),Z\rangle|
\leq \!\int_\O \!\!\left|\nu(|\curl(G\be_{\theta})|)\curl(G\be_{\theta})\!-\!\nu(|\curl(F\be_{\theta})|) \curl(F\be_{\theta})\right|\,|\curl(\bar{Z}\be_{\theta})|r\drdz\\
&\leq 3\xi\int_{\OC} |\curl(G\be_{\theta}-F\be_{\theta})|\,|\curl(\bar{Z}\be_{\theta})|r\drdz
+\nu_0\int_{\OA} |\curl(G\be_{\theta}-F\be_{\theta})|\,|\curl(\bar{Z}\be_{\theta})|r\drdz\\
&\leq \max\{3\xi,\nu_0\}\norm{\curl(G-F)\be_{\theta}}{\Ldr}\norm{\curl(Z)\be_{\theta}}{\Ldr}\\
&\leq 4\max\{3\xi,\nu_0\}\norm{G-F}{\Htu}\norm{Z}{\Htu}.
\end{align*}
Thus, the proof is complete.
\end{proof}

%As a consequence of the previous lemma it follows that
%\begin{equation}
%\label{AcalC}
%\langle\Acal(Z),Z\rangle\geq \tilde{\varrho} \norm{Z}{\Htu}^2\qquad \forall Z\in\Htu.
%\end{equation}
%-------------------------------------------------------------------------------------

Next, in order to study the existence of solution of the coupled problem, we begin by introducing the nonlinear
operator related to the electromagnetic problem: given
$S\in\L^2(0,\Tf;\Ldrc)$, we defined $\mathcal{H}$ as follows
\[
\begin{array}{ccccc}
&\mathcal{H}:&\L^2(0,\Tf;\Ldrc)&\longrightarrow&\L^2(0,\Tf;\HtuG)\\
& & S&\longmapsto&\mathcal{H}(S)\defeq \At^S
\end{array}
\]
where, a.e. $t\in (0,\Tf)$, $\At^S(t)$ solves, $\forall Z\in\HtuG$,
\begin{equation}\label{eq:pb_H}
i\omega\int_{\OC}\!\!\!\sigma(\cdot,S) \At^S(t)\bar{Z}r\drdz
+\langle\Acal(\At^S(t)),Z\rangle
=\sum_{k=1}^m\int_{\Ok}\!\!\! \sigma_k V_k\bar{Z}\drdz%\quad\forall Z\in\HtuG.
\end{equation}
The following lemma guarantees the well-posedness of the preceding problem.
\begin{lemma}\label{lemma:H_eub}
Let $\bV\in\C^m$. For $S\in\L^2(0,\Tf;\Ldrc)$ there exists a unique
$\At^S \in \L^2(0,\Tf;\HtuG)$ solution  of problem~\eqref{eq:pb_H}. Moreover
\begin{align*}
\norm{\At^S}{\L^2(0,\Tf;\Htu)}&\leq
C_c\norm{\bV}{\infty}
\end{align*}
where $C_c\defeq \scoil\, \tilde{\varrho}^{-1}\left(\sum_{k=1}^m \IOk \right)^{\frac12}\sqrt{\Tf}$ and $\norm{\bV}{\infty}\defeq \underset{1\leq k\leq m}{\max}\{|V_k|\}$.
\end{lemma}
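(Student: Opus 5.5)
Fix $t$ and apply the Browder--Minty theorem (the nonlinear Lax--Milgram lemma for strongly monotone, Lipschitz operators) on the closed subspace $\HtuG$. Then check measurability in $t$ and integrate the pointwise-in-time bound over $(0,\Tf)$.

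\textbf{Step 1 (pointwise problem).} For a.e.\ $t$, $S(t)\in\Ldrc$ is a fixed measurable function. Define $\mathcal{B}_t:\HtuG\to\HtuG'$ by
\[
\langle\mathcal{B}_t(G),Z\rangle:=i\omega\int_{\OC}\sigma(\cdot,S(t))G\bar Z r\drdz+\langle\Acal(G),Z\rangle .
\]
Since $\sigma$ is a Carath\'eodory function, $\sigma(\cdot,S(t))$ is measurable. By \eqref{hyp:sigma} and \eqref{hyp:sigmak} it lies between $0$ and $\max\{\sigma^*,\scoil\}$. The first term is therefore linear and bounded on $\Ldr$, hence on $\Htu$.

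We need strong monotonicity in the real sense: take real parts of $\langle\mathcal{B}_t(G)-\mathcal{B}_t(Z),G-Z\rangle$. The conductivity term gives $i\omega\int\sigma|G-Z|^2r$, which is purely imaginary and drops out. So \eqref{AcalSM} of \cref{lemma:Acal} gives $\mathrm{Re}\langle\mathcal{B}_t(G)-\mathcal{B}_t(Z),G-Z\rangle\geq\tilde\varrho\norm{G-Z}{\Htu}^2$. Lipschitz continuity follows from \eqref{AcalL} plus the bound on $\sigma$.

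The right-hand side $\ell(Z)=\sum_k\int_{\Ok}\sigma_kV_k\bar Z\drdz$ is a bounded antilinear functional. By Cauchy--Schwarz, $|\int_{\Ok}V_k\bar Z\drdz|\le|V_k|\,\IOk^{1/2}\norm{Z}{\L^2_{-1}(\Ok)}$. Writing $\drdz=r^{1/2}r^{-1/2}\drdz$, Cauchy--Schwarz a second time over the sum gives $|\ell(Z)|\le\scoil\norm{\bV}{\infty}(\sum_k\IOk)^{1/2}\norm{Z}{\Htu}$. The Browder--Minty theorem, or the Zarantonello contraction argument $G\mapsto G-\lambda J^{-1}(\mathcal{B}_t(G)-\ell)$ with small $\lambda$, then yields a unique $\At^S(t)\in\HtuG$.

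\textbf{Step 2 (a priori bound).} Test with $Z=\At^S(t)$ and take real parts. Since $\Acal(0)=0$, monotonicity gives $\tilde\varrho\norm{\At^S(t)}{\Htu}^2\le|\ell(\At^S(t))|$. This yields $\norm{\At^S(t)}{\Htu}\le\scoil\tilde\varrho^{-1}(\sum_k\IOk)^{1/2}\norm{\bV}{\infty}$, uniformly in $t$. Squaring and integrating over $(0,\Tf)$ gives the stated constant $C_c$ with its factor $\sqrt{\Tf}$.

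\textbf{Step 3 (measurability), the main obstacle.} We must show that $t\mapsto\At^S(t)$ is Bochner measurable, so that $\At^S\in\L^2(0,\Tf;\HtuG)$. I would show that the solution map $\Phi:\Ldrc\ni s\mapsto A(s)\in\HtuG$ is continuous.

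Take $s_n\to s$ in $\Ldrc$, and write $A_n=A(s_n)$, $A=A(s)$. Subtracting the two equations and testing with $A_n-A$ gives
\[
\tilde\varrho\norm{A_n-A}{\Htu}^2\le\omega\int_{\Ocero}|\sigma(\cdot,s_n)-\sigma(\cdot,s)|\,|A|\,|A_n-A|\,r\drdz .
\]
By \cref{lemma:emb_cont}, the right-hand side is at most $\omega\norm{(\sigma(\cdot,s_n)-\sigma(\cdot,s))A}{\Ldrc}\,C\norm{A_n-A}{\Htu}$. For any subsequence, pass to a further subsequence with $s_n\to s$ a.e.; then $\sigma(\cdot,s_n)\to\sigma(\cdot,s)$ a.e., and dominated convergence (dominant $4(\sigma^*)^2|A|^2$) sends that factor to $0$. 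Hence the whole sequence converges, and $\Phi$ is continuous.

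Since $S$ is strongly measurable, it is an a.e.\ limit of simple functions $S_j$. Each $\Phi\circ S_j$ is simple, and $\Phi\circ S_j\to\Phi\circ S$ a.e. So $\At^S=\Phi\circ S$ is strongly measurable, and uniqueness in $\L^2(0,\Tf;\HtuG)$ follows from the pointwise uniqueness of Step 1.
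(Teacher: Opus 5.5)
Your proposal is correct and follows the paper's route. Like the paper, you solve the problem for a.e.\ fixed $t$ via the main theorem on strongly monotone, Lipschitz operators (the paper cites Zeidler's Theorem~25.B), test with $\At^S(t)$ and take real parts to get a bound uniform in $t$, then integrate over $(0,\Tf)$. Beyond the paper, you explicitly check that the conductivity term drops out of the real part of the monotonicity pairing, and your Step~3 proves Bochner measurability of $t\mapsto\At^S(t)$ through continuity of the solution map on $\Ldrc$. The paper leaves both points implicit, so your version of the argument is more complete.
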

\begin{proof}
The existence result follows from \cite[Theorem 25.B]{zeidler2b} since the operator $\Acal$ is strongly monotone and Lipschitz continuous. In fact both, the strongly monotone and Lipschitz continuous  follows from \cref{lemma:Acal}.
To prove the estimate for $\At^S$, let us first notice that, as a consequence of \cref{lemma:Acal}
 we get
\[
\langle\Acal(Z),Z\rangle\geq \tilde{\varrho} \norm{Z}{\Htu}^2\qquad \forall Z\in\Htu.
\]
Thus, by taking $Z=\At^S$ in \eqref{eq:pb_H}, using the previous inequality and proceeding as in the  proof of \cite[Lemma~3.5]{GLSV1} it follows
\begin{align*}
\tilde{\varrho}\norm{\At^S}{\Htu}^2
&\leq \mathrm{Re}\left(i\omega\int_{\OC}\sigma(\cdot,S) |\At^S|^2r\drdz
+\langle\Acal(\At^S),\At^S\rangle\right)\\
&\leq \left|i\omega\int_{\OC}\sigma(\cdot,S) |\At^S|^2r\drdz
+\langle\Acal(\At^S),\At^S\rangle\right|
=\left|\sum_{k=1}^m\int_{\Ok}\sigma_k V_k\bar{\At^S}\drdz\right|\\
&\leq \scoil\norm{\bV}{\infty}\norm{\At^S}{\Lducoil}\left(\sum_{k=1}^m \IOk \right)^{\frac12},
\end{align*}
and thus
\begin{equation}
\label{cotaAt}
\norm{\At^S(t)}{\Htu}\leq \frac{\scoil}{\tilde{\varrho}}\Big(\sum_{k=1}^m \IOk \Big)^{\frac12}\norm{\bV}{\infty}\qquad\mbox{a.e. }t\in[0,\Tf].
\end{equation}
Now, integrating over $[0,\Tf]$ we obtain the estimate $\norm{\At^S}{\L^2(0,\Tf;\Htu)}$.
\end{proof}

As a consequence of the previous result it follows that the nonlinear operator $\mathcal{H}$
is well-defined and bounded. In particular, for all $S\in\L^2(0,\Tf;\Ldrc)$, $\mathcal{H}(S)$ satisfies
\begin{equation*}%\label{eq:est_H}
\norm{\mathcal{H}(S)}{\L^2(0,\Tf;\Htu)}\leq C_c\norm{\bV}{\infty} .
\end{equation*}

%-------------------------------------------------------------------------------------
Moreover,  operator $\mathcal{H}$ is continuous as stated in the following result.
\begin{lemma}\label{lemma:H_cont}
The mapping $\mathcal{H}:\L^2(0,\Tf;\Ldrc)\to\L^2(0,\Tf;\HtuG)$ is continuous.
\end{lemma}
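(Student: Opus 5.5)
Take a sequence $S_n\to S$ in $\L^2(0,\Tf;\Ldrc)$ and write $A_n\defeq\mathcal{H}(S_n)$, $A\defeq\mathcal{H}(S)$. The plan is to show $A_n\to A$ in $\L^2(0,\Tf;\HtuG)$, using the strong monotonicity of $\Acal$ from \cref{lemma:Acal}, the uniform bound \eqref{cotaAt}, and dominated convergence. It is enough to prove that every subsequence has a further subsequence along which $A_n\to A$. So, after passing to a subsequence, we may assume $S_n(t)\to S(t)$ in $\Ldrc$ for a.e. $t$, and then $S_n(t,\bx)\to S(t,\bx)$ for a.e. $(t,\bx)$.

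\textbf{Step 1 (pointwise-in-time error identity).} Subtract \eqref{eq:pb_H} for $S$ from the one for $S_n$ and test with $Z=A_n(t)-A(t)\in\HtuG$. This gives, for a.e. $t$,
\begin{equation*}
i\omega\int_{\OC}\sigma(\cdot,S_n)|A_n-A|^2r\drdz+\langle\Acal(A_n)-\Acal(A),A_n-A\rangle
=-i\omega\int_{\Ocero}\big(\sigma(\cdot,S_n)-\sigma(\cdot,S)\big)A\,(\bar A_n-\bar A)\,r\drdz .
\end{equation*}
In the coils $\sigma$ is constant and in the air it vanishes (cf.~\eqref{hyp:sigmak}), so the difference of conductivities only appears on $\Ocero$. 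The first term on the left is purely imaginary. Taking real parts and applying \eqref{AcalSM} yields
\begin{equation*}
\tilde\varrho\,\norm{A_n(t)-A(t)}{\Htu}^2\le \omega\int_{\Ocero}|\sigma(\cdot,S_n)-\sigma(\cdot,S)|\,|A|\,|A_n-A|\,r\drdz .
\end{equation*}

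\textbf{Step 2 (bounding the right-hand side).} Apply Hölder with exponents $(3,6,2)$ in the weighted measure $r\drdz$:
\begin{equation*}
\le \omega\,\norm{(\sigma(\cdot,S_n)-\sigma(\cdot,S))A}{\L^2_1(\Ocero)}\norm{A_n-A}{\L^2_1(\Ocero)} ,
\end{equation*}
and then use $\norm{\cdot}{\L^2_1}\le\norm{\cdot}{\Htu}$. This gives
\begin{equation*}
\tilde\varrho\,\norm{A_n(t)-A(t)}{\Htu}\le\omega\,\norm{(\sigma(\cdot,S_n(t))-\sigma(\cdot,S(t)))A(t)}{\Ldrc}.
\end{equation*}

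\textbf{Step 3 (passing to the limit).} This is the main point: we have no Lipschitz continuity of $\sigma$, only the Carathéodory property and \eqref{hyp:sigma}.
\begin{itemize}
\item Since $\sigma(\bx,\cdot)$ is continuous, the integrand $|\sigma(\cdot,S_n)-\sigma(\cdot,S)|^2|A|^2$ tends to $0$ a.e. in $(0,\Tf)\times\Ocero$.
\item By \eqref{hyp:sigma} it is dominated by $4(\sigma^*)^2|A|^2$.
\item This dominating function is integrable over $(0,\Tf)\times\Ocero$ with weight $r$, because $A\in\L^2(0,\Tf;\Htu)$ by \cref{lemma:H_eub}.
\end{itemize}
Dominated convergence in $(t,\bx)$ then gives $\norm{(\sigma(\cdot,S_n)-\sigma(\cdot,S))A}{\L^2(0,\Tf;\Ldrc)}\to0$. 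Squaring the estimate of Step 2 and integrating over $(0,\Tf)$ yields $A_n\to A$ in $\L^2(0,\Tf;\HtuG)$ along the subsequence.

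Since the limit $A$ does not depend on the subsequence chosen, the whole sequence converges, and $\mathcal{H}$ is continuous.
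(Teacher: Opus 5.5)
Your proof is correct and has the same skeleton as the paper's. Both subtract the two problems, test with $A_n-A$, take real parts, apply \eqref{AcalSM}, and finish with dominated convergence. The one real difference is how the right-hand side is bounded.

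The paper splits it by H\"older as $\norm{\sigma(\cdot,S_\ell)-\sigma(\cdot,S)}{\Ldrc}\norm{A^S}{\L^6_1(\Ocero)}\norm{A^\ell-A^S}{\L^3_1(\Ocero)}$. It then uses the weighted embedding of \cref{lemma:emb_cont} and the uniform-in-time bound \eqref{cotaAt}. This gives the quantitative estimate $\norm{A^\ell-A^S}{\L^2(0,\Tf;\Htu)}\lesssim\norm{\sigma(\cdot,S_\ell)-\sigma(\cdot,S)}{\L^2(0,\Tf;\Ldrc)}$. Under $\mathbf{H}_\sigma$ that becomes a Lipschitz bound in $S$, and it is exactly what is reused later in \eqref{eq:A_Al} and \eqref{etanh}.

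You instead keep the product $(\sigma(\cdot,S_n)-\sigma(\cdot,S))A$ together. That step is plain Cauchy--Schwarz, not the $(3,6,2)$ H\"older split you name. You then apply dominated convergence to the product with majorant $4(\sigma^*)^2|A|^2r$. This needs neither the embedding nor \eqref{cotaAt} (which you announce in your plan but never use), only $A\in\L^2(0,\Tf;\Ldrc)$. It is more elementary, but it yields only qualitative continuity.

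You are also more careful than the paper about the subsequence that dominated convergence requires. One small correction there: a.e.-in-$t$ convergence $S_n(t)\to S(t)$ in $\Ldrc$ does not by itself give a.e. convergence in $(t,\bx)$. Extract the a.e.-convergent subsequence directly from $S_n\to S$ in $\L^2$ of $(0,\Tf)\times\Ocero$ with the measure $r\,dt\drdz$.
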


\begin{proof}
Let $S\in\L^2(0,\Tf;\Ldrc)$ and $\{ S_\ell \}_{\ell\in \N} \subset\L^2(0,\Tf;\Ldrc)$ be a sequence such that $S_\ell\to S$ strongly in $\L^2(0,\Tf;\Ldrc)$. To prove that $\mathcal{H}$ is continuous, we show that the sequence $\{\At^\ell\}_{\ell\in \N}\defeq\{\mathcal{H}(S_\ell)\}_{\ell\in \N}$ converges strongly to $\At^S\defeq\mathcal{H}(S)$ in $\L^2(0,\Tf;\HtuG)$. First we notice that $\At^S$
and $\At^\ell$  satisfy
\begin{align*}
i\omega\int_{\OC}\sigma(\cdot,S) \At^S\bar{Z}r\drdz
+\langle\Acal(\At^S),Z\rangle
=\sum_{k=1}^m\int_{\Ok}\sigma_k V_k\bar{Z}\drdz&&\forall Z\in\HtuG,\\
i\omega\int_{\OC}\sigma(\cdot,S_\ell) \At^\ell\bar{Z}r\drdz
+\langle\Acal(\At^\ell),Z\rangle
=\sum_{k=1}^m\int_{\Ok}\sigma_k V_k\bar{Z}\drdz&&\forall Z\in\HtuG.
\end{align*}
Thus, by subtracting the previous equations and from  the  assumptions on $\sigma$  (cf. \eqref{hyp:sigmak}) we get
\[
i\omega\!\!\int_{\OC} \!\!\! \sigma(\cdot,S_\ell)(\At^\ell-\At^S)\bar{Z}r\drdz
+\langle\Acal(\At^\ell)-\Acal(\At^S),Z\rangle
=-i\omega\!\!\int_{\Ocero}\!\!\!(\sigma(\cdot,S_\ell)-\sigma(\cdot,S))\At^S\bar{Z}r\drdz
\]
for all $Z\in\HtuG$. By taking $Z=\At^\ell-\At^S$ in the previous equation, using \cref{lemma:Acal} (cf. \eqref{AcalSM}) and  by applying H\"older's inequality we
arrive at
\begin{equation}\label{eq:As_Al}
\begin{aligned}
\tilde{\xi}\norm{\At^\ell-\At^S}{\Htu}^2
&\leq
\mathrm{Re}\Big(i\omega\int_{\OC}\sigma(\cdot,S_\ell)|\At^\ell-\At^S|^2r\drdz
+\langle\Acal(\At^\ell)-\Acal(\At^S),\At^\ell-\At^S\rangle\Big)\\
&\leq \Big|i\omega\int_{\Ocero}(\sigma(\cdot,S_\ell)-\sigma(\cdot,S))\At^S\,\overline{\At^\ell-\At^S}r\drdz\Big|\\
&\leq \omega
\Big(\int_{\Ocero}|\sigma(\cdot,S_\ell)-\sigma(\cdot,S)|^2r\drdz\Big)^{\frac12}
\Big(\int_{\Ocero}|\At^S\,(\overline{\At^\ell-\At^S})|^2r\drdz\Big)^{\frac12}\\[2mm]
&\leq\omega\norm{\sigma(\cdot,S_\ell)-\sigma(\cdot,S)}{\Ldrc}\norm{\At^S}{\L^6_1(\Ocero)}\norm{\At^\ell-\At^S}{\L^3_1(\Ocero)}\\
&\leq \omega\,C_e^2\IOcero^{\frac16}C_c\norm{\bV}{\infty}\norm{\sigma(\cdot,S_\ell)-\sigma(\cdot,S)}{\Ldrc}\norm{\At^\ell-\At^S}{\H^1_1(\Ocero)}
\end{aligned}
\end{equation}
where, to obtain the last inequality, we have applied the continuous embedding $\H^1_1(\Ocero) \subset\L^6_1(\Ocero)$ (cf.~\cref{lemma:emb_cont}), the fact that $\norm{W}{\L^3_1(\Ocero)}\leq
\left(\int_{\Ocero}r\drdz\right)^{\frac16}\norm{W}{\L^6_1(\Ocero)}=\IOcero^{\frac16}\norm{W}{\L^6_1(\Ocero)}$ for all $W\in \L^6_1(\Ocero)$ and \eqref{cotaAt}.
Thus, by integrating \eqref{eq:As_Al} over $[0,\Tf]$ it follows that
\[
\int_0^\Tf\norm{\At^\ell(t)-\At^S(t)}{\Htu}^2\dt
\leq \Big(\tilde{\xi}^{-1}\omega\,C_e^2\IOcero^{\frac16}C_c\norm{\bV}{\infty}\Big)^2\int_0^\Tf\norm{\sigma(\cdot,S_\ell(t))-\sigma(\cdot,S(t))}{\Ldrc}^2\dt.
\]
Finally, considering that $\sigma$ is a Carath\'eodory function, by virtue of Lebesgue's dominated convergence theorem we get $\norm{\sigma(\cdot,S_\ell)-\sigma(\cdot,S)}{\L^2(0,\Tf;\Ldrc)}\to 0$ and from the previous estimate we conclude that $\At^\ell\to\At^S$ strongly in $\L^2(0,\Tf;\HtuG)$. Thus we conclude the proof.
\end{proof}

%-------------------------------------------------------------------------------------
% To proceed, we focus on the thermal problem. With this in mind, we consider the following result,
% which will be used in the subsequent discussion.
We next consider the thermal problem. The following result will be used below.

\begin{lemma}\label{lemma:Q_exist}
Let $\bV\in\C^m$, $T_c\in\L^2(0,\Tf;\LdGur)$ and $T_0\in\Ldrc$. For $S\in\L^2(0,\Tf;\Ldrc)$ there exists a unique
$\Sht\in\L^2(0,\Tf;\HuT)\cap\H^1( 0,\Tf;\Ldrc)$ such that
% \begin{align}
% \int_{\Ocero} \partial_t & \Sht W r\drdz
% +\int_{\Ocero}\kappa(\cdot,S)\grad\Sht\cdot\grad Wr\drdz
% +\int_{\widehat{\Gamma}_1}\eta\,\Sht Wr\dl\nonumber\\
% &=\frac12 \int_{\Ocero}\omega^2\sigma(\cdot,S)|\mathcal{H}(S)|^2Wr\drdz
% +\int_{\widehat{\Gamma}_1}\eta\,T_cWr\dl
% \qquad \forall W\in\HuT\quad\text{in }\D'( 0,\Tf),\label{eq:calQa}\\
% \Sht(0)&=T_0\qquad \text{in }\Ocero.\label{eq:calQb}
% \end{align}
%
\begin{align}
\int_{\Ocero} \partial_t \Sht\, W\, r \,\drdz
&+\int_{\Ocero}\kappa(\cdot,S)\grad \Sht\cdot\grad W\, r \,\drdz
+\int_{\widehat{\Gamma}_1}\eta\,\Sht\,W\,r\,\dl \nonumber\\
&= \frac12 \int_{\Ocero}\omega^2\sigma(\cdot,S)|\mathcal{H}(S)|^2\,W\,r\,\drdz
+\int_{\widehat{\Gamma}_1}\eta\,T_c\,W\,r\,\dl,
\label{eq:calQa}\\
&\hspace{8em}\forall W\in\HuT \quad \text{in }\D'(0,\Tf),
\nonumber\\
\Sht(0)&=T_0 \qquad \text{in }\Ocero .
\label{eq:calQb}
\end{align}
Moreover, there exists $\hat{C}>0$ such that
\[
\norm{\Sht}{\L^\infty(0,\Tf;\Ldrc)}+\norm{\Sht}{\L^2(0,\Tf;\HuT)}
\leq \hat{C}\left(\norm{T_0}{\Ldrc}+\norm{\bV}{\infty}^2+\norm{T_c}{\L^2(0,\Tf;\LdGur)}\right)
\]
\end{lemma}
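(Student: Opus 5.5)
The plan is a Faedo--Galerkin construction in the weighted setting, carried out on the frozen linear parabolic problem. The coefficients $\kappa(\cdot,S)$ and $\sigma(\cdot,S)$ are fixed measurable functions of $(t,\bx)$, bounded above and below by \eqref{hyp:eta}--\eqref{hyp:sigma}.

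\emph{Step 1: the source.} Set $f_S:=\frac{\omega^2}{2}\sigma(\cdot,S)|\mathcal{H}(S)|^2$. By \cref{lemma:emb_cont} and \eqref{cotaAt}, for a.e.\ $t$,
\[
\norm{|\mathcal{H}(S)(t)|^2}{\Ldrc}=\norm{\mathcal{H}(S)(t)}{\L^4_1(\Ocero)}^2\le \IOcero^{1/6}C_e^2\,\frac{\scoil^2}{\tilde\varrho^2}\Big(\sum_k\IOk\Big)\norm{\bV}{\infty}^2 .
\]
Hence $f_S\in\L^\infty(0,\Tf;\Ldrc)$, with norm bounded by a constant times $\norm{\bV}{\infty}^2$. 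Measurability in $t$ follows from \cref{lemma:H_eub}.

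\emph{Step 2: energy estimate and existence.} Take a basis of $\HuT$ that is orthonormal in $\Ldrc$ and project $T_0$ onto it. The Galerkin ODE system has Carath\'eodory coefficients, so it has an absolutely continuous solution $\Sht_m$. Test with $\Sht_m$ and use $\kappa\ge\kappa_*$, $\eta\ge\eta_*$, \cref{lemma:Poincare}, \cref{lemma:Traza} for the $T_c$ term, and Young's inequality. This gives
\[
\frac{d}{dt}\norm{\Sht_m}{\Ldrc}^2+C_p\min\{\kappa_*,\eta_*\}\norm{\Sht_m}{\HuT}^2\le C\big(\norm{f_S}{\Ldrc}^2+\norm{T_c}{\LdGur}^2\big)+\norm{\Sht_m}{\Ldrc}^2 .
\]
Gronwall's lemma then yields the stated bound uniformly in $m$; the $\norm{\bV}{\infty}^2$ comes from Step~1. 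Because the equation is linear, weak-$*$ limits in $\L^\infty(0,\Tf;\Ldrc)$ and weak limits in $\L^2(0,\Tf;\HuT)$ pass to the limit in \eqref{eq:calQa}. The equation also bounds $\partial_t\Sht$ in $\L^2(0,\Tf;\HuT')$, which gives $\Sht\in\Ccal([0,\Tf];\Ldrc)$ and the meaning of \eqref{eq:calQb}.

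\emph{Step 3: uniqueness.} Take the difference of two solutions, test with it, and use the same coercivity and Gronwall argument with zero data.

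\emph{Step 4: $\H^1(0,\Tf;\Ldrc)$ regularity.} This is where I expect the main obstacle. The natural route is to test the Galerkin system with $\partial_t\Sht_m$, which gives
\[
\norm{\partial_t\Sht_m}{\Ldrc}^2+\int_{\Ocero}\kappa(\cdot,S)\grad\Sht_m\cdot\grad\partial_t\Sht_m\,r\drdz+\frac12\frac{d}{dt}\int_{\Gtu}\eta|\Sht_m|^2r\dl=(f_S,\partial_t\Sht_m)+\int_{\Gtu}\eta T_c\,\partial_t\Sht_m\,r\dl .
\]
The second term equals $\frac12\frac{d}{dt}\int\kappa|\grad\Sht_m|^2r$ only up to a term involving $\partial_t\kappa(\cdot,S)$, and $S$ is merely $\L^2$ in time. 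The boundary term needs $\partial_t T_c$, and the whole estimate needs $\Sht_m(0)$ bounded in $\HuT$.

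I would first isolate the regime in which the estimate closes directly: $\kappa$ independent of the temperature argument, $T_0\in\HuT$, and $T_c\in\H^1(0,\Tf;\LdGur)$. There Step~1 controls $(f_S,\partial_t\Sht_m)$ by Young's inequality, the $T_c$ term is moved onto $\partial_t T_c$ after integration by parts in time, and integrating over $(0,\Tf)$ bounds $\partial_t\Sht_m$ in $\L^2(0,\Tf;\Ldrc)$.

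For general data, my fallback is a weighted-in-time estimate: test with $t\,\partial_t\Sht_m$ to get $\sqrt t\,\partial_t\Sht\in\L^2(0,\Tf;\Ldrc)$, combined with a time-mollification of $\kappa(\cdot,S)$ whose commutator is controlled in $\L^2(0,\Tf;\HuT')$.

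I flag honestly that with only $T_0\in\Ldrc$ and $T_c\in\L^2(0,\Tf;\LdGur)$, the bound $\partial_t\Sht\in\L^2(0,\Tf;\Ldrc)$ cannot hold without extra data regularity: already for the linear heat equation, $T_0\in\L^2$ alone gives only $\partial_t T\in\L^2(0,\Tf;(\H^1)')$. So Step~4 is the point where the plan either needs those additional hypotheses or must settle for the weaker time-derivative class.
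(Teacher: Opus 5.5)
Your Steps~1--3 are the paper's proof with the citation unpacked. The paper gets existence and uniqueness from classical linear parabolic theory (Dautray--Lions, Ch.~XVIII), using coercivity from \cref{lemma:Poincare} and controlling the Robin term with \cref{lemma:Traza}. It then tests \eqref{eq:calQa} with $\Sht(t)$. The only difference is in how the Joule term is handled. The paper bounds it by duality against $\norm{\Sht(t)}{\HuT}$ via the $\L^6_1$ embedding and integrates in time directly. You bound $f_S$ in $\L^\infty(0,\Tf;\Ldrc)$ and apply Gronwall. Both give the stated estimate.

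Your objection in Step~4 is correct, and it is a flaw in the statement, not in your plan. The paper gives no argument for $\Sht\in\H^1(0,\Tf;\Ldrc)$ beyond that citation, and the cited theory only yields $\dpt\Sht\in\L^2(0,\Tf;(\HuT)')$, which is exactly your Step~2.

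You can make the objection airtight within the lemma's own hypotheses. Take $\bV=0$ (so $\mathcal{H}(S)=0$), $T_c=0$ and $\kappa\equiv\kappa_*$. Let $\mathcal{B}$ be the self-adjoint operator on $\Ldrc$ associated with the closed symmetric form $b$ (with $\kcte\equiv\kappa_*$). Its form domain is $\HuT$, and its spectrum lies in $[\lambda_0,\infty)$ with $\lambda_0\geq C_p\min\{\kappa_*,\eta_*\}>0$. Then $\Sht(t)=e^{-t\mathcal{B}}T_0$, and, with $\mu_{T_0}$ the spectral measure of $T_0$,
\[
\int_0^{\Tf}\norm{\dpt\Sht(t)}{\Ldrc}^2\,dt=\int_{[\lambda_0,\infty)}\frac{\lambda}{2}\bigl(1-e^{-2\lambda\Tf}\bigr)\,d\mu_{T_0}(\lambda),
\]
which is finite if and only if $T_0\in D(\mathcal{B}^{1/2})=\HuT$. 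Uniqueness holds in the larger class, so for $T_0\in\Ldrc\setminus\HuT$ no solution with the claimed regularity exists.

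The lemma should therefore be stated with $\H^1(0,\Tf;(\HuT)')$ in place of $\H^1(0,\Tf;\Ldrc)$, reading the first term of \eqref{eq:calQa} as a duality pairing. That is precisely what your Steps~1--3 establish. It also suffices for the Schauder argument in the existence theorem, because $\L^2(0,\Tf;\HuT)\cap\H^1(0,\Tf;(\HuT)')$ still embeds compactly in $\L^2(0,\Tf;\Ldrc)$ by Aubin--Lions.

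Of your two routes in Step~4, the first works. Under $\mathbf{H}_\kappa$ with $T_0\in\HuT$ and $T_c\in\H^1(0,\Tf;\LdGur)$ (the data assumptions of the paper's later error analysis), the test with $\dpt\hat{S}_{\theta,m}$ closes. You need one adjustment: take the Galerkin basis to be eigenfunctions of $b$, so that the $\Ldrc$-projection of $T_0$ is also bounded in $\HuT$.

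The fallback does not work. Testing with $t\,\dpt\hat{S}_{\theta,m}$ leaves the term $\frac{t}{2}\int_{\Ocero}\dpt\kappa(\cdot,S)\,|\grad\hat{S}_{\theta,m}|^2r\drdz$. Any time-mollification of $\kappa(\cdot,S)$ makes its bound non-uniform in the mollification parameter. Moreover, $S\in\L^2(0,\Tf;\Ldrc)$ gives no time regularity with which to control it. Even if it closed, it would only give a weighted bound, not the unweighted $\H^1(0,\Tf;\Ldrc)$ one, so drop it.
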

\begin{proof}
The well-posedness of problem \eqref{eq:calQa}--\eqref{eq:calQb} follows from the Poincar\'e-type inequality and trace
inequality in weighted Sobolev given in \cref{lemma:Poincare,lemma:Traza}, respectively, and the classical
theory for parabolic problems (see, for instance, \cite[Chapter~XVIII]{DL}). Now, by taking $W=\Sht(t)$ in
\eqref{eq:calQa}, using  \cref{lemma:Poincare,lemma:Traza}, H\"older's inequality and the fact that $\Huc
\hookrightarrow \L^6_1(\Ocero)$ (cf. \cref{lemma:emb_cont}) and  $\norm{W}{\L^{\frac{12}{5}}_1(\Ocero)}\leq
\left(\int_{\Ocero}r\drdz\right)^{\frac14}\norm{W}{\L^6_1(\Ocero)}$ for all $W\in \L^6_1(\Ocero)$, we obtain
\begin{align*}
\frac{1}{2}\frac{d}{dt}&\norm{\Sht(t)}{\Ldrc}^2
+C_p\min\{\kappa_*,\eta_*\}\norm{\Sht(t)}{\Huc}^2
\\
&\leq \frac{1}{2\Tf}\omega^2\sigma^*C_e^3C_c^2\IOcero^\frac{1}{2}\norm{\bV}{\infty}^2\norm{\Sht(t)}{\Huc}
+\eta^*C_{\mathrm{tr}}\norm{T_c}{\LdGur} \norm{\Sht(t)}{\Huc}\\
& \leq \frac{(C_p\min\{\kappa_*,\eta_*\})^{-1}}{4\Tf^2}\omega^4(\sigma^*)^2C_e^6C_c^4\IOcero\norm{\bV}{\infty}^4
+(C_p\min\{\kappa_*,\eta_*\})^{-1}(\eta^*)^2C_{\mathrm{tr}}^2\norm{T_c(t)}{\LdGur}^2\\
&\quad +\frac{1}{2}C_p\min\{\kappa_*,\eta_*\}\norm{\Sht(t)}{\Huc}^2
\end{align*}
integrating over $[0,t]$, we get
\begin{multline*}
\norm{\Sht(t)}{\Ldrc}^2-\norm{\Sht(0)}{\Ldrc}^2+\int_0^t \norm{\Sht(t)}{\Huc}^2\dt\\
\leq \frac{2(C_p\min\{\kappa_*,\eta_*\})^{-1}}{\min\{1,C_p\min\{\kappa_*,\eta_*\}\}}
\Big(
\frac{1}{4\Tf}\omega^4(\sigma^*)^2C_e^6C_c^4\IOcero\norm{\bV}{\infty}^4
+(\eta^*)^2C_{\mathrm{tr}}^2\int_0^t\norm{T_c(t)}{\LdGur}^2\dt
\Big).
\end{multline*}
Thus, we conclude the proof.
\end{proof}

As a consequence of the previous result, we introduce a nonlinear operator associated with the thermal problem. As in the electromagnetic case, this operator depends on the temperature. More precisely, for $S\in\L^2(0,\Tf;\Ldrc)$, we define a nonlinear
mapping $\mathcal{Q}$ by
$$
\begin{array}{cccll}
&\mathcal{Q}:&\L^2(0,\Tf;\Ldrc)&\longrightarrow&\L^2(0,\Tf;\HuT)\cap\H^1( 0,\Tf;\Ldrc)\\
& & S&\longmapsto&\mathcal{Q}(S)\defeq \Sht
\end{array}
$$
where $\Sht\in\L^2(0,\Tf;\HuT)$ is solution of problem \eqref{eq:calQa}--\eqref{eq:calQb}.
The well-posedness  of $\mathcal{Q}$ follows from \cref{lemma:Q_exist}; its
continuity is proved in the next lemma.
\begin{lemma}\label{lemma:Q_cont}
The mapping $\mathcal{Q}:\L^2(0,\Tf;\Ldrc)\to\L^2(0,\Tf;\HuT)\cap\H^1( 0,\Tf;\Ldrc)$ is continuous.
\end{lemma}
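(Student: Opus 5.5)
The proof works with the difference $\delta_\ell\defeq\mathcal{Q}(S_\ell)-\mathcal{Q}(S)$. Let $S_\ell\to S$ strongly in $\L^2(0,\Tf;\Ldrc)$, and write $\At^\ell\defeq\mathcal{H}(S_\ell)$, $\At^S\defeq\mathcal{H}(S)$, $\Sht^\ell\defeq\mathcal{Q}(S_\ell)$ and $\Sht\defeq\mathcal{Q}(S)$. It suffices to prove that every subsequence has a further subsequence along which $\delta_\ell\to0$ in $\L^2(0,\Tf;\HuT)\cap\H^1(0,\Tf;\Ldrc)$. So we may pass to a subsequence with $S_\ell\to S$ a.e. in $(0,\Tf)\times\Ocero$.

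Subtracting the two instances of \eqref{eq:calQa}, $\delta_\ell$ satisfies $\delta_\ell(0)=0$ and, for all $W\in\HuT$,
\begin{multline*}
\int_{\Ocero}\partial_t\delta_\ell\,W r\drdz+\int_{\Ocero}\kappa(\cdot,S_\ell)\grad\delta_\ell\cdot\grad W r\drdz+\int_{\Gtu}\eta\,\delta_\ell W r\dl\\
=\frac{\omega^2}{2}\int_{\Ocero}\big(\sigma(\cdot,S_\ell)|\At^\ell|^2-\sigma(\cdot,S)|\At^S|^2\big)W r\drdz
-\int_{\Ocero}\big(\kappa(\cdot,S_\ell)-\kappa(\cdot,S)\big)\grad\Sht\cdot\grad W r\drdz .
\end{multline*}
Call the two right-hand terms $\langle F_\ell,W\rangle$ and $\langle G_\ell,W\rangle$.

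\emph{Step 1: the forcing terms tend to zero.} Split the Joule difference as
\[
\sigma(\cdot,S_\ell)\big(|\At^\ell|^2-|\At^S|^2\big)+\big(\sigma(\cdot,S_\ell)-\sigma(\cdot,S)\big)|\At^S|^2 .
\]
\begin{itemize}
\item For the first piece, use H\"older with exponents $(2,3,6)$, \cref{lemma:emb_cont}, the uniform bound \eqref{cotaAt} and \cref{lemma:H_cont}. This gives convergence to zero in $\L^2(0,\Tf;\HuT')$.
\item For the second piece, $|\At^S|^2\in\L^\infty(0,\Tf;\L^3_1(\Ocero))$. Since $\sigma$ is Carath\'eodory and bounded, dominated convergence gives convergence to zero in the same dual norm.
\item The term $G_\ell$ is bounded by $\norm{(\kappa(\cdot,S_\ell)-\kappa(\cdot,S))\grad\Sht}{\L^2(0,\Tf;\Ldrc)}$. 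This tends to zero by dominated convergence, with majorant $2\kappa^*|\grad\Sht|$.
\end{itemize}

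\emph{Step 2: convergence in $\L^2(0,\Tf;\HuT)$.} Test with $W=\delta_\ell(t)$ and use \cref{lemma:Poincare,lemma:Traza} exactly as in \cref{lemma:Q_exist}. Young's inequality and integration in time then give
\[
\norm{\delta_\ell}{\L^\infty(0,\Tf;\Ldrc)}^2+\norm{\delta_\ell}{\L^2(0,\Tf;\HuT)}^2\le C\big(\norm{F_\ell}{\L^2(0,\Tf;\HuT')}^2+\norm{G_\ell}{\L^2(0,\Tf;\HuT')}^2\big)\to0 .
\]

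\emph{Step 3: convergence of $\partial_t\delta_\ell$ in $\L^2(0,\Tf;\Ldrc)$.} This is the main obstacle. The equation only controls $\partial_t\delta_\ell$ in the dual norm. Testing with $\partial_t\delta_\ell$ is problematic, because $\kappa(\cdot,S_\ell)$ depends on time only through an $\L^2$ function. Moreover, the Joule source is merely in $\L^3_1$-type spaces and not in $\L^2$. I would argue in three stages.
\begin{itemize}
\item First, use the membership $\Sht^\ell\in\H^1(0,\Tf;\Ldrc)$ guaranteed by \cref{lemma:Q_exist} to obtain a bound on $\norm{\partial_t\Sht^\ell}{\L^2(0,\Tf;\Ldrc)}$ that is uniform in $\ell$. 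Concretely, I would test with $\partial_t\Sht^\ell$ on a Galerkin approximation, writing the diffusion term as $\frac12\frac{d}{dt}\int\kappa|\grad\Sht^\ell|^2r$ up to a remainder, and absorb that remainder using the structure of the data.
\item Second, extract a weakly convergent subsequence of $\partial_t\delta_\ell$ in $\L^2(0,\Tf;\Ldrc)$. Its limit must be $0$, because Step 2 forces $\partial_t\delta_\ell\to0$ in the sense of distributions.
\item Third, upgrade to strong convergence. Compare the energy identities for $\Sht^\ell$ and $\Sht$: pass to the limit in $\int_0^\Tf\norm{\partial_t\Sht^\ell}{\Ldrc}^2$ using Steps 1--2, and conclude by weak convergence plus convergence of norms.
\end{itemize}
If the uniform bound in the first stage fails for the stated data, I would first try to obtain it under $T_0\in\HuT$, which is where the argument is most likely to require care.
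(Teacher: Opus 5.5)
\textbf{Steps 1--2 are correct.} They already give everything the paper's own argument gives, namely $\delta_\ell\to0$ in $\L^\infty(0,\Tf;\Ldrc)\cap\L^2(0,\Tf;\HuT)$. You pair the $\kappa$-mismatch with the fixed $\grad\Sht$ and measure the Joule mismatch in $\L^2(0,\Tf;\HuT')$. This lets you absorb everything with Young's inequality and skip the paper's detour, which first extracts a weak limit and identifies it through \eqref{eq:calQf}.

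\textbf{The gap is Step~3, the $\H^1(0,\Tf;\Ldrc)$ component, and the mechanism you sketch cannot work.}
\begin{itemize}
\item Testing with $\dpt\Sht^\ell$ leaves the remainder $-\frac12\int_{\Ocero}\dpt[\kappa(\cdot,S_\ell)]\,|\grad\Sht^\ell|^2r\drdz$. But $t\mapsto\kappa(\cdot,S_\ell(t))$ has no time derivative when $S_\ell$ is only in $\L^2(0,\Tf;\Ldrc)$. This is exactly the difficulty you flagged for $\delta_\ell$, and moving to $\Sht^\ell$ does not remove it.
\item The initial energy $\int_{\Ocero}\kappa|\grad T_0|^2r\drdz$ is undefined for general $T_0\in\Ldrc$.
\item The boundary term $\int_{\Gtu}\eta\,T_c\,\dpt\Sht^\ell\,r\dl$ would need $\dpt T_c$.
\end{itemize}
In fact no such bound holds for the stated data. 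Take $\bV=0$, $T_c=0$ and constant $\kappa$. Then $\Sht$ is the Robin heat semigroup applied to $T_0$, and its time derivative lies in $\L^2(0,\Tf;\Ldrc)$ only if $T_0\in\HuT$. So the membership $\Sht\in\H^1(0,\Tf;\Ldrc)$ you take from the existence lemma is not available for $T_0\in\Ldrc$. Your third stage needs an energy identity for $\int_0^\Tf\norm{\dpt\Sht^\ell}{\Ldrc}^2\dt$, which runs into the same missing $\dpt\kappa$. Also, the Joule term is not the obstruction: since $\IOcero<\infty$, one has $\L^3_1(\Ocero)\subset\Ldrc$.

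\textbf{The paper does not close this component either.} Its energy estimate for $\Sht^\ell-\Sht$ yields exactly your Step~2 conclusion and says nothing about $\dpt(\Sht^\ell-\Sht)$ in $\L^2(0,\Tf;\Ldrc)$.

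\textbf{What your argument does give.} Read the difference equation in $\HuT'$:
\[
\norm{\dpt\delta_\ell}{\HuT'}\lesssim\norm{\delta_\ell}{\HuT}+\norm{F_\ell}{\HuT'}+\norm{G_\ell}{\HuT'} .
\]
Hence $\dpt\delta_\ell\to0$ in $\L^2(0,\Tf;\HuT')$, so $\mathcal{Q}$ is continuous into $\L^2(0,\Tf;\HuT)\cap\H^1(0,\Tf;\HuT')$. By Aubin--Lions this space embeds compactly into $\L^2(0,\Tf;\Ldrc)$, which is what the Schauder argument actually needs.

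\textbf{If you want $\Ldrc$-valued time derivatives}, work only with the difference. There $T_0$ and $T_c$ cancel and $\delta_\ell(0)=0$, so no bound on $\dpt\Sht^\ell$ itself is needed.
\begin{itemize}
\item Under $\mathbf{H}_\kappa$, $G_\ell=0$ and $F_\ell\to0$ in $\L^2(0,\Tf;\Ldrc)$. Testing with $\dpt\delta_\ell$ (legitimate by maximal regularity for the time-independent symmetric form $b$) gives $\norm{\dpt\delta_\ell}{\L^2(0,\Tf;\Ldrc)}\le\norm{F_\ell}{\L^2(0,\Tf;\Ldrc)}\to0$.
\item For a general Carath\'eodory $\kappa$, $G_\ell$ lies only in $\L^2(0,\Tf;\HuT')$ and the principal part is too rough in time, so no such estimate is available.
\end{itemize}
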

\begin{proof}
First, let us introduce a formulation equivalent of problem \eqref{eq:calQa}--\eqref{eq:calQb}. For
$S\in\L^2(0,\Tf;\Ldrc)$ find $\Sht\in\L^2(0,\Tf;\HuT)\cap\H^1( 0,\Tf;\Ldrc)$ such that for all
$W\in\L^2(0,\Tf;\HuT)\cap\H^1( 0,\Tf;\Ldrc)$ with $W(\Tf)=0$ the following holds

\begin{multline}
\label{eq:calQf}
-\int_0^{\Tf}\int_{\Ocero} \Sht(t) \partial_t W(t) r\drdz\dt
+\int_0^{\Tf}\int_{\Ocero}\kappa(\cdot,S)\grad\Sht(t)\cdot\grad W(t)r\drdz\dt
\\
+\int_0^{\Tf}\int_{\widehat{\Gamma}_1}\eta\,\Sht(t) W(t) r\dl\dt
=\int_0^{\Tf}\int_{\Ocero}\frac12\omega^2\sigma(\cdot,S)|\mathcal{H}(S)(t)|^2W(t)r\drdz\dt
\\
+\int_0^{\Tf}\int_{\widehat{\Gamma}_1}\eta\,T_c(t)W(t)r\dl\dt
+\int_{\Ocero} T_0 W(0) r\drdz.
\end{multline}
Let $S\in\L^2(0,\Tf;\Ldrc)$ be a sequence such that $S_\ell\to S$ strongly in $\L^2(0,\Tf;\Ldrc)$. Let
$\Sht^\ell\defeq\mathcal{Q}(S_\ell)$. We prove that $\mathcal{Q}(S_\ell)\to\mathcal{Q}(S)$ strongly in
$\L^2(0,\Tf;\HuT)\cap\H^1( 0,\Tf;\Ldrc)$. With this aim, we first need to prove that $\Sht^\ell\to\mathcal{Q}(S)$ weakly in
$\L^2(0,\Tf;\HuT)$. Since $\{\Sht^\ell\}_{\ell\in \N}$ is uniformly bounded in
$\L^2(0,\Tf;\HuT)\cap\L^\infty( 0,\Tf;\Ldrc)$ (see \cref{lemma:Q_exist}), it has a weakly convergent (not relabeled)
subsequence in $\L^2(0,\Tf;\HuT)$. Let $\Sht$ be the weak limit of $\{\Sht^\ell\}_{\ell\in \N}$. Next, we recall that
for any $\ell \in \N$, $\Sht^\ell$ satisfy
\begin{multline*}
-\int_0^{\Tf}\int_{\Ocero} \Sht^\ell(t) \partial_t W(t) r\drdz\dt
+\int_0^{\Tf}\int_{\Ocero}\kappa(\cdot,S_\ell)\grad\Sht^\ell(t)\cdot\grad W(t)r\drdz\dt
\\
+\int_0^{\Tf}\int_{\widehat{\Gamma}_1}\eta\,\Sht^\ell(t) W(t) r\dl\dt
=\int_0^{\Tf}\int_{\Ocero}\frac12\omega^2\sigma(\cdot,S_\ell)|\mathcal{H}(S_\ell)(t)|^2W(t)r\drdz\dt
\\
+\int_0^{\Tf}\int_{\widehat{\Gamma}_1}\eta\,T_c(t)W(t) r\dl\dt
+\int_{\Ocero} T_0 W(0) r\drdz
\end{multline*}
for all $W\in\L^2(0,\Tf;\HuT)\cap\H^1( 0,\Tf;\Ldrc)$ with $W(\Tf)=0$. Following the lines of that of Lemma~3.8 from
\cite{GLSV1}, now in time-dependent spaces, we have
\begin{equation}
\label{conv}
\begin{aligned}
-\int_0^{\Tf}\int_{\Ocero} \Sht^\ell(t) \partial_t W(t) r\drdz\dt
&\ \to \
-\int_0^{\Tf}\int_{\Ocero} \Sht(t)\partial_t W(t) r\drdz\dt\\
\int_0^{\Tf}\int_{\Ocero}\kappa(\cdot,S_\ell)\grad\Sht^\ell(t)\cdot\grad W(t)r\drdz\dt
&\ \to \
\int_0^{\Tf}\int_{\Ocero}\kappa(\cdot,S)\grad\Sht(t)\cdot\grad W(t)r\drdz\dt\\
\int_0^{\Tf}\int_{\widehat{\Gamma}_1}\eta\,\Sht^\ell(t) W(t) r\dl\dt
&\ \to \
\int_0^{\Tf}\int_{\widehat{\Gamma}_1}\eta\,\Sht(t) W(t) r\dl\dt \\
\int_0^{\Tf}\int_{\Ocero}\frac12\omega^2\sigma(\cdot,S_\ell)|\mathcal{H}(S_\ell)(t)|^2W(t)r\drdz\dt
&\ \to \
\int_0^{\Tf}\int_{\Ocero}\frac12\omega^2\sigma(\cdot,S)|\mathcal{H}(S)(t)|^2W(t)r\drdz\dt
\end{aligned}
\end{equation}
for all $W\in\L^2(0,\Tf;\HuT)\cap\H^1( 0,\Tf;\Ldrc)$ with $W(\Tf)=0$. Because the equivalence between problem
\eqref{eq:calQa}--\eqref{eq:calQb} and problem \eqref{eq:calQf}, we have proved that $\Sht=\mathcal{Q}(S)$. Moreover,
since the limit is unique, we conclude that $\Sht^\ell\to\Sht$ weakly in $\L^2(0,\Tf;\HuT)$.

Now, to prove that $\{\Sht^\ell\}_{\ell\in \N}$ converges strongly to $\Sht$ in $\L^2(0,\Tf;\HuT)\cap\H^1( 0,\Tf;\Ldrc)$,
we subtract the variational equations verified by $\Sht^\ell$ and $\Sht$, take the test function $W=\Sht^\ell-\Sht$ and
integrating over $[0,t]$, it follows that
\begin{align*}
\frac{1}{2}\norm{(\Sht^\ell-\Sht)(t)}{\Ldrc}^2
&+\int_0^t\int_{\Ocero}\!\!\kappa(\cdot,S_\ell)|\grad(\Sht^\ell-\Sht)(t)|^2r\drdz\dt
+\int_0^t\int_{\widehat{\Gamma}_1}\!\!\eta\,|(\Sht^\ell-\Sht)(t)|^2r\dl\dt
\\
&=\int_0^t\int_{\Ocero}(\kappa(\cdot,S_\ell)-\kappa(\cdot,S))\grad\Sht(t)\cdot\grad(\Sht^\ell-\Sht)(t)r\drdz\dt\\
&\quad+\int_0^t\int_{\Ocero}\!\!\frac12\omega^2
(\sigma(\cdot,S_\ell)|\mathcal{H}(S_\ell)(t)|^2-\sigma(\cdot,S)|\mathcal{H}(S)(t)|^2)(\Sht^\ell-\Sht)(t)r\drdz\dt.
\end{align*}
From \eqref{conv} and the fact that $\{\Sht^\ell\}\to\Sht$ weakly in $\L^2(0,\Tf;\HuT)$, it follows that the right-hand
side of the previous equation converges to zero. Then, the result follows from \eqref{hyp:eta}, \eqref{hyp:sigma} and
\cref{lemma:Poincare}.
\end{proof}
Using the definitions of the operators $\mathcal{H}$ and
$\mathcal{Q}$ together with the previous lemmas, we can prove the existence of the solution of the
nonlinear coupled \cref{pbm:AT_time0} using point-fixed arguments as shown in the following result.

\begin{theorem}
Given $\bV\in\C^m$, $T_c\in\L^2(0,\Tf;\LdGur)$ and $T_0\in\Ldrc$, \cref{pbm:AT_time0} has a solution
$(\At,\Tt)\in\L^2(0,\Tf;\HtuG)\times\L^2(0,\Tf;\HuT)\cap\H^1(0,\Tf;\Ldrc)$.
\end{theorem}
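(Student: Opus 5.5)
We apply Schauder's fixed-point theorem to $\mathcal{Q}:\L^2(0,\Tf;\Ldrc)\to\L^2(0,\Tf;\Ldrc)$. A fixed point $\Tt=\mathcal{Q}(\Tt)$, together with $\At\defeq\mathcal{H}(\Tt)$, solves \cref{pbm:AT_time0}. Indeed, \eqref{eq:pb_H} with $S=\Tt$ is the first equation, and \eqref{eq:calQa}--\eqref{eq:calQb} with $S=\Sht=\Tt$ give the thermal equation and the initial condition. The regularity $\At\in\L^2(0,\Tf;\HtuG)$ and $\Tt\in\L^2(0,\Tf;\HuT)\cap\H^1(0,\Tf;\Ldrc)$ then follows from \cref{lemma:H_eub,lemma:Q_exist}.

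First, we take the convex set
\[
K\defeq\Big\{S\in\L^2(0,\Tf;\Ldrc):\ \norm{S}{\L^\infty(0,\Tf;\Ldrc)}+\norm{S}{\L^2(0,\Tf;\HuT)}\leq M\Big\},
\]
where $M\defeq\hat{C}\big(\norm{T_0}{\Ldrc}+\norm{\bV}{\infty}^2+\norm{T_c}{\L^2(0,\Tf;\LdGur)}\big)$. This set is convex, nonempty and closed in $\L^2(0,\Tf;\Ldrc)$, since both norms are weakly lower semicontinuous along $\L^2(0,\Tf;\Ldrc)$-convergent sequences, after extracting weakly(-$*$) convergent subsequences. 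The key point is that the constant $\hat C$ in \cref{lemma:Q_exist} does not depend on $S$. It depends only on $\kappa_*,\eta_*,\eta^*,\sigma^*,C_p,C_e,C_{\mathrm{tr}}$ and the bound $C_c$ of \cref{lemma:H_eub}, which is itself independent of $S$. Hence $\mathcal{Q}$ maps all of $\L^2(0,\Tf;\Ldrc)$, and in particular $K$, into $K$.

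Continuity of $\mathcal{Q}$ into $\L^2(0,\Tf;\Ldrc)$ follows directly from \cref{lemma:Q_cont}, because the target norm there is stronger, and \cref{lemma:Q_cont} in turn rests on \cref{lemma:H_cont}.

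The remaining step, which is the main obstacle, is compactness of $\mathcal{Q}(K)$ in $\L^2(0,\Tf;\Ldrc)$. We use the Aubin--Lions lemma with the triple $\HuT\subset\Ldrc\subset(\HuT)'$. This requires two ingredients.

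The first is a uniform bound on $\partial_t\Sht$, at least in $\L^2(0,\Tf;(\HuT)')$ or $\L^1(0,\Tf;(\HuT)')$. We obtain it from \eqref{eq:calQa} by bounding each term through duality. The diffusion term is controlled by $\kappa^*$ and the boundary term by \cref{lemma:Traza}. For the Joule term we use H\"older's inequality with \cref{lemma:emb_cont}, namely $|\At|^2\in\L^3_1(\Ocero)$ tested against $W\in\L^6_1\supset\HuT$, together with the pointwise-in-time bound \eqref{cotaAt}. All resulting constants are independent of $S$.

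The second is the compactness of the embedding $\HuT\hookrightarrow\Ldrc$ in the weighted setting. This does not appear among the stated preliminaries. We would justify it either by citing the weighted Rellich theorem in \cite{BM1999} or via the embedding into $\L^6_1$ combined with the Rellich theorem away from the axis and a uniform smallness estimate of the $\L^2_1$ norm in a thin strip near $r=0$.

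With these two ingredients, $\mathcal{Q}(K)$ is relatively compact in $\L^2(0,\Tf;\Ldrc)$, and Schauder's theorem gives the fixed point.
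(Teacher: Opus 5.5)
Your argument is correct and follows the same overall strategy as the paper: Schauder's theorem for the temperature map $S\mapsto\mathcal{Q}(S)$, with $\At=\mathcal{H}(\Tt)$ recovered afterwards and compactness coming from an Aubin--Lions-type embedding. The compactness step, however, is organized differently.

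The paper applies Schauder to $\mathcal{Q}\circ j$, where $j$ is the compact injection of $\L^2(0,\Tf;\HuT)\cap\H^1(0,\Tf;\Ldrc)$ into $\L^2(0,\Tf;\Ldrc)$. It invokes only the $S$-independent bound of $\mathcal{Q}(S)$ in $\L^2(0,\Tf;\HuT)$. To close that argument one would also need a bound on $\partial_t\Sht$ in $\L^2(0,\Tf;\Ldrc)$ that is uniform in $S$. \cref{lemma:Q_exist} does not supply this, and it cannot be expected when $T_0$ is only in $\Ldrc$.

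You instead work directly in $\L^2(0,\Tf;\Ldrc)$ on the explicit closed convex invariant set $K$. You get relative compactness of $\mathcal{Q}(K)$ from a bound on $\partial_t\Sht$ in $\L^2(0,\Tf;(\HuT)')$, read off from \eqref{eq:calQa} via \cref{lemma:emb_cont,lemma:Traza}, \eqref{cotaAt} and the uniform $\L^2(0,\Tf;\HuT)$ bound. This is the time-derivative control actually available under the stated data, so your version is the more robust one, at the cost of a few extra lines.

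Both routes need the compactness of $\HuT\hookrightarrow\Ldrc$, which the paper takes from its references. Your self-contained alternative works: with $\mathrm{I}_\delta\defeq\int_{\Ocero\cap\{r<\delta\}}r\drdz\to0$, Hölder and the $\L^6_1$ embedding give $\int_{\Ocero\cap\{r<\delta\}}|W|^2r\drdz\leq C_e^2\,\mathrm{I}_\delta^{2/3}\norm{W}{\HuT}^2$, while on $\{r>\delta\}$ the weight is harmless and classical Rellich applies.

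Finally, the $\H^1(0,\Tf;\Ldrc)$ regularity of the fixed point is inherited from \cref{lemma:Q_exist} exactly as in the paper. Your compactness argument never uses it.
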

\begin{proof}
Let  $j:\L^2(0,\Tf;\HuT)\cap \H^1(0,\Tf;\Ldrc)\to \L^2(0,\Tf;\Ldrc)$ be the compact injection (see \cite[Lemma~4.2]{MR} and \cite[Chapter~1, Theorem~5.1]{Lions}). Note that
$(\At,\Tt)\in\L^2(0,\Tf;\HtuG)\times\L^2(0,\Tf;\HuT)\cap\H^1(0,\Tf;\Ldrc)$ is solution to \cref{pbm:AT_time0} if and
only if $\Tt$ is a fixed point of the mapping $\mathcal{Q}\circ j$ and $\At=(\mathcal{H}\circ j)(\Tt)$. Since
$\mathcal{Q}$ is continuous mapping and $j$ is compact, $\mathcal{Q}\circ j$ is compact. Moreover, since
$\norm{\mathcal{Q}(\Tt)}{\L^2(0,\Tf;\HuT)} \leq C(T_0,T_c,\bV)$, then Schauder's theorem yields the existence of a fixed
point $\Tt$ of $\mathcal{Q}$ and the proof is complete.
\end{proof}

%-------------------------------------------------------------------------------------
\subsection{Uniqueness}
Since \cref{pbm:AT_time0} is nonlinear, we do not known whether the solution is unique. In fact, the study of
uniqueness of solution, without making additional assumptions on the data, is a difficult task. Therefore, for the
following analysis we will further assume that the coefficients $\kappa$ and $\sigma$ satisfy:
\begin{itemize}
\item[$\mathbf{H}_{\kappa}.$]
There exists $\kcte>0 $ such that $\kappa(r,z,u)=\kcte(r,z)$ in $\Ocero\times \R$ and
$\kappa_*\leq\kcte(r,z)\leq\kappa^*$ in $\Ocero$.

\item[$\mathbf{H}_{\sigma}.$] $\sigma$ is uniformly Lipschitz continuous in the following sense: there exists
$\sigma_p>0$ such that
\begin{equation}\label{hyp:Lsigma}
|\sigma(r,z,u)-\sigma(r,z,v)|\leq \sigma_p|u-v|\quad \forall u,v\in \R, \mbox{ a.e. } (r,z) \in \Ocero.
\end{equation}
\end{itemize}

From hypothesis $\mathbf{H}_{\kappa}$ and \cref{pbm:AT_time0} we get the following variational formulation.
Notice that we have removed the subscript $\theta$ to simplify the notation.

\begin{problem}\label{pbm:AT_time}
Given $\bV\in\C^m$, $T_c\in\L^2(0,\Tf;\LdGur)$ and $T_0\in\Ldrc$, find $(A,T)\in\L^2(0,\Tf;\HtuG)\times
\L^2(0,\Tf;\HuT)\cap\H^1(0,\Tf;\Ldrc)$ such that for a.e. $t\in(0,\Tf)$
\begin{align*}
&i\omega\int_{\OC}\sigma(\cdot,T) A\,\bar{Z}\,r\drdz
+\langle\Acal(A),Z\rangle
=\sum_{k=1}^m\int_{\Ok}\sigma_k V_k\bar{Z}\drdz,
&& \forall Z\in\HtuG,
\\
&\int_{\Ocero}\dpt T\, W\, r\drdz
+b(T,W)
=\frac12\int_{\Ocero}\omega^2\sigma(\cdot,T)|A|^2Wr\drdz
+\int_{\Gtu}\eta\,T_c\,Wr\dl,
&& \forall W\in\HuT,
\\
&T(0)=T_0\qquad \mbox{in }\Ocero,
\end{align*}
where $\Acal$ is defined in \eqref{eq:Acal} and $b:\HuT\times\HuT \to \R$ is such that
\[
b(T,W)\defeq\int_{\Ocero}\kcte\grad T\cdot\grad W r\drdz+\int_{\widehat{\Gamma}_1}\eta\,T\, W\, r\dl.
\]
\end{problem}

%\begin{problem}\label{AT_time}
%Given $\bV\in\C^m$, $T_c\in\L^2(0,\Tf;\LdGur)$ and $T_0\in\Ldrc$, find $(A,T)\in\L^2(0,\Tf;\HtuG)\times
%\L^2(0,\Tf;\HuT)\cap\H^1(0,\Tf;\Ldrc)$ such that for all $(Z,W)\in\HtuG\times\HuT$ and a.e. $t\in(0,\Tf)$
%\begin{align*}
%i\omega\int_{\OC}\sigma(\cdot,T) A\,\bar{Z}\,r\drdz
%+\langle\Acal(A),Z\rangle
%&=\sum_{k=1}^m\int_{\Ok}\sigma_k V_k\bar{Z}\drdz,
%\\
%\int_{\Ocero}\dpt T\, W\, r\drdz
%+b(T,W)
%&=\frac12 \int_{\Ocero}\omega^2\sigma(\cdot,T)|A|^2Wr\drdz
%+\int_{\Gtu}\eta\,T_c\,Wr\dl, \\
%T(0)&=T_0\qquad \mbox{in }\Ocero,
%\end{align*}
%where $\Acal$ is defined in \eqref{eq:Acal} and $b:\HuT\times\HuT \to \R$ is such that
%\[
%b(T,W)\defeq\int_{\Ocero}\kcte\grad T\cdot\grad W r\drdz+\int_{\widehat{\Gamma}_1}\eta\,T\, W\, r\dl.
%\]
%\end{problem}

In the following theorem, given the previous assumptions, we establish  uniqueness of the solution to \cref{pbm:AT_time}.
\begin{theorem}\label{U_AT}
Given $\bV\in\C^m$, $T_c\in\L^2(0,\Tf;\LdGur)$ and $T_0\in\Ldrc$, if $\mathbf{H}_{\kappa}$ and $\mathbf{H}_{\sigma}$ are
satisfied, then \cref{pbm:AT_time} has a unique solution.
\end{theorem}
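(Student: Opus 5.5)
Existence is already given by the previous theorem, so the task is uniqueness. Let $(A_1,T_1)$ and $(A_2,T_2)$ be two solutions of \cref{pbm:AT_time}, and write $A\defeq A_1-A_2$ and $\Theta\defeq T_1-T_2$. The plan is to bound $A$ pointwise in time by $\Theta$ through the electromagnetic equation, substitute this into an energy estimate for the heat equation, and close with Gronwall's lemma. That gives $\Theta\equiv 0$, and then $A\equiv0$ follows.

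\emph{Step 1 (electromagnetic estimate).} Subtract the two electromagnetic equations, exactly as in the proof of \cref{lemma:H_cont}, and test with $Z=A(t)$. Taking real parts and using \eqref{AcalSM} gives
\[
\tilde{\varrho}\norm{A(t)}{\Htu}^2\leq \omega\int_{\Ocero}|\sigma(\cdot,T_1)-\sigma(\cdot,T_2)|\,|A_2|\,|A|\,r\drdz .
\]
By $\mathbf{H}_{\sigma}$ the integrand is bounded by $\sigma_p|\Theta|\,|A_2|\,|A|$. Apply Hölder with exponents $(2,6,3)$, \cref{lemma:emb_cont}, the $\L^3_1$–$\L^6_1$ comparison, and the uniform bound \eqref{cotaAt} on $A_2(t)$. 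This yields
\[
\norm{A(t)}{\Htu}\leq C_1\norm{\bV}{\infty}\norm{\Theta(t)}{\Ldrc}\quad\text{a.e. } t .
\]
Only the $\L^2$ norm of $\Theta$ appears here, which is what we need.

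\emph{Step 2 (thermal energy estimate).} Subtract the two thermal equations. Under $\mathbf{H}_{\kappa}$ the diffusion form $b$ is linear and independent of $T$, so no term with $\grad T_2$ arises. Test with $W=\Theta(t)$ and use \cref{lemma:Poincare}:
\[
\frac12\frac{d}{dt}\norm{\Theta}{\Ldrc}^2+C_p\min\{\kappa_*,\eta_*\}\norm{\Theta}{\HuT}^2\leq \frac{\omega^2}{2}\int_{\Ocero}\big|\sigma(\cdot,T_1)|A_1|^2-\sigma(\cdot,T_2)|A_2|^2\big|\,|\Theta|\,r\drdz .
\]
Split the integrand as $(\sigma(\cdot,T_1)-\sigma(\cdot,T_2))|A_1|^2+\sigma(\cdot,T_2)(|A_1|^2-|A_2|^2)$ and bound the two resulting terms separately.

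The second term is handled by $\big||A_1|^2-|A_2|^2\big|\leq |A|(|A_1|+|A_2|)$. Hölder with exponents $(6,6,6,2)$, \cref{lemma:emb_cont}, \eqref{cotaAt} and Step 1 bound it by $C\norm{\Theta}{\Ldrc}^2$, with no gradient of $\Theta$ needed.

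\emph{Main obstacle.} The first term, bounded by $\sigma_p|\Theta|^2|A_1|^2$, is the delicate one. Hölder gives
\[
\int_{\Ocero}|\Theta|^2|A_1|^2 r\drdz\leq \norm{A_1}{\L^6_1}^2\,\norm{\Theta}{\L^3_1}^2 .
\]
I would then interpolate $\norm{\Theta}{\L^3_1}\leq\norm{\Theta}{\Ldrc}^{1/2}\norm{\Theta}{\L^6_1}^{1/2}$ and apply \cref{lemma:emb_cont}, giving a bound $C\norm{\Theta}{\Ldrc}\norm{\Theta}{\HuT}$. Young's inequality then absorbs the $\HuT$ part into the left-hand side, leaving $C\norm{\Theta}{\Ldrc}^2$.

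\emph{Conclusion.} We obtain $\frac{d}{dt}\norm{\Theta}{\Ldrc}^2\leq C\norm{\Theta}{\Ldrc}^2$ with $\Theta(0)=0$. The constant $C$ depends only on $\norm{\bV}{\infty}$, $\omega$, $\sigma^*$, $\sigma_p$ and the embedding constants, and is uniform in $t$ thanks to \eqref{cotaAt}. Gronwall's lemma gives $\Theta\equiv0$, and Step 1 then gives $A\equiv0$.
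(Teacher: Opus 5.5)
Your proposal is correct and follows essentially the same route as the paper. The electromagnetic equation, together with \eqref{AcalSM}, $\mathbf{H}_{\sigma}$, H\"older with exponents $(2,6,3)$ and the uniform bound \eqref{cotaAt}, gives $\norm{A_1-A_2}{\Htu}\lesssim\norm{T_1-T_2}{\Ldrc}$ for a.e.\ $t$. This is inserted into the energy estimate for the heat equation (linear in the diffusion by $\mathbf{H}_{\kappa}$) tested with $T_1-T_2$, and Gronwall's lemma closes the argument.

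The differences are cosmetic. You split the Joule term using $|A_1|^2$ rather than $|A_2|^2$, and you bound $\int_{\Ocero}|T_1-T_2|^2|A_1|^2r\drdz$ by $\L^3_1$ interpolation instead of the paper's direct estimate $\norm{(T_1-T_2)|A_2|^2}{\Ldrc}\le\norm{T_1-T_2}{\L^6_1(\Ocero)}\norm{A_2}{\L^6_1(\Ocero)}^2$. Both give the same bound $C\norm{T_1-T_2}{\Ldrc}\norm{T_1-T_2}{\HuT}$, which is then absorbed by Young's inequality.
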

\begin{proof}
Let $(A_1,T_1)$ and $(A_2,T_2)$ be two solutions of \cref{pbm:AT_time}. By proceeding as in the proof of
\cref{lemma:H_cont}  (cf.~\eqref{eq:As_Al}) and from $\mathbf{H}_{\sigma}$ it follows that there exists $C>0$ such that
\begin{equation}\label{eq:A_Al}
\norm{A_1-A_2}{\Htu}
%\leq C\norm{\sigma(T_1)-\sigma(T_2)}{\Ldrc}
\leq C \norm{T_1-T_2}{\Ldrc}.
\end{equation}
Next we estimate $\norm{T_1-T_2}{\Ldrc}$. From the thermal equation in \cref{pbm:AT_time} we get
\begin{equation*}
\int_{\Ocero}\partial_t(T_1-T_2) Wr\drdz
+b(T_1- T_2,W)
%+\int_{\Ocero}\kcte\grad (T_1- T_2)\cdot \grad W r\drdz
%+\int_{\widehat{\Gamma}_1}\eta(T_1-T_2)Wr\dl\\
=\frac12 \int_{\Ocero}\omega^2(\sigma(T_1)|A_1|^2-\sigma(T_2)|A_2|^2)\,W\,r\drdz.
%\label{eq:T1_T2}
\end{equation*}
By taking $W=T_1-T_2$ in the previous equation, from $\mathbf{H}_{\kappa}$ and \cref{lemma:Poincare}  we obtain
\begin{align}
\frac{1}{2}
\frac{d}{dt}&\norm{T_1-T_2}{\Ldrc}^2
+C_p\min\{\kappa_*,\eta_*\}\norm{T_1-T_2}{\Huc}^2\nonumber\\
&\leq \frac{\omega^2}{2}\int_{\Ocero}(\sigma(T_1)|A_1|^2
-\sigma(T_2)|A_2|^2)(T_1-T_2)\,r\drdz\nonumber\\
&\leq \frac{\omega^2}{2}\left[
\norm{(\sigma(T_1)-\sigma(T_2))|A_2|^2}{\Ldrc}
+\norm{\sigma(T_1)(|A_1|^2-|A_2|^2)}{\Ldrc}
\right]
\norm{T_1-T_2}{\Ldrc}.\label{eq:T1_T2_b}
\end{align}
It remains to estimate the right-hand side of the previous equation. For these terms we apply \eqref{hyp:sigma}, \eqref{hyp:Lsigma} and H\"older's inequality, then there exists $C>0$ such that
\begin{align}
\label{eq:T1_T2_A2}
\norm{(\sigma(T_1)-\sigma(T_2))|A_2|^2}{\Ldrc}
&\leq C\norm{T_1-T_2}{\L^6_1(\Ocero)}\norm{A_2}{\L_1^6(\Ocero)}^2\leq C\norm{T_1-T_2}{\HuT},
\\
\norm{\sigma(T_1)(|A_1|^2-|A_2|^2)}{\Ldrc}
&\leq C\norm{A_1+A_2}{\L^3_1(\Ocero)}\norm{A_1-A_2}{\L^6_1(\Ocero)}\leq C\norm{A_1-A_2}{\Htu}\label{eq:T1_T2_e}
\end{align}
where we have used \cref{lemma:emb_cont} and the fact that $\norm{A_i}{\Htu}\leq \frac{C_c}{\sqrt{\Tf}} \norm{\bV}{\infty}, i \in \{1,2\}$ (cf. \eqref{cotaAt}).

By substituting \eqref{eq:T1_T2_A2}--\eqref{eq:T1_T2_e} in \eqref{eq:T1_T2_b} and using the bound for
$\norm{A_1-A_2}{\Htu}$ (cf.~\eqref{eq:A_Al}), it follows that there exists   $C>0$ such that
\begin{equation*}
\frac{d}{dt}\norm{T_1-T_2}{\Ldrc}^2
+ \norm{T_1-T_2}{\Huc}^2\\
\leq
C\norm{T_1-T_2}{\Ldrc}^2\ .
\end{equation*}
Therefore, from Gr\"{o}nwall's inequality  it follows that $T_1=T_2$ which  implies that
$A_1=A_2$ (cf.~\eqref{eq:A_Al}) and the proof is complete.
\end{proof}

%-------------------------------------------------------------------------------------
\begin{remark}  We assume $\mathbf{H}_{\kappa}$ for the sake of clarity in presenting the uniqueness result,
although it is important to note that the analysis extends to a more general case, e.g. (see \cite{GLSV1})
\begin{itemize}
\item[$\mathbf{H}^*_{\kappa}.$] $\kappa(r,z,u)=\alpha(r,z)\beta(u)$ where $\alpha$ and
$\beta$ are measurable and continuous functions, respectively, that  satisfy
$0<\alpha_*\leq\alpha\leq\alpha^*$ and  $0<\beta_*\leq\beta\leq\beta^*$,
$\alpha_*,\alpha^*,\beta_*,\beta^* \in \R$.
\end{itemize}
%The previous uniqueness result can be proved by considering
%less restictive assumptions for $\kappa$, e.g.
%In order to present the result briefly, we have decided to assume
% which is used to study the discrete problem as in \cite{GLSV1}.
\end{remark}

%-------------------------------------------------------------------------------------
\section{Fully discrete approximation}\label{sec:fully}
%-------------------------------------------------------------------------------------

In this section, we introduce a space discretization based on finite elements and a backward Euler scheme for time
discretization of \cref{pbm:AT_time} and obtain error estimates for the proposed approximation.

Let $\left\{\Th\right\}_{h>0}$ be a regular family of partitions of $\bar{\O}=[0,R]\times[0,L]$ in triangles, where $h$
denotes the mesh-size (i.e., the maximal length of the sides of the triangulation). We also assume that each element
$K\in \Th$ is contained either in $\OA$ or in $\Ok$, $k=0,\dots,m$. We denote
\[
\Th^{0}\defeq\left\{K\in\Th:K\subset\Occ\right\}.
\]

Let $\Y_h$ be the space of piecewise linear continuous finite elements with vanishing values in $\Gamma$,
\[\Y_h\defeq\{Z\in \mathcal{C}(\bar{\O})\ :\ Z|_K\in \mathbb{P}_1(K)\ \ \forall K\in\Th,\ Z|_{\Gamma}=0\}\subset \HtuG\]
and
\[
\X_h\defeq \{W\in \mathcal{C}(\Occ)\ :\ W|_K\in \mathbb{P}_1(K)\ \ \forall K\in\Th^{0}\}\subset \HuT.
\]
For the time discretization, we consider a uniform partition of $[0,\Tf]$, $t_n:=n\Dt$, $n=0,\dots,N$, with time step $\Dt :=\Tf/N$. Let us now introduce the following notation: if $v$ is
regular enough with respect to $t$, we denote $v^n := v (t_n), n = 0, \ldots  N$. On the other hand, for any
sequence $\{u^{n}\}_{n=0}^N$, we define
\[
\diff u^{n+1}\defeq \frac{u^{n+1}-u^{n}}{\Dt},\qquad n=0,\dots,N-1.
\]
We write  $\texttt{a} \lesssim \texttt{b}$ to mean that $\texttt{a} \leq C\, \texttt{b}$, where $C>0$ is a generic constant independent of $\texttt{a}$, $\texttt{b}$, the mesh size $h$ and the time step $\Dt$. The value of $C$ may change at each occurrence and will be specified only when is needed.

For the numerical scheme, we consider that $\bV$ and $T_c$ are continuous in time and that we dispose of an
approximation $T_h^0\in \X_h$ of the initial data $T_0$. We propose the following fully discrete approximation of \cref{pbm:AT_time}:
%
%\begin{problem}\label{AT_fully}
%Given $\bV\in\C^m$, $T_c\in\Ccal([0,\Tf];\LdGur)$ and $T_h^0\in \X_h$, find
%$(A_h^n,T_h^{n+1})\in\Y_h\times\X_h,\ n = 0,\ldots,N$, such that for all $(Z_h,W_h)\in \Y_h\times\X_h$
%\begin{align*}
%i\omega\int_{\OC}\sigma(T_h^n) A_h^n\bar{Z}_h r\drdz
%+\int_\O \nu(|\curl(A_h^n\be_{\theta})|)
%\left(
%\frac{1}{r}\frac{\partial(rA_h^n)}{\partial r}\frac{1}{r}\frac{\partial(r\bar{Z}_h)}{\partial r}
%+\frac{\partial A_h^n}{\partial z}\frac{\partial \bar{Z}_h}{\partial z}\right)r\drdz
%\\
%=\sum_{k=1}^m\int_{\Ok}\sigma_k\,V_k\bar{Z}_h\drdz\\
%\int_{\Ocero}\diff T_h^{n+1}\, W_h r\drdz
%+\int_{\Ocero}\kcte\grad T_h^{n+1}\cdot\grad W_h r\drdz
%+\int_{\Gtu}\eta\,T^{n+1}_h W_h r\dl\\
%+b(T_h^{n+1},W_h)
%=\int_{\Ocero}\omega^2\sigma(T_h^n)|A_h^n|^2 W_h r\drdz
%+\int_{\Gtu}\eta\,T_c^n W_h r\dl .
%\end{align*}
%\end{problem}
\begin{problem}\label{pbm:AT_fully}
Let $\bV\in\C^m$, $T_c\in\Ccal([0,\Tf];\LdGur)$ and $T_h^0\in \X_h$ be given data.
\begin{itemize}[leftmargin=*]
\item[$\scriptscriptstyle \bullet$]  Find $A_h^0\in\Y_h$ such that
\begin{equation}\label{eq:fully_A0}
i\omega\int_{\OC}\sigma(T_h^{0}) A_h^0\,\bar{Z}_h \, r\drdz
+ \langle\Acal(A^0_h),Z_h \rangle
%+\int_\O \nu(|\curl(A_h^0\be_{\theta})|)
%\left(
%\frac{1}{r}\frac{\partial(rA_h^0)}{\partial r}\frac{1}{r}\frac{\partial(r\bar{Z}_h)}{\partial r}
%+\frac{\partial A_h^0}{\partial z}\frac{\partial \bar{Z}_h}{\partial z}\right)r\drdz\\
=\sum_{k=1}^m\int_{\Ok}\sigma_k\,V_k\bar{Z}_h\drdz \quad  \forall Z_h\in \Y_h
\end{equation}
\item[$\scriptscriptstyle \bullet$] For   $\,n =0,\ldots,N-1$, let $(A_h^{n+1},T_h^{n+1})\!\in\!\Y_h\!\times\X_h$ such that, for all $(W_h,Z_h)\!\in\!\Y_h\times\X_h$
\begin{align}
\int_{\Ocero}\diff T_h^{n+1}\, W_h\, r\drdz+b(T_h^{n+1},W_h)
=\frac12 \int_{\Ocero}\omega^2\sigma(T_h^{n})|A_h^{n}|^2 W_h\, r\drdz
+\int_{\Gtu}\eta\,T_c^{n} W_h\, r\dl, \label{eq:fully_Tn}\\
i\omega\int_{\OC}\sigma(T_h^{n+1}) A_h^{n+1}\bar{Z}_h\, r\drdz
+ \langle\Acal(A^{n+1}_h),Z_h \rangle
%+\int_\O \nu(|\curl(A_h^{n+1}\be_{\theta})|)
%\left(
%\frac{1}{r}\frac{\partial(rA_h^{n+1})}{\partial r}\frac{1}{r}\frac{\partial(r\bar{Z}_h)}{\partial r}
%+\frac{\partial A_h^{n+1}}{\partial z}\frac{\partial \bar{Z}_h}{\partial z}\right)r\drdz\nonumber\\
=\sum_{k=1}^m\int_{\Ok}\sigma_k\,V_k\bar{Z}_h\drdz.\label{eq:fully_An}
\end{align}
\end{itemize}
\end{problem}
We can prove the existence and uniqueness of $A_h^{n}$, for each $n=0,\ldots,N$, using \cref{lemma:Htu,lemma:Acal}, along with a known existence result for nonlinear problems (see \cite[Theorem 25.B]{zeidler2b}). On the other hand, the existence and uniqueness of  $T_h^{n}$, $n=1,\ldots,N$, follow directly from \cref{lemma:Poincare} due to the Lax-Milgram theorem. Thus, \cref{pbm:AT_fully} is well posed. Moreover, the solution is uniformly bounded in $n$, in particular
\begin{equation}\label{eq:est_ATh}
\begin{aligned}
\norm{A_h^n}{\Htu}&\lesssim\norm{\bV}{\infty}\\
\norm{T_h^{n+1}}{\Ldrc}+\Big(\Dt\sum_{k=1}^n\norm{T_h^k}{\Huc}^2\Big)^{1/2}&\lesssim \norm{T_h^{0}}{\Ldrc}+\norm{\bV}{\infty}^2+\norm{T_c}{\Ccal([0,\Tf];\LdGur)}.
\end{aligned}
\end{equation}

%For the analysis of convergence we define the sesquilinear form $c:\HtuG\times\HtuG\to \C$ such that
%\[
%c(A ,Z)\defeq \int_\O \frac{1}{\mu}\left(
%\frac{1}{r}\frac{\partial(rA)}{\partial r}\frac{1}{r}\frac{\partial(r\bar{Z})}{\partial r}
%+\frac{\partial A}{\partial z}\frac{\partial \bar{Z}}{\partial z}
%\right)r\drdz\quad\forall A,Z \in \HtuG.
%\]
%$R_h Z \in \Y_h$ satisfies
%\[
%c(Z-R_hZ,Z_h)=0 \quad \forall Z_h\in\Y_h.
%\]
%Since the sesquilinear form $c$ is $\HtuG$-elliptic and continuous (cf.~\cref{lemma:Htu}) it follows that $R_h$ is
%well-defined and there exists $\CR\geq 0$ such that
%
 Let  $I_h:\HtuG\to\Y_h$ and
$Q_h:\HuT\to\X_h$ be  Cl\'ement-type operators such that, for all $s\in [1,2]$ (see \cite[Theorem~1 and Theorem~2]{BBD2006}),
\begin{align}\label{est_Ih}
\norm{Z-I_hZ}{\Htu}&\lesssim h^{s-1}\norm{Z}{\H^s_1(\O)\cap\Htu}, && Z\in \H^s_1(\O)\cap\HtuG,\\
h^{-1}\norm{W-Q_hW}{\Ldrc}+\norm{W-Q_hW}{\Huc}&\lesssim h^{s-1}\norm{W}{\H^s_1(\Ocero)}, && W\in \H^s_1(\Ocero).\label{est_Qh}
\end{align}
Our next goal is to derive error estimates for \cref{pbm:AT_fully},
the numerical approximation of \cref{pbm:AT_time}. To this end, we henceforth assume that the solution to
\cref{pbm:AT_time} satisfies $(A,T)\in\Ccal([ 0,\Tf];\HtuG)\times\Ccal([ 0,\Tf];\Huc)$. For $n = 0,\ldots,N$, we define the errors and
split them as follows
\[
A_h^n-A^n\defeq \eA^n+I_h A^n-A^n,\qquad T_h^n-T^n\defeq \eT^n+Q_h T^n-T^n,
%\qquad n = 0,\ldots,N,
\]
where
\[
\eA^n\defeq A_h^n-I_h A^n,\qquad \eT^n\defeq T_h^n-Q_h T^n.% ,\qquad n = 0,\ldots,N.
\]

Now, we are in a position to write the main result of this paper related to the convergence of the proposed scheme.
\begin{theorem}
Given $\bV\in\C^m$, $T_c\in\H^1(0,\Tf;\LdGur)$ and  $T_0\in\HuT$. Let
$(A,T)\in\L^2(0,\Tf;\HtuG)\times \L^2(0,\Tf;\HuT)\cap\H^1(0,\Tf;\Ldrc)$, $A_h^0\in\Y_h$ and $(A_h^n,T_h^n)\in\Y_h\times\X_h,\ n =
1,\ldots,N$ solutions to \cref{pbm:AT_time} and \cref{pbm:AT_fully}, respectively. If
$(A,T)\in\H^1(0,\Tf;\Huc)\times\H^2(0,\Tf;\Ldrc)$ and
$T_h^0:=Q_hT^0$, then
\begin{align*}
\underset{1\leq n\leq N}{\max}&\norm{T_h^n-T^n}{\Ldrc}^2
+\Dt\sum_{n=1}^N\norm{T_h^n-T^n}{\Huc}^2
%+\Dt\sum_{n=1}^N\norm{A_h^n-A^n}{\Htu}^2
+\underset{0\leq n\leq N}{\max}\norm{A_h^n-A^n}{\Htu}^2
\\
&\lesssim
\underset{0\leq n\leq N}{\max}\norm{A^n-I_hA^n}{\Htu}^2
+\underset{0\leq n\leq N}{\max}\norm{T^n-Q_h T^n}{\Huc}^2
+\norm{(I-Q_h)\partial_t T}{\L^2(0,\Tf;\Ldrc)}^2
\\
&\quad+\Dt^2\left(
\norm{\partial_t T_c}{\L^2(0,\Tf;\LdGur)}^2
+\norm{ T}{\H^2(0,\Tf;\Ldrc)}^2
+\norm{\partial_t A}{\L^2(0,\Tf;\Huc)}^2
\right).
\end{align*}
\end{theorem}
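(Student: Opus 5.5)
Our approach is the standard energy argument for the error components $\eA^n$ and $\eT^n$, combined through a discrete Gronwall inequality; the interpolation parts $I_hA^n-A^n$, $Q_hT^n-T^n$ are controlled by the triangle inequality. The argument has three steps.

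\textbf{Step 1: electromagnetic error.} Fix $n\in\{0,\dots,N\}$ and subtract the continuous electromagnetic equation at $t_n$ (tested with $Z_h\in\Y_h$) from \eqref{eq:fully_A0}/\eqref{eq:fully_An}. This gives
\[
i\omega\!\int_{\OC}\!\sigma(T_h^n)(A_h^n-A^n)\bar Z_h r\drdz+\langle\Acal(A_h^n)-\Acal(A^n),Z_h\rangle
=-i\omega\!\int_{\Ocero}\!(\sigma(T_h^n)-\sigma(T^n))A^n\bar Z_h r\drdz .
\]
We take $Z_h=\eA^n$ and split $A_h^n-A^n=\eA^n+(I_hA^n-A^n)$. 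Strong monotonicity \eqref{AcalSM} then gives $\tilde\varrho\norm{\eA^n}{\Htu}^2$ on the left (taking real parts). The terms containing $I_hA^n-A^n$ are bounded by Lipschitz continuity \eqref{AcalL} and the bound on $\sigma$. The right-hand side is treated with $\mathbf H_\sigma$, H\"older's inequality ($\L^2_1\cdot\L^6_1\cdot\L^3_1$) and \cref{lemma:emb_cont}. Since $\norm{A^n}{\Htu}\lesssim\norm{\bV}{\infty}$ by \eqref{cotaAt}, Young's inequality yields
\[
\norm{\eA^n}{\Htu}\lesssim \norm{A^n-I_hA^n}{\Htu}+\norm{\eT^n}{\Ldrc}+\norm{T^n-Q_hT^n}{\Ldrc}.
\]
For $n=0$ we have $\eT^0=0$ because $T_h^0=Q_hT^0$.

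\textbf{Step 2: thermal error.} At $t_{n+1}$ the exact solution satisfies $\int\dpt T^{n+1}W r+b(T^{n+1},W)=\frac12\int\omega^2\sigma(T^{n+1})|A^{n+1}|^2W r+\int_{\Gtu}\eta T_c^{n+1}Wr$. Subtracting this from \eqref{eq:fully_Tn} with $W_h=\eT^{n+1}$ produces the following terms.

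(i) $\int\diff\eT^{n+1}\eT^{n+1}r$, which is at least $\frac1{2\Dt}\big(\norm{\eT^{n+1}}{}^2-\norm{\eT^n}{}^2\big)$.

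(ii) The coercive term $b(\eT^{n+1},\eT^{n+1})\ge C_p\min\{\kappa_*,\eta_*\}\norm{\eT^{n+1}}{\Huc}^2$, by \cref{lemma:Poincare}.

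(iii) The consistency terms $\diff(Q_hT^{n+1})-\dpt T^{n+1}$. We split this as $\diff((Q_h-I)T^{n+1})+(\diff T^{n+1}-\dpt T^{n+1})$. The squared $\Ldrc$ norms, summed with weight $\Dt$, are bounded by $\norm{(I-Q_h)\dpt T}{\L^2(0,\Tf;\Ldrc)}^2$ and $\Dt^2\norm{\partial_{tt}T}{\L^2(0,\Tf;\Ldrc)}^2$ via Taylor's formula with integral remainder.

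(iv) The term $b((Q_h-I)T^{n+1},\eT^{n+1})$, bounded by $\norm{T^{n+1}-Q_hT^{n+1}}{\Huc}$ using \cref{lemma:Traza}.

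(v) The boundary lag $\int\eta(T_c^n-T_c^{n+1})\eT^{n+1}r$, bounded through \cref{lemma:Traza} by $\Dt^{1/2}\norm{\dpt T_c}{\L^2(t_n,t_{n+1};\LdGur)}$.

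(vi) The Joule lag $\frac{\omega^2}2\int(\sigma(T_h^n)|A_h^n|^2-\sigma(T^{n+1})|A^{n+1}|^2)\eT^{n+1}r$. We split it as $[\sigma(T_h^n)|A_h^n|^2-\sigma(T^n)|A^n|^2]+[\sigma(T^n)|A^n|^2-\sigma(T^{n+1})|A^{n+1}|^2]$.

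\textbf{Step 3: the Joule term and conclusion.} We expect (vi) to be the main obstacle, since the data are only $\L^1$-type products. We handle it exactly as in \eqref{eq:T1_T2_A2}--\eqref{eq:T1_T2_e}: H\"older with $\L^6_1$ exponents and \cref{lemma:emb_cont}, together with uniform $\Htu$ bounds on $A^n$ and $A_h^n$ (from \eqref{cotaAt} and \eqref{eq:est_ATh}). This gives
\[
\lesssim\big(\norm{T_h^n-T^n}{\Huc}+\norm{A_h^n-A^n}{\Htu}\big)\norm{\eT^{n+1}}{\Huc}.
\]
The factor $\norm{\eT^n}{\Huc}$ it produces must be absorbed into the dissipation at the previous step, which requires the constant to be small. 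To avoid this, we plan to place the $\sigma$-difference factor $|T_h^n-T^n|$ in $\L^2_1$ and put the $\L^6_1$ norms on $|A^n|^2$ and $\eT^{n+1}$; this gives a bound of the form $\norm{T_h^n-T^n}{\Ldrc}\norm{\eT^{n+1}}{\Huc}$, for which Young's inequality suffices. This uses that $A^n\in\Ccal([0,\Tf];\HtuG)$. The $A$-difference is then replaced by the Step 1 bound. The time-lag bracket is bounded by $\mathbf H_\sigma$ and the same H\"older estimates by $\norm{T^{n+1}-T^n}{\Huc}+\norm{A^{n+1}-A^n}{\Htu}$. Squared and summed with weight $\Dt$, these are at most $\Dt^2\big(\norm{\dpt T}{\L^2(\Huc)}^2+\norm{\dpt A}{\L^2(\Huc)}^2\big)$; the first of these we bound by $\norm{T}{\H^2(0,\Tf;\Ldrc)}$ plus regularity, or else we keep it as written.

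Finally, we multiply the resulting inequality by $2\Dt$, absorb all $\norm{\eT^{n+1}}{\Huc}^2$ terms by Young's inequality, sum over $n$, and apply the discrete Gronwall lemma (for $\Dt$ small) to the term $\norm{\eT^n}{\Ldrc}^2$. This bounds $\max_n\norm{\eT^n}{\Ldrc}^2+\Dt\sum_n\norm{\eT^n}{\Huc}^2$ by the right-hand side of the theorem. Inserting this into Step 1 bounds $\max_n\norm{\eA^n}{\Htu}^2$, and the triangle inequality completes the estimate.
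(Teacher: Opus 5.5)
Your proposal follows the paper's proof essentially step by step. It uses the same splitting $A_h^n-A^n=\eA^n+(I_hA^n-A^n)$, $T_h^n-T^n=\eT^n+(Q_hT^n-T^n)$ and the same strong-monotonicity bound \eqref{etanh} for $\eA^n$. It also treats the lagged Joule error the same way, with the $\sigma$-difference in $\L^2_1$ and the remaining factors in $\L^6_1$, and closes with the same discrete Gronwall argument. The paper only differs in bookkeeping: it collects your consistency and time-lag terms into the remainder $\mathcal{R}^n(W)$.

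There is one step that, as written, does not give the stated estimate: the time-lag bracket $\sigma(T^n)|A^n|^2-\sigma(T^{n+1})|A^{n+1}|^2$. You bound it by $\norm{T^{n+1}-T^n}{\Huc}+\norm{A^{n+1}-A^n}{\Htu}$, which after summation produces $\Dt^2\norm{\dpt T}{\L^2(0,\Tf;\Huc)}^2$. That term is not on the right-hand side of the theorem, and it is not bounded by $\norm{T}{\H^2(0,\Tf;\Ldrc)}$. The assumption $\dpt T\in\L^2(0,\Tf;\H^1_1(\Ocero))$ only enters later, in Theorem~\ref{thm:est_2}. So neither ``plus regularity'' nor ``keep it as written'' proves the statement as posed.

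The fix is the H\"older placement you already use for the first bracket. Using $\mathbf{H}_\sigma$, write $\int_{\Ocero}|T^{n+1}-T^n|\,|A^n|^2\,|\eT^{n+1}|\,r\drdz\le\norm{T^{n+1}-T^n}{\Ldrc}\norm{A^n}{\L^6_1(\Ocero)}^2\norm{\eT^{n+1}}{\L^6_1(\Ocero)}$. Then only $\norm{T^{n+1}-T^n}{\Ldrc}^2\le\Dt\int_{t_n}^{t_{n+1}}\norm{\dpt T}{\Ldrc}^2\ds$ enters, and $\norm{\dpt T}{\L^2(0,\Tf;\Ldrc)}\le\norm{T}{\H^2(0,\Tf;\Ldrc)}$. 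This is exactly how the paper estimates $\norm{\sigma(T^{n+1})-\sigma(T^n)}{\Ldrc}$.

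Likewise, the $A$-lag must be measured in $\Huc$ (via \cref{lemma:emb_cont} on $\Ocero$), not in $\Htu$, since only $\dpt A\in\L^2(0,\Tf;\Huc)$ is assumed. Your final line already uses the right norm, but the intermediate bound does not.

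Finally, no smallness of $\Dt$ is needed for the Gronwall step. All $\eT$-contributions on the right-hand side sit at the lagged level $n$, so the explicit discrete Gronwall lemma applies directly.
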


\begin{proof}
Let us begin by estimating the error in the electromagnetic problem. To this end, we note that, for each time $t_n$, $n=0,\dots,N$, the pair $(A^n,T^n)$ satisfies
\begin{equation}
\label{A_tn}
i\omega\int_{\OC}\sigma(T^n) A^n\bar{Z}\,r\drdz
+\langle\Acal(A^n),Z\rangle
=\sum_{k=1}^m\int_{\Ok}\sigma_k V_k\bar{Z}\drdz
\end{equation}
for all $Z\in\HtuG$. By subtracting $\eqref{A_tn}$ from \eqref{eq:fully_A0} and \eqref{eq:fully_An},
 using the definitions of $I_h$ and $\eA^n$ it follows that, for all $Z_h\in\Y_h$
\begin{multline*}
i\omega\int_{\OC}\sigma(T_h^n) \eA^n\bar{Z}_hr\drdz
+\langle\Acal(A_h^n) -\Acal(I_h A^n),Z_h\rangle
=i\omega\int_{\Ocero}\left(\sigma(T^n)-\sigma(T_h^n)\right) A^n\bar{Z}_hr\drdz\\
+i\omega\int_{\OC}\sigma(T_h^n)\left(A^n-I_h A^n\right)\bar{Z}_hr\drdz
+\langle\Acal(A^n) -\Acal(I_h A^n),Z_h\rangle.
\end{multline*}
By taking $Z_h=\eA^n$ in the previous equation, using \eqref{hyp:sigma}, \eqref{hyp:Lsigma}, \cref{lemma:Acal} and  applying Cauchy-Schwarz inequality we arrive at
\begin{align*}
\tilde{\varrho}\norm{\eA^n}{\Htu}^2&
\leq \mathrm{Re}\Big(i\omega\int_{\OC}\sigma(T_h^n) \left|\eA^n\right|^2r\drdz
+\langle\Acal(A_h^n) -\Acal(I_h A^n),\eA^n\rangle\Big)
\\
&\leq \omega\sigma_p \norm{T^n-T_h^n}{\Ldrc}\norm{A^n\eA^n}{\Ldrc}\\
% + \omega\max\{\sigma^*,\scoil\}\norm{A^n-I_h A^n}{\LdrOC}\norm{\eA^n}{\LdrOC}\\
&\quad+ \omega\max\{\sigma^*,\scoil\}\norm{A^n-I_h A^n}{\LdrOC}\norm{\eA^n}{\LdrOC}\\
&\quad+\tilde{\xi}\norm{A^n-I_hA^n}{\Htu}\norm{\eA^n}{\Htu}\\
&\lesssim \left(\norm{T^n-T_h^n}{\Ldrc}
+\norm{A^n-I_hA^n}{\Htu}
\right)\norm{\eA^n}{\Htu}
\end{align*}
where, in the last inequality, we have estimated the first term on the right-hand side by proceeding as in \eqref{eq:As_Al}.
From the definition of $\eT^n$ and the previous equation we get
\begin{equation}
\label{etanh}
\norm{\eA^n}{\Htu}\lesssim \norm{\eT^n}{\Ldrc}+\norm{Q_hT^n-T^n}{\Ldrc}+\norm{A^n-I_h A^n}{\Htu}.
\end{equation}

Let us now estimate the error for the temperature. Notice that $(A^{n+1},T^{n+1})$ satisfies the following equations
\begin{equation*}
\int_{\Ocero}\partial_t T^{n+1} Wr\drdz+
b(T^{n+1},W)
=\frac12 \int_{\Ocero}\omega^2\sigma(T^{n+1})|A^{n+1}|^2Wr\drdz
+\int_{\Gtu}\eta\,T_c^{n+1}\,Wr\dl
\end{equation*}
for all $W\in\HuT$. Moreover, the previous equality can be rewritten as follows:
\begin{equation}
\label{T_tnp1N}
\begin{aligned}
\int_{\Ocero}\diff T^{n+1}\,W\,r\drdz
+b(T^{n+1},W)
&=\frac12\int_{\Ocero}\omega^2\sigma(T^{n})|A^{n}|^2\,W\,r\drdz \\
&\quad +\int_{\Gtu}\eta\,T_c^{n+1}\,W\,r\dl
+\mathcal{R}^n(W),
\end{aligned}
\end{equation}
where
% \begin{multline*}
% \mathcal{R}^n(W)\defeq \int_{\Ocero}\left(\diff T^{n+1}-\partial_t T^{n+1}\right)Wr\drdz
% +\frac12 \int_{\Ocero}\omega^2\sigma(T^n)\left(|A^{n+1}|^2-|A^n|^2\right)Wr\drdz\\
% +\frac12\int_{\Ocero}\omega^2\left(\sigma(T^{n+1})-\sigma(T^n)\right)|A^{n+1}|^2Wr\drdz.
% \end{multline*}
\begin{equation*}
\begin{aligned}
\mathcal{R}^n(W)\defeq\;&
\int_{\Ocero}\left(\diff T^{n+1}-\partial_t T^{n+1}\right)W\,r\drdz \\
&+\frac12\omega^2\int_{\Ocero}\sigma(T^n)\left(|A^{n+1}|^2-|A^n|^2\right)W\,r\drdz \\
&+\frac12\omega^2\int_{\Ocero}\left(\sigma(T^{n+1})-\sigma(T^n)\right)|A^{n+1}|^2W\,r\drdz .
\end{aligned}
\end{equation*}
We estimate $\mathcal{R}^n(W)$ by using \eqref{hyp:sigma}, Cauchy-Schwarz  inequality and by proceeding as in \eqref{eq:T1_T2_A2} and \cref{lemma:emb_cont} to estimate the last term
% \begin{multline*}
% |\mathcal{R}^n(W)|\lesssim
% \left(
% \norm{\diff T^{n+1}-\partial_t T^{n+1}}{\Ldrc}
% +\norm{|A^{n+1}|^2-|A^n|^2}{\Ldrc}\right)\norm{W}{\Ldrc}\\
% +\norm{\sigma(T^{n+1})-\sigma(T^n)}{\Ldrc}\norm{A^{n+1}}{\Huc}^2\norm{W}{\Huc}.
% \end{multline*}
\begin{equation*}
\begin{aligned}
|\mathcal{R}^n(W)|
\lesssim\;&
\Big(
\norm{\diff T^{n+1}-\partial_t T^{n+1}}{\Ldrc}
+\norm{|A^{n+1}|^2-|A^n|^2}{\Ldrc}
\Big)\norm{W}{\Ldrc}\\
&+\norm{\sigma(T^{n+1})-\sigma(T^n)}{\Ldrc}\,
\norm{A^{n+1}}{\Huc}^2\,\norm{W}{\Huc}.
\end{aligned}
\end{equation*}
Next, we bound each term on the right-hand side of the previous equation by using \eqref{hyp:Lsigma}, \eqref{cotaAt}, \cref{lemma:emb_cont} and applying H\"older's inequality and Taylor's formula
\begin{align*}
\norm{\diff T^{n+1}-\partial_t T^{n+1}}{\Ldrc}^2
%&=\Bnorm{-\frac{1}{\Dt}\int_{t_n}^{t_{n+1}}(s-t_n)\partial_{tt} T(s)\ds}{\Ldrc}^2\\
&\leq \Dt\int_{t_n}^{t_{n+1}}\norm{\partial_{tt} T(s)}{\Ldrc}^2\ds
\\
\norm{|A^{n+1}|^2-|A^n|^2}{\Ldrc}^2
&\lesssim \norm{A^{n+1}+A^n}{\L_1^3(\Ocero)}^2\norm{A^{n+1}-A^n}{\L_1^6(\Ocero)}^2
\lesssim \Dt\int_{t_n}^{t_{n+1}}\!\!\!\norm{\partial_t A(s)}{\Huc}^2\ds
\\
%\norm{\left(\sigma(T^{n+1})-\sigma(T^n)\right)|A^{n+1}|^2}{\Ldrc}^2
%&\lesssim \norm{A^{n+1}}{\L_1^3(\Ocero)}^4\norm{T^{n+1}-T^n}{\L_1^6(\Ocero)}^2\\
%&\lesssim \Dt\int_{t_n}^{t_{n+1}}\norm{\partial_t T(s)}{\Huc}^2\ds.
\norm{\sigma(T^{n+1})-\sigma(T^n)}{\Ldrc}^2
&\lesssim
\Dt\int_{t_n}^{t_{n+1}}\norm{\partial_t T(s)}{\Ldrc}^2\ds.
\end{align*}
From the previous equations and Young's inequality,  there exists $\epsilon_1 > 0$ such that
\begin{equation}\label{RW}
|\mathcal{R}^n(W)|\lesssim
\dfrac{\Dt}{\epsilon_1}\int_{t_n}^{t_{n+1}}\left\{
\norm{\partial_{tt} T(s)}{\Ldrc}^2
+\norm{\partial_t A(s)}{\Huc}^2
+\norm{\partial_t T(s)}{\Ldrc}^2
\right\}\ds
+\epsilon_1\norm{W}{\Huc}^2.
\end{equation}

On the other hand, subtracting $\eqref{T_tnp1N}$ from \eqref{eq:fully_Tn}, using the
definitions of $I_h$, $Q_h$, $\eA^n$ and $\eT^{n+1}$ it follows that
% \begin{align*}
% \int_{\Ocero}\diff \eT^{n+1} W_hr\drdz+
% b(\eT^{n+1},W_h)
% =\int_{\Ocero}\diff(T^{n+1}-Q_hT^{n+1}) W_hr\drdz
% +b(T^{n+1}-Q_hT^{n+1},W_h)\\
% +\frac12 \int_{\Ocero}\omega^2\left(\sigma(T_h^n)-\sigma(T^n)\right)|A_h^n|^2W_hr\drdz
% +\frac12 \int_{\Ocero}\omega^2\sigma(T^n)\left(|A_h^n|^2-|A^n|^2\right)W_hr\drdz\\
% +\int_{\Gtu}\eta\,\left(T_c^n-T_c^{n+1}\right)\,W_hr\dl-\mathcal{R}^n(W_h).
% \end{align*}
\begin{align*}
\int_{\Ocero}\!\!\diff \eT^{n+1} W_h\,r\drdz
+b(\eT^{n+1},W_h)
&=\int_{\Ocero}\!\!\diff(T^{n+1}\!-Q_hT^{n+1})\,W_h\,r\drdz
+b(T^{n+1}\!-\!Q_hT^{n+1},W_h)\\
&\quad +\frac12\int_{\Ocero}\omega^2\left(\sigma(T_h^n)-\sigma(T^n)\right)|A_h^n|^2W_h\,r\drdz\\
&\quad +\frac12\int_{\Ocero}\omega^2\sigma(T^n)\left(|A_h^n|^2-|A^n|^2\right)W_h\,r\drdz\\
&\quad +\int_{\Gtu}\eta\,\left(T_c^n-T_c^{n+1}\right)\,W_h\,r\dl
-\mathcal{R}^n(W_h).
\end{align*}
By taking $W_h=\eT^{n+1}$ in the previous equation, using \eqref{hyp:sigma}, \eqref{hyp:Lsigma}, \eqref{eq:est_ATh} and by applying
Cauchy-Schwarz and Young's inequalities we obtain
\begin{align*}
\frac{1}{2\Dt}&\left(\norm{\eT^{n+1}}{\Ldrc}^2-\norm{\eT^n}{\Ldrc}^2\right)
+C_p\min\{\mathring{\kappa},\eta_*\}\norm{\eT^{n+1}}{\Huc}^2
\\
\lesssim& |\mathcal{R}^n(\eT^{n+1})|
+\dfrac{1}{\epsilon_2}\left\{
\norm{\diff(T^{n+1}-Q_hT^{n+1})}{\Ldrc}
+\norm{T_c^n-T_c^{n+1}}{\LdGur}
\right.
\\
&%\hspace*{3cm}
\left.
+\norm{T^{n+1}-Q_hT^{n+1}}{\Huc}
+\norm{T_h^n-T^n}{\Ldrc}
+\norm{A_h^n-A^n}{\Huc}
\right\}^2+\epsilon_2\norm{\eT^{n+1}}{\Huc}^2
\end{align*}
for some $\epsilon_2 >0$. By estimating the second and third term on the right-hand side of the previous equation
in a similar way as was done to obtain \eqref{RW}, from  \eqref{RW}, \eqref{etanh}  and taking $\epsilon_1=\epsilon_2=C_p\min\{\mathring{\kappa},\eta_*\}/4$
it follows that
\begin{align*}
\frac{1}{2\Dt}&\left(\norm{\eT^{n+1}}{\Ldrc}^2-\norm{\eT^n}{\Ldrc}^2\right)
+\dfrac{C_p\min\{\mathring{\kappa},\eta_*\}}{2} \norm{\eT^{n+1}}{\Huc}^2\\
&\lesssim
\norm{\eT^n}{\Ldrc}^2
+\norm{A^n-I_h A^n}{\Htu}^2\\
&\quad +\norm{Q_hT^n-T^n}{\Ldrc}^2
+\frac{1}{\Dt}\int_{t_n}^{t_{n+1}}\norm{(I-Q_h)\partial_t T(s)}{\Ldrc}^2\ds
+\norm{T^{n+1}-Q_hT^{n+1}}{\Huc}^2\\
&\quad +\Dt\int_{t_n}^{t_{n+1}}
\left\{
\norm{\partial_t T_c(s)}{\LdGur}^2
+\norm{\partial_{tt} T(s)}{\Ldrc}^2
+\norm{\partial_t A(s)}{\Huc}^2
+\norm{\partial_t T(s)}{\Ldrc}^2
\right\}\ds.
\end{align*}
In particular, the term $\norm{\eT^n}{\Ldrc}$ is derived from
$\norm{T_h^n-T^n}{\Ldrc}$, $\norm{A_h^n-A^n}{\Huc}$ adding and subtracting $Q_hT^n$, $I_hA^n$  and
using \eqref{etanh}.
Then, multiplying by $2\Delta t$, summing over $n$ and using the discrete Gronwall's inequality, we obtain for all $\ell=1,\dots,N$
\begin{align*}
\norm{\eT^\ell}{\Ldrc}^2&
+\Dt\sum_{n=1}^\ell\norm{\eT^n}{\Huc}^2
\lesssim
\norm{\eT^0}{\Ldrc}^2
+\underset{0\leq n\leq N-1}{\max}\norm{A^n-I_h A^n}{\Htu}^2
\nonumber\\
&+\underset{0\leq n\leq N}{\max}\norm{Q_hT^n-T^n}{\Huc}^2
+\int_{0}^{\Tf}\norm{(Q_h-I)\partial_t T(s)}{\Ldrc}^2\ds \\
&+\Dt^2\int_0^{\Tf}
\left\{
\norm{\partial_t T_c(s)}{\LdGur}^2
+\norm{\partial_{tt} T(s)}{\Ldrc}^2
+\norm{\partial_t A(s)}{\Huc}^2
+\norm{\partial_t T(s)}{\Ldrc}^2
\right\}\ds.
\end{align*}
Notice that $\norm{\eT^0}{\Ldrc}=0$ since we have assumed $T_h^0:=Q_hT^0$.
Thus, the result follows by writing $A_h^n-A^n=\eA^n+I_h A^n-A^n$, $T_h^n-T^n=\eT^n+Q_h T^n-T^n$, from the previous inequality and \eqref{etanh}.
\end{proof}

Next, as a consequence of the previous theorem and  interpolation properties for $I_h$ and
$Q_h$ (cf.~\eqref{est_Ih}--\eqref{est_Qh}), we obtain the following result, which yields the optimal convergence order of the proposed scheme under additional spatial regularity assumptions for both the temperature and the magnetic vector potential.

\begin{theorem}\label{thm:est_2}
Under the hypothesis of the previous theorem, if we further assume $A$ in $\Ccal([0,\Tf];\H^{2}_1(\O)\cap \HtuG)$ and
$T$ in $\Ccal([0,\Tf];\H^{2}_1(\Ocero))\cap\H^1(0,\Tf;\H^{1}_1(\Ocero))$, then
\begin{multline*}
\underset{1\leq n\leq N}{\max}\norm{T_h^n-T^n}{\Ldrc}^2
+\Dt\sum_{n=1}^N\norm{T_h^n-T^n}{\Huc}^2
+\underset{0\leq n\leq N}{\max}\norm{A_h^n-A^n}{\Htu}^2\\
\lesssim
h^2\Big(\norm{A}{\Ccal([0,\Tf];\H^{2}_1(\O)\cap \Htu)}^2
+\norm{T}{\Ccal([0,\Tf];\H^{2}_1(\Ocero))}^2
+\norm{\partial_t T}{\L^2(0,\Tf;\H^{1}_1(\Ocero))}^2
\Big)
\\
+\Dt^2\Big(\norm{\partial_t T_c}{\L^2(0,\Tf;\LdGur)}^2
+\norm{ T}{\H^2(0,\Tf;\Ldrc)}^2
+\norm{\partial_t A}{\L^2(0,\Tf;\Huc)}^2
\Big).
\end{multline*}
\end{theorem}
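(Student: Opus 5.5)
The estimate follows by inserting the interpolation bounds \eqref{est_Ih}--\eqref{est_Qh} with $s=2$ into each projection term on the right-hand side of the previous theorem; the $\Dt^2$ terms are carried over unchanged. The hypotheses of the previous theorem remain in force, so its conclusion holds. We bound the three spatial contributions separately.

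\textbf{The magnetic vector potential.} Since $A\in\Ccal([0,\Tf];\H^{2}_1(\O)\cap\HtuG)$, for every $n$ the function $A^n=A(t_n)$ lies in $\H^2_1(\O)\cap\HtuG$. Applying \eqref{est_Ih} with $s=2$ gives
\[
\norm{A^n-I_hA^n}{\Htu}\lesssim h\norm{A^n}{\H^2_1(\O)\cap\Htu}\le h\norm{A}{\Ccal([0,\Tf];\H^{2}_1(\O)\cap \Htu)} .
\]
Taking the maximum over $n$ and squaring bounds the first term by $h^2\norm{A}{\Ccal([0,\Tf];\H^{2}_1(\O)\cap \Htu)}^2$.

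\textbf{The temperature.} In the same way, \eqref{est_Qh} with $s=2$ bounds the $\Huc$-part by
\[
\max_n\norm{T^n-Q_hT^n}{\Huc}^2\lesssim h^2\norm{T}{\Ccal([0,\Tf];\H^{2}_1(\Ocero))}^2 .
\]

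\textbf{The time-derivative term.} Here we only have $\partial_t T\in\L^2(0,\Tf;\H^1_1(\Ocero))$. We therefore apply the $\Ldrc$-part of \eqref{est_Qh} with $s=1$, that is, $h^{-1}\norm{W-Q_hW}{\Ldrc}\lesssim\norm{W}{\H^1_1(\Ocero)}$. This yields $\norm{(I-Q_h)\partial_tT(s)}{\Ldrc}\lesssim h\norm{\partial_tT(s)}{\H^1_1(\Ocero)}$ for a.e. $s$. Squaring and integrating over $(0,\Tf)$ bounds the third term by $h^2\norm{\partial_t T}{\L^2(0,\Tf;\H^{1}_1(\Ocero))}^2$.

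One point needs checking: $Q_h$ must act pointwise in time and commute with $\partial_t$, so that the quantity $(I-Q_h)\partial_tT$ from the previous proof makes sense. This holds because $Q_h$ is a linear, time-independent, bounded operator on $\H^1_1(\Ocero)$, so $\partial_t(Q_hT)=Q_h\partial_tT$ in $\L^2(0,\Tf;\Ldrc)$.

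Summing the three bounds together with the unchanged $\Dt^2$ terms gives the estimate stated in \cref{thm:est_2}. There is no substantial obstacle: the result is a direct substitution, and the only care needed is the choice $s=1$ versus $s=2$ in \eqref{est_Qh} to match the assumed regularity of $\partial_tT$.
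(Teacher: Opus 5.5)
Your proposal is correct and follows the paper's route: the paper obtains this theorem directly by inserting the interpolation estimates \eqref{est_Ih}--\eqref{est_Qh} into the bound of the previous theorem. You use $s=2$ for the $A^n$ and $T^n$ terms, $s=1$ (the $\Ldrc$ part of \eqref{est_Qh}) for $(I-Q_h)\partial_t T$, and the commutation $\partial_t(Q_hT)=Q_h\partial_t T$, which spells out details the paper leaves implicit.
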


The solution $(A^n_h, T^n_h)$ of \cref{pbm:AT_fully} allows us to compute
an approximation to the magnetic induction $\bB$  defined by
\[
\bB^n_h\defeq
-\frac{\partial A^n_h}{\partial z}\be_r
+\frac{1}{r}\frac{\partial(r A^n_h)}{\partial r}\be_z, \quad n=0,\dots,N.
\]

\begin{corollary} Under the same assumptions as in Theorem~\ref{thm:est_2} it follows that
\begin{multline*}
\underset{1\leq n\leq N}{\max}
\norm{\bB^n_h-\bB^n}{\L^2(\Ot)^3}^2
\lesssim
h^2\Big(\norm{A}{\Ccal([0,\Tf];\H^{2}_1(\O)\cap \Htu)}^2
+\norm{T}{\Ccal([0,\Tf];\H^{2}_1(\Ocero))}^2
+\norm{\partial_t T}{\L^2(0,\Tf;\H^{1}_1(\Ocero))}^2
\Big)
\\
+\Dt^2\left(
\norm{\partial_t T_c}{\L^2(0,\Tf;\LdGur)}^2
+\norm{T}{\H^2(0,\Tf;\Ldrc)}^2
+\norm{\partial_t A}{\L^2(0,\Tf;\Huc)}^2
\right).
\end{multline*}
where $\bB^n\defeq
-\frac{\partial A^n}{\partial z}\be_r
+\frac{1}{r}\frac{\partial(r A^n)}{\partial r}\be_z $, $n=0,\dots,N$.
\end{corollary}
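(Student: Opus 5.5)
The plan is to reduce the estimate for $\bB^n_h-\bB^n$ to the estimate for $\norm{A_h^n-A^n}{\Htu}$ that is already controlled by Theorem~\ref{thm:est_2}. The first step is to write the three-dimensional $\L^2(\Ot)^3$ norm in cylindrical coordinates. All fields are independent of $\theta$, so the volume element is $r\,dr\,d\theta\,dz$, and for any $G\in\Htu$ we get
\[
\Big\|-\frac{\partial G}{\partial z}\be_r+\frac{1}{r}\frac{\partial(rG)}{\partial r}\be_z\Big\|_{\L^2(\Ot)^3}^2
=2\pi\Big(\norm{\partial_z G}{\Ldr}^2+\norm{\partial_r(rG)}{\Ldur}^2\Big).
\]
I apply this with $G=A_h^n-A^n$. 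This is legitimate because $A_h^n\in\Y_h\subset\HtuG$ and $A^n\in\HtuG$, using the assumed continuity $A\in\Ccal([0,\Tf];\HtuG)$ so that $A^n$ is well defined. Linearity of the map $G\mapsto\bB$ gives $\bB^n_h-\bB^n$ exactly as the field associated with $A_h^n-A^n$.

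The second step bounds the right-hand side by the $\Htu$ norm. I use the upper inequality of \cref{lemma:Htu}:
\[
\norm{\partial_r(rG)}{\Ldur}^2\leq 2\norm{\partial_rG}{\Ldr}^2+2\norm{G}{\Ldur}^2 .
\]
Together with $\norm{\partial_zG}{\Ldr}^2\leq\norm{G}{\HurV}^2$ (with $\vartheta=\O$), this yields $\norm{\bB^n_h-\bB^n}{\L^2(\Ot)^3}^2\leq 4\pi\norm{A_h^n-A^n}{\Htu}^2$ for every $n$. This is the same computation already used in the Lipschitz part of \cref{lemma:Acal}.

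The third step takes the maximum over $1\le n\le N$, which is bounded by the maximum over $0\le n\le N$. That quantity appears on the left-hand side of Theorem~\ref{thm:est_2}, so its right-hand side bounds it directly, and the stated estimate follows. There is no real obstacle here. The only point needing care is the first step: the identification of $\L^2(\Ot)^3$ with the weighted $r$-norms, and the resulting factor $2\pi$ that gets absorbed into $\lesssim$. One must also check that $\frac1r\partial_r(rG)$ makes sense in $\Ldr$ for $G\in\Htu$, which is precisely what \cref{lemma:Htu} guarantees.
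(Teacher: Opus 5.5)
Your proposal is correct and is the argument the paper leaves implicit (the corollary is stated without a written proof). You use the identity $\norm{\bB_h^n-\bB^n}{\L^2(\Ot)^3}^2=2\pi\big(\norm{\partial_z(A_h^n-A^n)}{\Ldr}^2+\norm{\partial_r(r(A_h^n-A^n))}{\Ldur}^2\big)$, then the upper bound of \cref{lemma:Htu} (as in \cref{lemma:Acal}) to get $\norm{\bB_h^n-\bB^n}{\L^2(\Ot)^3}^2\le 4\pi\norm{A_h^n-A^n}{\Htu}^2$, and finally Theorem~\ref{thm:est_2}; the constants are right.
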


%-------------------------------------------------------------------------------------
\section{Numerical tests}
\label{sec:num}
In this section, we  present some numerical results obtained by solving \cref{pbm:AT_fully} described in the previous section. First, we conduct a test to assess the error estimates.
%In the absence of an analytical solution, we evaluate the orders of convergence
%by comparing the results with those obtained by the same method on highly refined meshes.
% Subsequently, we employ the proposed scheme to address an industrial challenge entailing
% the numerical simulation of an induction hardening process.
We then apply the proposed scheme to the numerical simulation of an industrial induction-hardening process.

\subsection{Convergence analysis}
For this test, we consider a conducting cylinder surrounded by a copper induction coil modeled by four individual rings. The geometric configuration is the same as that used in Test 6.1 of~\cite{GLSV1}, but all coordinates are multiplied by
100 in order to obtain a setting where both spatial and temporal errors can be observed.
% but with all coordinates multiplied by 100 in order to create an example in which the spatial and temporal variations allow both types of errors to be captured.
The electrical frequency is $50$ Hz, as in Test 6.1 of~\cite{GLSV1}, while the voltage sources are multiplied by a factor of $10^4$ to simulate a short time interval. The final time is $0.5$ seconds, and the initial temperature is $20\ ^\circ\mathrm{C}$.

Regarding the thermal properties, the density, specific heat and thermal conductivity are taken as $\rho = 7500\, \mathrm{kg\,m^{-3}}$,  $c_P = 510\, \mathrm{J\,K^{-1}\,kg^{-1}}$, and $\kappa=15\, \mathrm{W\,m^{-1}\,K^{-1}}$, respectively. On the  thermal boundary, we use a heat transfer coefficient a heat transfer coefficient $\eta = 5\ \mathrm{W\,m^{-2}\,K^{-1}}$ and a convection temperature  $T_c = 20\ ^\circ\text{C}$. For the electrical properties, the magnetic permeability is taken equal to the vacuum permeability for all materials, $\mu_0 = 4\pi \times 10^{-7}\ \mathrm{H\,m^{-1}}$. The electrical conductivity in the coil is constant and equal to $2\times10^{7}\,\mathrm{S\,m^{-1}}$, while in the workpiece it depends on temperature as in~\cite{GLSV1}, but multiplied by $10^4$ so that the skin effect is less pronounced:
$$\sigma(T) = 1000 \times \left(-4.51\times10^{-13}T^2 + 8.18\times10^{-10}T + 7.26\times10^{-7}\right)\ \text{S/m}.$$
We focus on the errors for the temperature field. In the absence of an analytical solution, the reference solution $T^{\text{ref}}$
was obtained using the same finite-element method on a highly refined mesh and with a very small time-step.
The numerical solution of \cref{pbm:AT_fully} is then computed on a sequence of progressively refined meshes derived from a coarse one, while the time step is successively reduced from $\Delta t = 0.25$ s.

Figure~\ref{fig:domain_errors} shows log--log plots of the error in $T$ measured in the discrete norm $\L^2(0,\Tf;\Huc)$ versus the number of degrees of freedom (d.o.f.) and versus $\Delta t$ (left and right, respectively). To study the error with respect to the number of degrees of freedom, we set the time step to a sufficiently small value so that the error depends almost exclusively on the mesh size $h$. In this case, a linear dependence on $h$ is observed. Similarly, to study the error with respect to the time step, the mesh is chosen sufficiently fine so that the time discretization error dominates. A linear dependence on  $\Delta t$ is again observed.
%{\bf Hablé con Pablo y cree que es mejor dejar solo las graficas porque se repite información de la tabla. El proponía la tabla por si ocupaba menos.Ahora mismo dejé las dos cosas, pero el cree que solo una mejor }

%-----------------------
%\begin{table}[htbp]
%\centering
%\caption{%
%Convergence for $\|T_h - %T^{\text{ref}}\|_{L^2(0,\mathcal{T};\,H^1(\Omega_0))}$.
%}
%\label{tab:convergence_time2}
%\renewcommand{\arraystretch}{1.3}
%\setlength{\tabcolsep}{4pt}
%\begin{tabular}{r r r r}
%\toprule
%$\Delta t$ & $N_{\mathrm{d.o.f.}}$ & $\|T_h - T\|$ & rate \\
%\midrule
%\multirow{4}{*}{$4.88\times 10^{-4}$}
%& $788$   & $6,67\times10^{-1}$ & ---  \\
%& $3087$  & $3,21\times10^{-1}$ & $1.07$ \\
%& $6898$  & $2,15\times10^{-1}$ & $1.08$ \\
%& $12221$ & $1,67\times10^{-1}$ & $0.88$ \\

%\bottomrule
%\end{tabular}
%\begin{tabular}{r r r r}
%\toprule
%$N_{\mathrm{d.o.f.}}$ & $\Delta t$ & $\|T_h - T\|$ & rate \\
%\midrule
%\multirow{4}{*}{$109237$}
%& $0.25$ & $1.17$ & ---  \\
%& $0.125$ & $5.81$ & $1.00$ \\
%& $0.0625$ & $2.88$ & $1.01$ \\
%& $0.03125$ & $1.42$ & $1.02$ \\
%\bottomrule
%\end{tabular}
%\end{table}

\begin{figure}[!h]
\centering
\includegraphics*[width=0.45\textwidth]{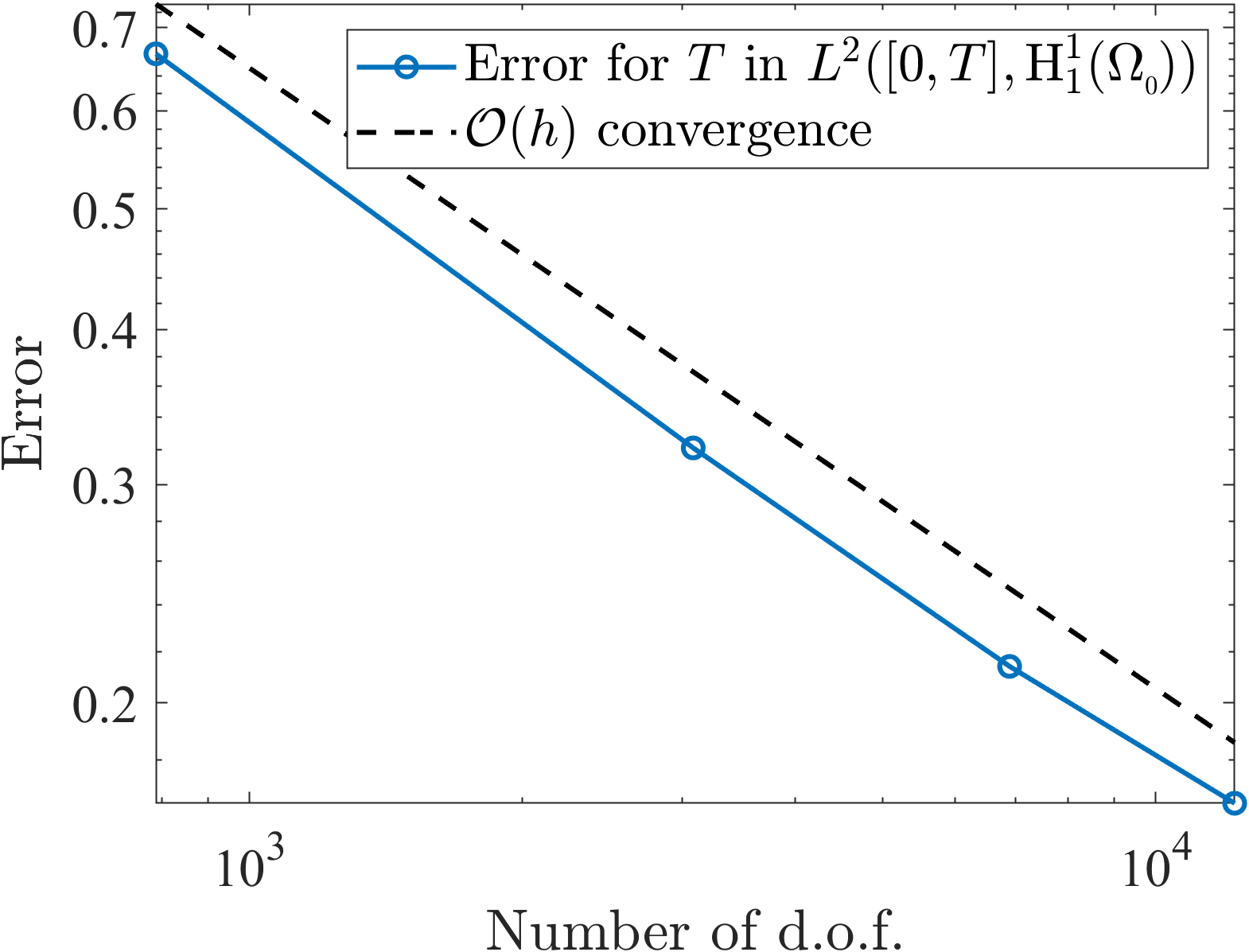}
\hspace{5mm}
\includegraphics*[width=0.46\textwidth]{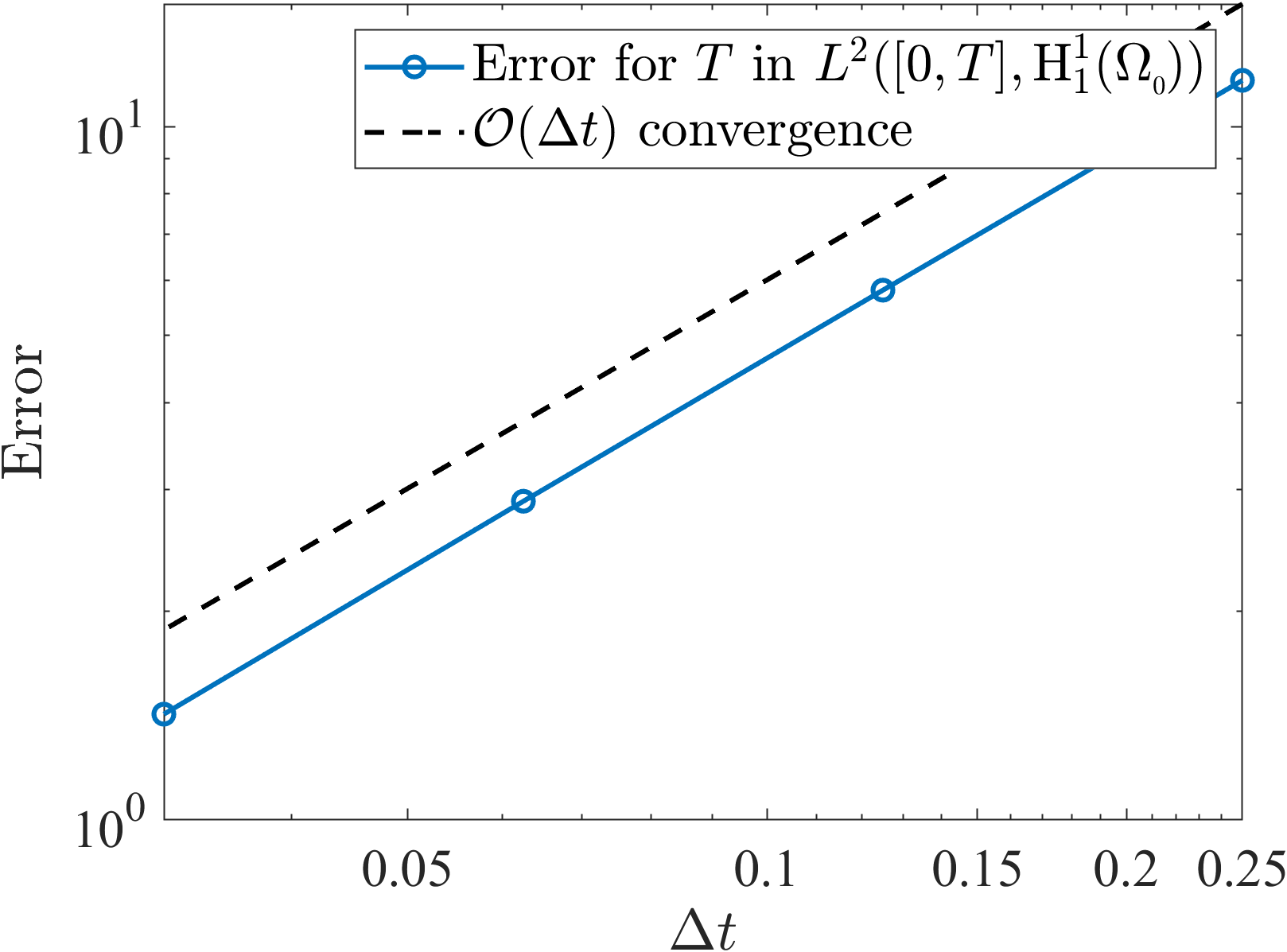}
\caption{Errors in log-log scale for $T$ versus the number of degrees of freedom (left) and versus $\Delta t$ (right).}
\label{fig:domain_errors}
\end{figure}
%-------------------------------------------------------------------------------------
\subsection{An industrial application of the induction heating model}
%\caqm{Induction hardening process: This specific application utilizes the rapid and localized heating provided by induction heating to alter the microstructure of metals, enhancing their surface hardness and mechanical properties.
% \caqm{In this section we have performed the numerical simulation of an induction hardening process typical
% in the automotive industry. Induction hardening is a surface treatment process that uses the rapid and localized heating provided by induction heating to change the microstructure of metals, improving their surface hardness and mechanical properties. In this process, the workpiece is rapidly heated by induced eddy currents in the surface layer, followed by controlled quenching. This rapid thermal cycle results in the formation of martensite, a hard and wear-resistant phase, in the surface region of the material. By fine-tuning the frequency, power and duration of the heating cycle, induction hardening can be tailored to achieve specific hardness profiles and depths. The frequency of the AC current plays a crucial role in determining the depth of penetration of the eddy currents into the workpiece. This phenomenon, known as the skin effect, causes the eddy currents to concentrate near the surface of the conductor. The higher the frequency of the AC current, the lower the depth of penetration of the induced currents.
% This technique is widely used in industries such as automotive and aerospace for components requiring increased surface durability, such as shafts, gears, axles and bearings.
% }
In the automotive industry, induction hardening is widely used as a surface treatment for carbon-steel components. The process consists of rapidly and locally heating a thin surface layer by electromagnetic induction in order to modify the microstructure and, after  cooling, improve hardness and wear resistance. Here, we focus on the induction heating stage and present numerical simulations for a typical axle stub. The analysis concerns the temperature field generated by eddy currents in the near-surface region, which depends mainly on the inductor geometry, the excitation frequency, the coil current, and the heating time. In particular, the AC frequency determines the penetration depth of the induced currents through the skin effect. In this example, the operating frequency is $f \!=  \!6000$ Hz and the coil current is $I \!= \! 4000$ A rms; a representative coil voltage is 33.1 V rms with phase 0.46 rad. The inductor is made of copper and the workpiece of ferromagnetic steel. %(see~\cite{Batti2009}).
Its magnetic behavior is described by a temperature-dependent Fröhlich-Kennelly law (see \cite{Petzold2014})
%\vspace{-2mm}
\begin{equation}
\bB=\mu_0\,\bH\;+\; f(\hat{T})\,\frac{\bH}{a+b|\bH|},\quad f(\hat{T})=\left(\frac{T_c^2-\hat{T}^2}{T_c^2-T_0^2}\right)^{\!1/4},
\label{eq:BH_FK}
\end{equation}
with parameters $a,b>0$. From \eqref{HnuB} and \eqref{eq:BH_FK} we obtain
\[
\nu(\hat T,|\bB|)=\frac{-a\mu_0-f(\hat T)+b|\bB|+\sqrt{D}}{2b\mu_0|\bB|},
\quad
D=(a\mu_0+f(\hat T)-b|\bB|)^2+4ab\mu_0|\bB|.
\]
The parameters are chosen as $a=1128.27$, $b=0.64$, $T_c=750^\circ\mathrm{C}$ (Curie temperature),  $T_0=20^\circ\mathrm{C}$, and  $\hat{T}=400^\circ\mathrm{C}$, with $\mu_0=4\pi\times 10^{-7}\,\mathrm{H\,m^{-1}}$. Figure~\ref{fig:steel_properties} shows the resulting B-H curve together with the conductivity of the steel as a function of temperature. The thermal conductivity  and volumetric heat capacity are taken as $\kappa=45\, \mathrm{W\,m^{-1}\,K^{-1}}$, $\rho\, c_P= 3925000\, \mathrm{J\,m^{-3}\,K^{-1}}$.

Although the workpiece exhibits a nonlinear B-H relation, the electromagnetic problem is treated under the time-harmonic assumption. This is justified by the different time scales of the electromagnetic and thermal phenomena, which makes such an approximation appropriate in the present setting.

\begin{figure}[!h]
\centering
\includegraphics*[scale=0.43]{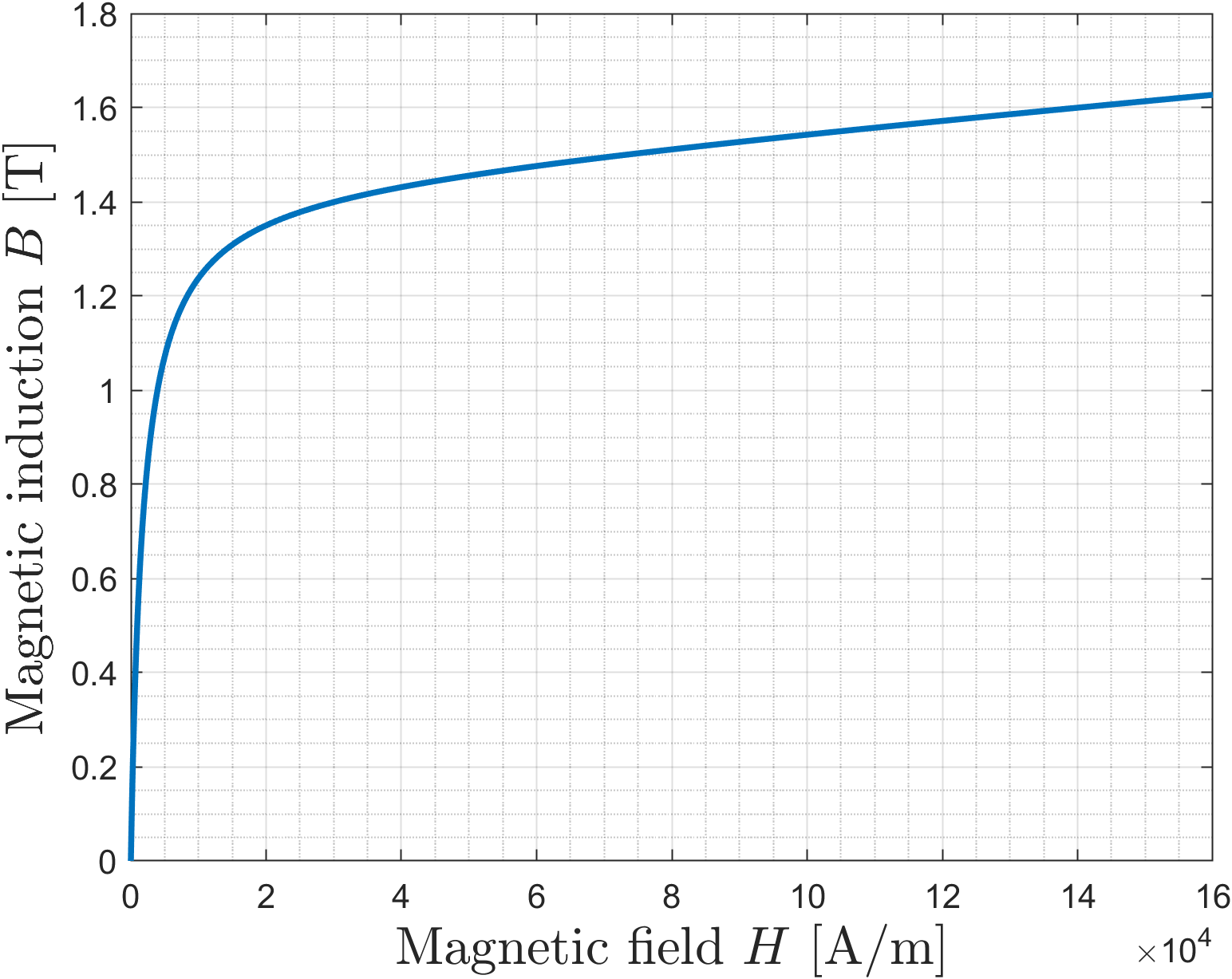} \hspace{5mm}
\includegraphics*[scale=0.43]{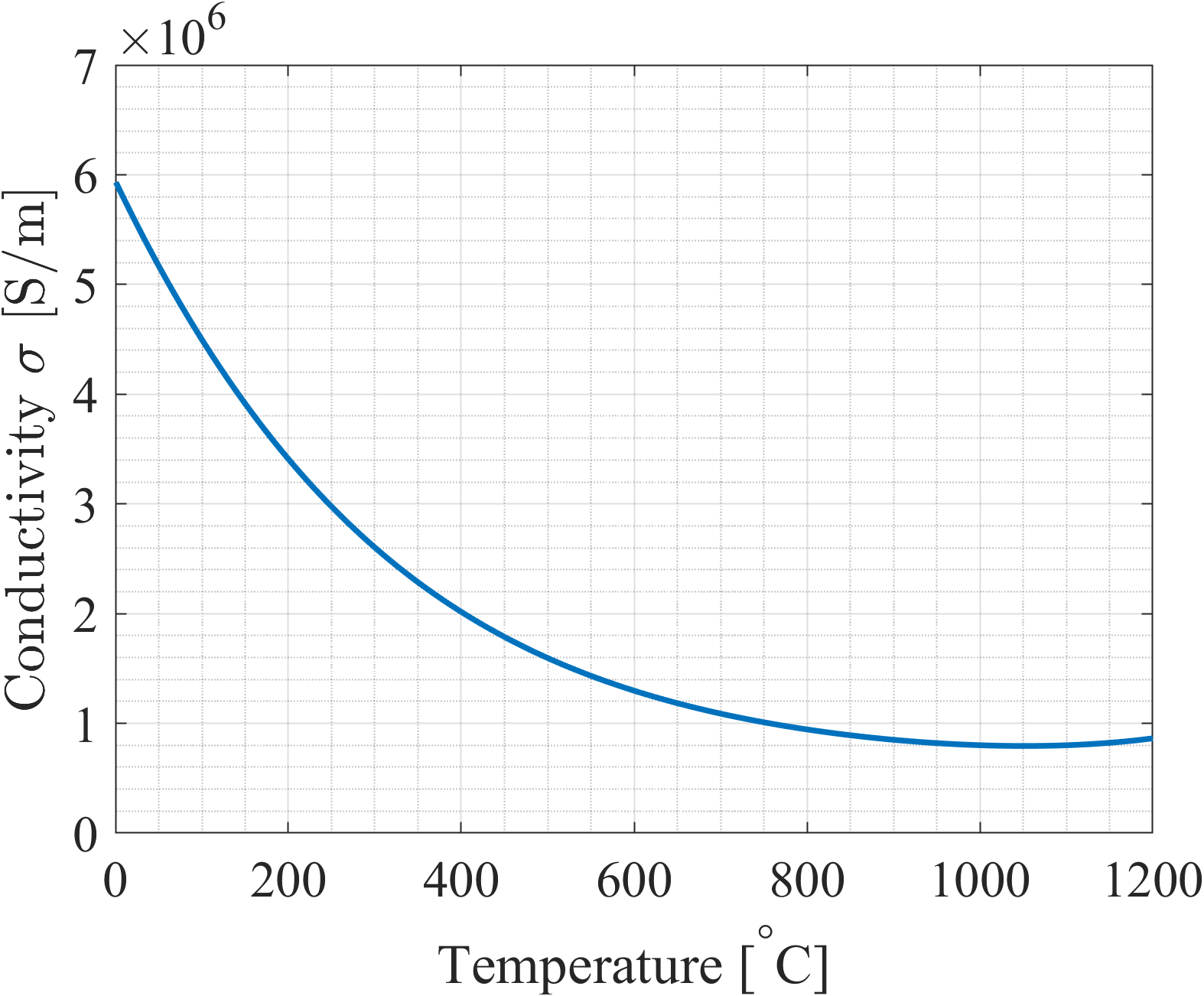}
\caption{Steel properties: B-H curve (left) and $\sigma(T)$  (right).}
\label{fig:steel_properties}
\end{figure}

Figure~\ref{fig:temp_evolution} displays the temperature evolution in the workpiece at several time instants in [0,2] s. The results show a progressive increase of the temperature, essentially localized near the surface region closest to the coil,  while the interior remains comparatively cold, in agreement with the expected behavior.
\begin{figure}[!h]
\centering
 \includegraphics*[scale=0.45]{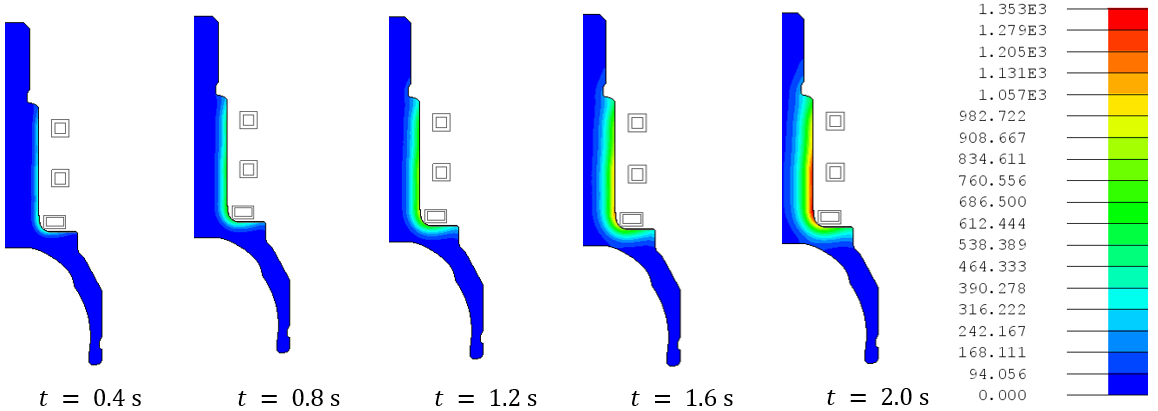}
\caption{Evolution of the temperature ($^\circ\mathrm{C}$) in the workpiece.}
% \cmag{Figura superior: con curva BH dependiendo de la temperatura y radiación. Figura inferior: con curva BH de la Fig. 3 y sin radiación}
\label{fig:temp_evolution}
\end{figure}

%-------------------------------------------------------------------------------------
\section*{Acknowledgments}
%-------------------------------------------------------------------------------------
This work was supported by FEDER, Ministerio de Econom\'ia, Industria y Competitividad-AEI
research project PID2021-122625OB-I00, by Xunta de Galicia (Spain) research project ED431C 2025/09. P.~Venegas  was partially supported by  ANID-Chile through FONDECYT grant 1211030 and Centro de Modelamiento
Matemático (CMM), grant FB210005, BASAL funds for centers of excellence.
B.~L\'opez-Rodr\'iguez was partially supported by Universidad Nacional de Colombia through
Hermes project $63726$.
%-------------------------------------------------------------------------------------

\bibliographystyle{plain}
\bibliography{GLSV2_ref_NO_URL}
\end{document}